\numberwithin{equation}{section}
\DeclareFontFamily{U}{mathx}{\hyphenchar\font45}
\DeclareFontShape{U}{mathx}{m}{n}{
      <5> <6> <7> <8> <9> <10>
      <10.95> <12> <14.4> <17.28> <20.74> <24.88>
      mathx10
      }{}
\DeclareSymbolFont{mathx}{U}{mathx}{m}{n}
\DeclareMathSymbol{\bigtimes}{1}{mathx}{"91}
\newtheorem{theorem}{Theorem}[section]
\newtheorem{corollary}[theorem]{Corollary}
\newtheorem{proposition}[theorem]{Proposition}
\theoremstyle{definition}
\newtheorem{definition}[theorem]{Definition}
\newtheorem{remark}[theorem]{Remark}
\newcommand{\R}{{\mathbb R}}
\newcommand{\p}{{p}}
\newcommand{\Z}{{\mathbb Z}}
\newcommand{\sbf}{{s}}
\newcommand{\bbf}{{b}}
\newcommand{\ds}{{\displaystyle}}
\date{\vspace{-9ex}}
\title{Continuum directed random polymers on disordered  hierarchical diamond lattices     }
\date{  }
  \author{ \textbf{Jeremy Thane Clark}\footnote{ {\tt
jeremy@olemiss.edu}} \vspace{.1cm}  \\  University of Mississippi, Department of Mathematics   }
\providecommand{\keywords}[1]{\textbf{\textit{Keywords:}} #1}
\begin{document}
\maketitle

\begin{abstract}
I discuss  models for a continuum  directed random polymer in a disordered environment in which the polymer lives on a fractal called the  \textit{diamond hierarchical lattice},  a  self-similar metric space forming a network of interweaving pathways.  This fractal depends on a branching parameter $b\in \mathbb{N}$ and a segmenting number $s\in \mathbb{N}$. For $s>b$ my focus is on random measures on the set of directed paths that can be formulated as a subcritical Gaussian multiplicative chaos.  This path measure is analogous to the continuum directed random polymer introduced  by Alberts,  Khanin,  Quastel [Journal of Statistical Physics \textbf{154}, 305-326 (2014)].

\end{abstract}

\keywords{Gaussian multiplicative chaos, diamond hierarchical  lattice,
random branching graphs}

\section{Introduction}

Alberts, Khanin, and Quastel~\cite{alberts,alberts2} introduced a  \textit{continuum directed random polymer} (CDRP) model for a one-dimensional Wiener motion (the polymer) over a time interval $[0,1]$ whose law is randomly transformed through a field of impurities spread throughout the medium of the polymer.  The polymer's disordered environment  is  generated by  a time-space Gaussian white noise $\{\mathbf{W}(x)\}_{x\in D}$ where $ D:=[0,1]\times \R$ (in other terms, $\mathbf{W}$ is a  $\delta$-correlated Gaussian field). For  an inverse temperature parameter $\beta>0$, the CDRP is a random probability measure $Q_\beta^{\mathbf{W}}$ on the set of trajectories  $\Gamma :=C([0,1])$ that is formally expressed as
\begin{align}\label{Alberts}
Q^{\mathbf{W}}(dp)\,=\,  \frac{1}{ M(\Gamma )}  M(dp) \hspace{1cm} \text{for}\hspace{1cm}  M(dp )\,=\, e^{\beta\mathbf{W}_p  -\frac{\beta^2}{2}\mathbb{E}[\mathbf{W}_p ^2   ] }\mu(dp)\,,
\end{align}
where $\mu$ refers to the standard Wiener measure on $\Gamma$ and $\{\mathbf{W}_p\}_{p\in \Gamma}$ is a Gaussian field formally defined by integrating the white noise over a Brownian trajectory:  $\mathbf{W}_p=\int_0^1\mathbf{W}\big(r, p(r)  \big)dr$.  The random measure $M\equiv M(\mathbf{W})$ is a function of the field such that $\mathbb{E}[ M ]=\mu$ and yet $M$ is a.s.\ singular with respect to $\mu$.

 The rigorous mathematical   meaning of the random measure  $M$            in~(\ref{Alberts}) requires special consideration since exponentials of the field $\mathbf{W}_p $ do not have an immediately clear meaning, and, indeed, if the measure $M$ is singular with respect to $\mu$ the expression $\exp\{\beta\mathbf{W}_p  - (\beta^2 /2)\mathbb{E}[\mathbf{W}_p ^2   ] \}$ cannot define a Radon-Nikodym derivative $d M /d\mu $ anyway.  The construction approach of $M$ in~\cite{alberts2} involves an analysis of the finite-dimensional distributions through Wiener chaos expansions. Another point-of-view is that the  random measure $M$ has the form of a \textit{Gaussian multiplicative chaos} (GMC) measure over the Gaussian field $\{\mathbf{W}_p\}_{p\in \Gamma}$.  GMC theory began with an article by Kahane~\cite{Kahane} and much of the  progress on this topic has been motivated by the demands of quantum gravity theory~\cite{Barral,Duplantier,Duplantier2, Rhodes} although GMC theory arises in many other fields, including random matrix theory~\cite{Webb} and  number theory~\cite{Saksman}.  A GMC is classified as \textit{subcritical} or \textit{critical}, respectively, depending on whether  the expectation measure, $\mathbb{E}[M]$, is $\sigma$-finite or not. Because of its relevance to quantum gravity, the relatively unwieldy case of critical GMC has attracted the most attention, with recent results in~\cite{Beretycki,Junnila,Lacoin2}.   The random measure  $M$ in~(\ref{Alberts}) is subcritical since $\mathbb{E}[M  ]=\mu$ is a probability measure.  Shamov~\cite{Shamov}  has formulated subcritical GMC measure theory in a  particularly complete and accessible way.


In this article, I will study a GMC measure analogous to~(\ref{Alberts}) for a CDRP living on a fractal $D^{b,s}$ referred to as the \textit{diamond hierarchical lattice}. Given a branching parameter  $b\in \{2,3,\ldots\}$   and a segmenting parameter   $s\in \{2,3,\ldots\}$, the diamond hierarchical lattice is a compact metric space that embeds $bs$ shrunken copies of itself, which are arranged through $b$ branches that  each have $s$ copies running in series; see the construction outline  below in (A) and (B) of Section~\ref{SecOverview}.   Diamond hierarchical lattices provide a useful setting for formulating toy statistical mechanical models; for example, \cite{GLT,Goldstein,Griffiths, Hambly,HamblyII,lacoin3,Ruiz}.  Lacoin and Moreno~\cite{Lacoin} studied (discrete) directed polymers on disordered diamond hierarchical lattices, classifying the disorder behavior based on the cases $b<s$, $b=s$, and $b>s$, which are   combinatorially analogous, respectively, to the $d=1$, $d=2$, and $d>3$ cases of directed polymers on the $(1+d)$-rectangular lattice.    In~\cite{US},  we considered a functional limit theorem for the partition function of the $b<s$ diamond lattice polymer in a  scaling limit in which the temperature grows as a power law of the length of the polymer. This limit result is analogous to the intermediate disorder regime in~\cite{alberts} for   directed polymers on the $(1+1)$-rectangular lattice.  My focus here will be on developing the theory for a CDRP corresponding to the limiting partition function obtained in~\cite{US}. 

A similar  CDRP model on the $b=s$ diamond lattice, if it exists, will require a different approach for its construction; see~\cite{clark} for computations relevant to the continuum limit of discrete polymers.  It is interesting to compare this question with results~\cite{Carav1,Carav2}  by Caravenna, Sun, and Zygouras on scaling limits of the partition function for $(1+2)$-rectangular lattice polymers.

\subsection{Overview of the continuum directed random polymer on the diamond lattice }\label{SecOverview}

In this section I will sketch the construction of the continuum directed random polymer on the diamond hierarchical lattice and explore some of its properties.  I discuss the  diamond lattice fractal and its relevant substructures in more detail  in Section~\ref{SecGraphDef}.  Proofs of propositions are placed in Section~\ref{SecProofs}.

\vspace{.3cm}

\noindent \textbf{(A). The sequence of diamond hierarchical graphs}\vspace{.3cm}\\
For a branching number $b\in \{2,3,4,\ldots\}$ and a segmenting number $s\in \{2,3,4,\ldots\}$, the first diamond graph $D_{1}^{b,s}$ is defined by $b$ parallel branches connecting two root nodes $A$ and $B$ wherein each branch is formed by $s$  bonds running in series.  The  graphs  $D_{n+1}^{b,s}$, $n\geq 1$ are then constructed inductively by replacing each bond on $D_{1}^{b,s}$ by a nested copy of $D_{n}^{b,s}$; see the illustration below of the  $(b,s)=(2,3)$ case. 
\begin{center}
\includegraphics[scale=.6]{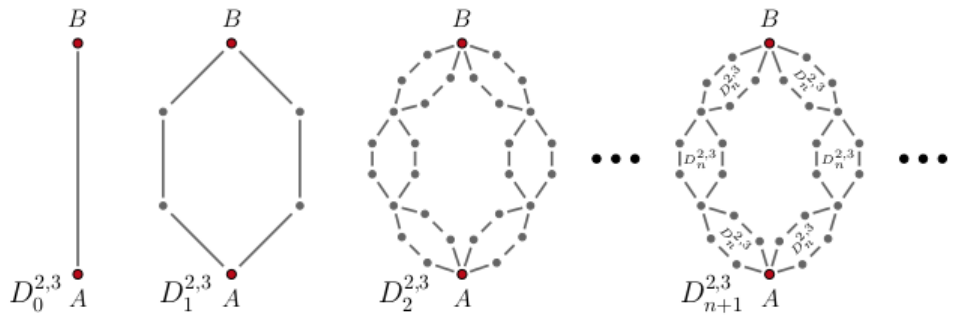}\\
\small  $D_{n+1}^{2,3}$ is defined through $6=2\cdot 3$ copies of $D_{n}^{2,3}$ that are connected in the formation of $D_1^{2,3}$.  
\end{center}

The set of bonds (edges) on the graph $D_{n}^{b,s}$ is denoted by $E_{n}^{b,s}$. A \textit{directed path} is a one-to-one function $\mathbf{p}: \{1,\ldots, s^n\}\rightarrow E_n^{b,s}$ such that  the bonds $\mathbf{p}(j)$, $\mathbf{p}(j+1)$ are adjacent for $1\leq j\leq s^n-1$   and the bonds $\textbf{p}(1)$ and $\textbf{p}(s^n)$ connect to $A$ and $B$, respectively. 
  I will use the following notations:
\begin{align*}
\vspace{.3cm}&V^{b,s}_n  \hspace{1.8cm}  \text{Set of vertex points on $D^{b,s}_n$}& \\
&E_{n}^{b,s}  \hspace{1.8cm}  \text{Set of bonds on the graph $D_{n}^{b,s}$}&     \\
&\Gamma_n^{b,s}  \hspace{1.85cm}  \text{Set of directed paths on $D_{n}^{b,s}$}&\\  
&[\textbf{p}]_n  \hspace{1.81cm}  \text{The path in $\Gamma_n^{b,s}$ determined by $\textbf{p}\in \Gamma^{b,s}_N $ for $N>n$}&
\end{align*}
\begin{remark}\label{RemarkHierarchy} The hierarchical structure of the sequence of diamond graphs implies that  $V^{b,s}_n$ is canonically embedded in  $V^{b,s}_N$  for $N>n$.   The vertices  in $V^{b,s}_n\backslash V^{b,s}_{n-1}$ are referred to as the $n^{th}$ \textit{generation} vertices. In the same vein, $E^{b,s}_n$ and $\Gamma_n^{b,s}$ define equivalence relations on $E^{b,s}_N$ and $\Gamma_N^{b,s}$, respectively.   For instance, $p,q\in \Gamma^{b,s}_N$ are equivalent up to generation $n$ if $[p]_n=[q]_n$. 

\end{remark}

\noindent \textbf{(B). The diamond hierarchical lattice}\vspace{.2cm}\\
Intuitively, the diamond hierarchical lattice, $D^{b,s}$, is a fractal  that  emerges as the ``limit" of the diamond graphs, $D_{n}^{b,s}$, as $n\rightarrow \infty$.  My convention is to view $D^{b,s}$ as a metric space embedding a family of interweaving copies of the interval $[0,1]$ for which the endpoints $0$ and $1$ are identified with the root vertices $A$ and $B$, respectively. In this framework directed paths are isometric maps $p:[0,1]\rightarrow D^{b,s}$ with $p(0)=A$ and $p(1)=B$.  In    Section~\ref{SecGraphDef} and Appendix~\ref{AppendCG}, I will give precise definitions for  $D^{b,s}$ and for the sets  $V^{b,s}$, $E^{b,s}$, $\Gamma^{b,s}$ and the measures  $\mu$, $\nu$  described in the notation list below. 
\begin{align*}
&V^{b,s}  \hspace{1.75cm}  \text{Set of vertex points on $D^{b,s}$}& \\
&E^{b,s}  \hspace{1.73cm}  \text{Complement of $V^{b,s}$ in   $D^{b,s} $  }& \\
&\Gamma^{b,s}  \hspace{1.8cm}  \text{Set of directed paths on $D^{b,s}$}& \\
&D^{b,s}_{i,j}\hspace{1.7cm}\text{First generation embedded copies of $D^{b,s}$ on the $j^{th}$ segment of the $i^{th}$ branch}& \\
&\nu     \hspace{2.2cm} \text{Uniform probability measure on $D^{b,s}$}\\
&\mu   \hspace{2.2cm} \text{Uniform probability measure on $\Gamma^{b,s}$}&
\\  
&[p]_n  \hspace{1.82cm}  \text{The path in $ \Gamma^{b,s}_n $ determined by $p\in \Gamma^{b,s}$}&
\end{align*}
\begin{remark}
$V^{b,s}$ is a countable, dense subset of $D^{b,s}$. 

\end{remark}

\begin{remark}\label{RemarkEmbed2} In analogy to Remark~\ref{RemarkHierarchy}, $V^{b,s}$ is canonically identifiable with $\cup_{n=1}^\infty V^{b,s}_n$.  Also, $E^{b,s}_n$ and   $\Gamma^{b,s}_n$ define equivalence relations on  $E^{b,s}$ and   $\Gamma^{b,s}$ for each $n\in \mathbb{N}$.

\end{remark}

\begin{remark} The measure $\nu$ is defined such that $\nu(V^{b,s})=0$ and, under the interpretation of Remark~\ref{RemarkEmbed2}, $\nu(\mathbf{e})=1/|E^{b,s}_n|$ for each $n\in \mathbb{N}$ and $\mathbf{e}\in E^{b,s}_n$.  Similarly $\mu$ is defined so that  $\mu(\mathbf{p})=1/|\Gamma^{b,s}_n|  $ for any $\mathbf{p}\in \Gamma^{b,s}_n$.
\end{remark}

\begin{remark}\label{RemarkConcat}
Let $\big(\Gamma^{b,s}_{i,j},\mu^{(i,j)}\big)$ be copies of $(\Gamma^{b,s},\mu)$ corresponding to the embedded subcopies, $D^{b,s}_{i,j}$, of  $D^{b,s}$.  The path space $(\Gamma^{b,s},\mu)$ can be decomposed as 
\begin{align}
\Gamma^{b,s}\,=\, \bigcup_{i=1}^{b}\bigtimes_{j=1}^s  \Gamma^{b,s}_{i,j} \hspace{1cm}\text{and}\hspace{1cm}    \mu\,=\,\,\frac{1}{b}\sum_{i=1}^b \prod_{j=1}^s \mu^{(i,j)}\,
\end{align}
by way of $s$-fold concatenation of the paths.   
\end{remark}

 I will assume that $b<s$ throughout the remainder of the text except where specified otherwise. 
\begin{proposition}\label{PropPathInter}
Fix some $p\in \Gamma^{b,s}$ and let $q\in \Gamma^{b,s}$ be chosen uniformly at random, i.e.,  according to the  measure $\mu(dq)  $.  Define the set of intersection times  $\mathcal{I}_{p,q}=\{r\in [0,1]\,|\,p(r)=q(r)   \}$, and  define $N_{p,q}^{(n)}:=\sum_{k=1}^{s^n} 1_{ [p]_n(k) = [q]_n(k)} $, in other terms, as the number of bonds shared by $[p]_n,[q]_n\in \Gamma^{b,s}_n$.  Finally, let $\frak{h}\in (0,1)$ be defined as   $\frak{h}:=\frac{\log s- \log b    }{ \log s  }$.

\begin{enumerate}[(i).]

\item  As $n\rightarrow \infty $ the sequence of random variables $T^{(n)}_{p,q}=(\frac{1}{s^n})^{\frak{h}}N_{p,q}^{(n)}$ converges a.s.\ to a limit $T_{p,q}$.  The moment generating function $\varphi_n(t)=\mathbb{E}\big[\exp\big\{ tT_{p,q}^{(n)}  \big\}  \big]$ converges pointwise to the moment generating function of $T_{p,q}$, and,  in particular, the second moment of $T^{(n)}_{p,q}$ converges to the second moment of $T_{p,q}$.

\item  $\mathcal{I}_{p,q}$ is a finite set with probability $1-\frak{p}_{b,s}$ for $\frak{p}_{b,s}\in (0,1)$ satisfying $\frak{p}_{b,s}=\frac{1}{b}\big[1-(1- \frak{p}_{b,s})^s  \big]$.  In this case, the intersections occur only at vertex points.

\item In the event that $\mathcal{I}_{p,q}$ is  infinite, the Hausdorff dimension  of $\mathcal{I}_{p,q}$ is a.s.\ $\frak{h}$.

\end{enumerate}

\end{proposition}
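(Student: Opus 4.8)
The plan is to exploit the self-similar decomposition of $(\Gamma^{b,s},\mu)$ from Remark~\ref{RemarkConcat}. Fixing $p$ and sampling $q\sim\mu$, one randomization selects which of the $b$ first-generation branches $q$ occupies; it agrees with $p$'s branch with probability $1/b$, and on that event the problem splits into $s$ independent copies of itself, one per series segment. Writing $B$ for the Bernoulli$(1/b)$ branch-matching indicator and $T^{(n-1)}_1,\dots,T^{(n-1)}_s$ for independent copies of $T^{(n-1)}_{p,q}$, and using that $s^{\frak{h}}=s/b$ (so $T^{(n)}_{p,q}=(b/s)^nN^{(n)}_{p,q}$), this yields the distributional recursion $T^{(n)}_{p,q}\stackrel{d}{=}\tfrac{b}{s}B\sum_{j=1}^{s}T^{(n-1)}_j$. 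Equivalently, letting $\mathcal{F}_{n-1}$ be generated by $[q]_{n-1}$, each of the $N^{(n-1)}_{p,q}$ shared level-$(n-1)$ bonds is independently refined, contributing $s$ shared level-$n$ bonds with probability $1/b$ and none otherwise, so $\mathbb{E}[N^{(n)}_{p,q}\mid\mathcal{F}_{n-1}]=\tfrac{s}{b}N^{(n-1)}_{p,q}$ and $\{T^{(n)}_{p,q}\}$ is a nonnegative $\mathcal{F}_n$-martingale. Martingale convergence gives the a.s.\ limit $T_{p,q}$. Squaring the recursion and using $\mathbb{E}[T^{(n-1)}_j]=1$ gives $m_n:=\mathbb{E}[(T^{(n)}_{p,q})^2]=\tfrac{b}{s}m_{n-1}+\tfrac{b(s-1)}{s}$; since $s>b$ the factor $b/s<1$, so $m_n$ increases to $\tfrac{b(s-1)}{s-b}<\infty$, the martingale is $L^2$-bounded, and its second moments converge. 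Conditioning on $B$ turns the recursion into $\varphi_n(t)=\tfrac{1}{b}\varphi_{n-1}\big(\tfrac{b}{s}t\big)^{s}+1-\tfrac{1}{b}$; because the argument is contracted by $b/s<1$ toward the origin, where $\varphi_n(t)=1+t+O(t^2)$ uniformly in $n$, one obtains pointwise convergence of $\varphi_n$ to the fixed point, either by iterating the recursion until the argument is small and propagating a Lipschitz bound back, or by exhibiting a uniform bound $\sup_n\varphi_n(t)<\infty$ on a neighborhood of $0$ and combining uniform integrability of $\{e^{tT^{(n)}_{p,q}}\}_n$ with the a.s.\ convergence.

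For part (ii), the key observation is that $\{N^{(n)}_{p,q}\}_n$ is exactly a Galton--Watson process started from one individual with offspring law ``$s$ with probability $1/b$, and $0$ with probability $1-1/b$,'' of mean $s/b$. A non-vertex point of $\mathcal{I}_{p,q}$ lies in the interior of a shared bond at every generation, which forces survival; conversely, on extinction the total progeny is finite and the only intersection points are the finitely many vertices at the ends of terminated shared bonds. Hence $\mathcal{I}_{p,q}$ is infinite iff the process survives, with probability $\frak{p}_{b,s}=1-\pi$, where $\pi$ is the smallest root in $[0,1]$ of $z=\tfrac{1}{b}z^{s}+1-\tfrac{1}{b}$. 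Substituting $\pi=1-\frak{p}_{b,s}$ rearranges precisely to $\frak{p}_{b,s}=\tfrac{1}{b}[1-(1-\frak{p}_{b,s})^{s}]$. Since $f(x)=\tfrac{1}{b}[1-(1-x)^{s}]$ is concave with $f(0)=0$, $f'(0)=s/b>1$, and $f(1)=1/b<1$, the equation $f(x)=x$ has a unique root in $(0,1)$, giving $\frak{p}_{b,s}\in(0,1)$.

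For part (iii), on the survival event I would identify $\mathcal{I}_{p,q}$, modulo the countable vertex set, with the attractor of a random recursive construction in $[0,1]$: the time-interval of each shared level-$n$ bond has length $s^{-n}$ and is, independently, either deleted (probability $1-1/b$) or replaced by its $s$ equal subintervals of contraction ratio $1/s$ (probability $1/b$). This is a Mauldin--Williams/Galton--Watson fractal whose branching is supercritical precisely when $s>b$. The standard dimension theorem for such sets gives, conditionally on non-extinction, $\dim_H\mathcal{I}_{p,q}=d$ almost surely, where $d$ solves $\mathbb{E}[\#\text{offspring}]\,(1/s)^{d}=1$, i.e.\ $\tfrac{s}{b}s^{-d}=1$, which is $d=\tfrac{\log s-\log b}{\log s}=\frak{h}$.

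The martingale and second-moment computations are routine; the two delicate points are the pointwise MGF convergence in (i) and the Hausdorff-dimension lower bound in (iii). The former requires uniform control of exponential moments in $n$ on a genuine neighborhood of the origin, for which the contraction $b/s<1$ in the functional recursion is the essential lever. The latter requires placing a mass distribution on $\mathcal{I}_{p,q}$, naturally assembled from the limit variables $T_{p,q}$ attached to the surviving subtrees, and running an energy estimate to complement the easy first-moment upper bound; I expect this energy argument to be the main obstacle.
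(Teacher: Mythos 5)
Your parts (i) and (ii) follow the paper's own proof almost exactly: the same self-similar decomposition produces the martingale $\mathbf{m}_n=(b/s)^n N^{(n)}_{p,q}$, the same MGF recursion $\varphi_{n+1}(t)=\frac{b-1}{b}+\frac{1}{b}\varphi_n\big(\frac{b}{s}t\big)^s$, and the same Galton--Watson extinction analysis leading to $\frak{p}_{b,s}=\frac{1}{b}\big[1-(1-\frak{p}_{b,s})^s\big]$. Your explicit second-moment recursion $m_n=\frac{b}{s}m_{n-1}+\frac{b(s-1)}{s}\nearrow b(s-1)/(s-b)$ is a useful sharpening: it gives $L^2$-boundedness and convergence of second moments directly, rather than through pointwise MGF convergence, which both you and the paper only sketch (your plan of a uniform exponential bound near $0$ plus propagation through the recursion, exploiting that $t\mapsto\frac{b}{s}t$ contracts, is sound and is what a complete write-up would require). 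The genuine divergence is part (iii). The paper's upper bound is the first-moment covering estimate $H_{\frak{h}}(\mathcal{I}_{p,q})\le\liminf_n\mathbf{m}_n=\mathbf{m}_\infty<\infty$ (the bound you call easy), but its lower bound avoids energy integrals entirely: it restricts coverings to the ``surviving'' $s$-adic intervals, checks that the resulting restricted pre-measure is comparable to $H_h$, and derives the distributional recursion $L_{n+1}\stackrel{d}{=}\min\big(1,\sum_{j=1}^{\mathbf{n}}s^{-h}L^{(j)}_n\big)$ for optimal covering sums; then $x=\mathbb{P}[L_\infty=0]$ solves a strictly convex fixed-point equation whose only roots in $[0,1]$ are $0$ and $1$, and $x=1$ is excluded by a first-moment expansion that the paper itself presents only as a ``rough idea.'' Your route --- identifying $\mathcal{I}_{p,q}$, modulo the countable vertex set, with a Mauldin--Williams/Falconer random recursive construction (offspring $s$ with probability $1/b$, else $0$; ratios $1/s$) and invoking the standard theorem that, conditionally on non-extinction, $\dim_H$ equals the root of $\mathbb{E}\big[\sum_i r_i^d\big]=1$, i.e.\ $\frac{s}{b}s^{-d}=1$ --- is correct and legitimate, and the fallback you anticipate (a Frostman/energy estimate against the random mass assembled from subtree martingale limits) is precisely the argument that sits inside that theorem's proof. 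The trade-off: your citation-based route is shorter and fully rigorous given the reference; the paper's covering-sum recursion is self-contained and elementary, at the price of a final step that is left heuristic in the text.
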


\begin{definition}
The  \textit{intersection time} $T_{p,q}$ of two paths $p,q\in \Gamma^{b,s}$  is defined as $$\displaystyle T_{p,q}\,:=\, \lim_{n\rightarrow \infty}\Big(\frac{1}{s^n}\Big)^{\frak{h}}N_{p,q}^{(n)} \,.   $$

\end{definition}

\begin{remark} The intersection time satisfies the formal identity
\begin{align}\label{LTime}
\int_0^1 \int_0^1\delta_D\big(p(r),q(t)\big)drdt   \,=\,T_{p,q} \,,
\end{align}
where $\delta_D$ is the  $\delta$-distribution on $D^{b,s}$ satisfying $f(x)=\int_{D^{b,s}  }\delta_D(x,y)f(y)\nu(y)$ for a test function $f:D^{b,s}\rightarrow \R$. The above identity can be understood in terms of the discrete graphs $D^{b,s}_n$ for which any two directed paths  $\mathbf{p},\mathbf{q}:\{1,\ldots, s^n\}\rightarrow E^{b,s}_n $ satisfy
 \begin{align}     
\frac{1}{s^{n}}\sum_{1\leq j\leq s^n}\frac{1}{s^{n}}\sum_{1\leq k\leq s^n} \frac{ 1_{ \mathbf{p}_j =\mathbf{q}_k}      }{  (bs)^{-n}   }  \,=\,& \Big(\frac{1}{s^n}\Big)^{\frak{h}} \sum_{1\leq j\leq s^n}  1_{ \mathbf{p}_j =\mathbf{q}_j}  \, =\, \Big(\frac{1}{s^n}\Big)^{\frak{h}} N^{(n)}_{p,q}  \,.
\end{align}
\end{remark}

\vspace{.5cm}

\noindent \textbf{(C).  A Gaussian field on directed paths}\vspace{.2cm}\\
 Let $\mathbf{W}$ denote a Gaussian white noise on $(D^{b,s},\nu)$  defined within some probability space $(\Omega,\mathcal{F},\mathbb{P})$, i.e., a linear map from $\mathcal{H}:=L^2(D^{b,s},\nu)$ into a Gaussian subspace of  $L^2(\Omega,\mathcal{F},\mathbb{P})$ such that
\begin{align*}
\psi\in  \mathcal{H}  \hspace{.5cm}\longmapsto\hspace{.5cm} \mathbf{W}(\psi)\,\sim\, \mathcal{N}\big(0,  \|\psi \|_{ \mathcal{H}  }^2\big)\,.
\end{align*}
I also use the alternative notations
$$ \mathbf{W}(\psi)\,\equiv\, \langle \mathbf{W} ,\, \psi\rangle  \,\equiv \,\int_{ D^{b,s}}\mathbf{W}(x) \psi(x)\nu(dx)\,,   $$
where the field  $\mathbf{W}(x)$,   $x\in D^{b,s}$ formally satisfies the $\delta$-correlation $\mathbb{E}[\mathbf{W}(x)\mathbf{W}(y)   \big]=\delta_D(x,y)$.

Next I discuss a field $\mathbf{W}_p$, $\p\in \Gamma^{b,s}$ formally defined by integrating the white noise along paths:
$$  \mathbf{W}_p \,=\,\int_0^1 \mathbf{W}\big(p(r)   \big) dr  \,.$$
The kernel $K_\Gamma(p,q)$ is equal to the intersection time, $T_{p,q} $, since~(\ref{LTime}) yields that
\begin{align*}
K_\Gamma(p,q)\,=\,\mathbb{E}\big[ \mathbf{W}_p  \mathbf{W}_q   \big] \,=\,\int_0^1 \int_0^1\delta_D\big(p(r),q(t)\big)drdt   \,=\,T_{p,q}    \,.
\end{align*}

Define the linear map $Y:\mathcal{H}\longrightarrow L^2(\Gamma^{b,s},\mu)  $  as
$$     (Y\psi)(p)\,=\,\int_0^1 \psi\big(p(r)\big)dr\,.  $$
To see that $Y$ is a bounded operator, notice that by Jensen's inequality 
$$\int_{ \Gamma^{b,s}  }\big|(Y\psi)(p)\big|^2\mu(dp)   \,\leq \,   \int_{\Gamma^{b,s}}\int_0^1 \big|\psi\big(p(r)\big)\big|^2dr\mu(dp) \,=\,\int_{D^{b,s}}\big|\psi(x)\big|^2\nu(dx)\,=\, \|\psi\|_\mathcal{H}^2\,,  $$
where the first equality  holds because the weight  assigned by $\nu$ to a set $R\subset D^{b,s}$ is equal to the expected amount of time that a  path $p\in \Gamma^{b,s}$ chosen uniformly at random spends in $R$; see  Proposition~\ref{PropUniform} and Remark~\ref{RemarkMuToNu}.  In particular, $Y$ is continuous when the topology of the codomain is identified with $L^0(\Gamma^{b,s},\mu)  $, i.e., convergence in measure with respect to $\mu$. In the terminology of~\cite{Shamov}, a continuous function $Y$ from $\mathcal{H}$  to  $L^0(\Gamma^{b,s},\mu)$ is referred to as a \textit{generalized $\mathcal{H}$-valued function} over the measure space $(\Gamma^{b,s},\mu)$.   The pair $(\mathbf{W},Y)$ encodes  the Gaussian field $\mathbf{W}_p$ on $ \Gamma^{b,s}$
by defining
\begin{align}\label{Tizle}
\int_{\Gamma^{b,s}}  \mathbf{W}_p f(p)\mu(dp)\,:=\, \langle \mathbf{W}, Y^*f\rangle\hspace{.4cm}\text{for a test function} \hspace{.4cm} f\in L^2\big(\Gamma^{b,s},\mu\big) \,. 
\end{align}
The adjoint $Y^*:L^2\big(\Gamma^{b,s},\mu\big)\rightarrow \mathcal{H}$ can be expressed in the form $(Y^*f)(x)=\int_{\Gamma^{b,s}}f(p)\int_0^1\delta_{D}( p(r), x)$.  I will use the  notation $ (Y\psi)(p) \equiv\langle  Y_p, \psi  \rangle $ and summarize~(\ref{Tizle}) as  $  \mathbf{W}_p\,=\, \langle Y_p , \mathbf{W}\rangle $.

\vspace{.5cm}

\noindent \textbf{(D). Gaussian multiplicative chaos on paths}\vspace{.1cm}\\
In this section, I discuss a random measure $M_{\beta}$ on $\Gamma^{b,s}$ formally related to $\mu$ and the field $\mathbf{W}_p$ through
\begin{align}
M_{\beta}(dp)\,=\, e^{ \beta \mathbf{W}_p-\frac{\beta^2}{2}\mathbb{E}[ \mathbf{W}_p^2]}\mu(dp)  \,.
\end{align}
The above does not define $ \textup{exp}\big\{ \beta \mathbf{W}_p-\frac{\beta^2}{2}\mathbb{E}[ \mathbf{W}_p^2]\big\}$ as a Radon-Nikodym derivative since $\{\mathbf{W}_p\}_{p \in \Gamma^{b,s}}$ is a Gaussian field rather than just an indexed family of centered Gaussian random variables.  
I state the proposition below using  Shamov's formulation of subcritical GMC measures~\cite{Shamov}.  

\begin{proposition}\label{ThmExist}
There exists a unique random measure, $M_{\beta}(dp)$, on $(\Gamma^{b,s},\mu)$ satisfying the properties (I)-(III) below.

\begin{enumerate}[(I).]

\item $\mathbb{E}[ M_{\beta}]= \mu$

\item $M_{\beta}$ is adapted to the white noise $\mathbf{W}$.  Thus I can write  $M_{\beta}(dp)\equiv M_{\beta}(\mathbf{W}, dp)$.  

\item  For $\psi\in \mathcal{H}$ and  a.e.\ realization of the field $\mathbf{W}$,
$$ M_{\beta}(\mathbf{W}+\psi, dp)\,=\,e^{\beta (Y\psi)(p) }M_{\beta}(\mathbf{W}, dp)   \,. $$

\end{enumerate}

\end{proposition}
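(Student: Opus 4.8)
The plan is to construct $M_\beta$ through an approximation scheme indexed by the generation $n$ and then verify that the three properties characterize it uniquely. First I would introduce the natural martingale approximation: let $\mathbf{W}_p^{(n)} := \langle Y_p^{(n)}, \mathbf{W}\rangle$, where $Y_p^{(n)}$ projects the path integral onto the finite-dimensional subspace $\mathcal{H}_n := L^2(\Gamma_n^{b,s})$ generated by the generation-$n$ bonds, i.e.\ one replaces $\psi$ by its conditional average over the bonds of $E_n^{b,s}$. Because $D^{b,s}$ is built from $(bs)^n$ rescaled copies at level $n$, the field $\mathbf{W}_p^{(n)}$ depends only on finitely many i.i.d.\ Gaussian weights (one per bond), so the approximating measures
\begin{align*}
M_\beta^{(n)}(dp)\,=\,e^{\beta \mathbf{W}_p^{(n)}-\frac{\beta^2}{2}\mathbb{E}[(\mathbf{W}_p^{(n)})^2]}\mu(dp)
\end{align*}
are genuine measures with honest Radon--Nikod\'ym derivatives. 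By construction $\mathbb{E}[M_\beta^{(n)}]=\mu$, giving property (I) at each level.

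The core analytic step is to show that $M_\beta^{(n)}$ converges. Here I would invoke the fact that $\{M_\beta^{(n)}(f)\}_{n}$ is a nonnegative martingale with respect to the filtration $\mathcal{F}_n$ generated by the generation-$\leq n$ bond weights, for each fixed bounded test function $f$; the martingale property follows from $\mathbb{E}[M_\beta^{(n+1)}\mid \mathcal{F}_n]=M_\beta^{(n)}$, which in turn reduces to the normalization of the Gaussian exponential on the newly refined bonds. The martingale convergence theorem then yields an almost sure limit $M_\beta(f)$, and to promote this to a genuine measure I would control the second moment: $\mathbb{E}[M_\beta^{(n)}(f)^2]$ is governed by $\int\int f(p)f(q)\,e^{\beta^2 T_{p,q}^{(n)}}\mu(dp)\mu(dq)$, and subcriticality (the case $s>b$, so that $\frak{h}\in(0,1)$ and $T_{p,q}$ has finite exponential moments by Proposition~\ref{PropPathInter}(i)) guarantees this stays bounded uniformly in $n$. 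Uniform integrability then upgrades almost sure convergence to $L^1$ convergence, preserving $\mathbb{E}[M_\beta]=\mu$ in the limit and showing the limit is not degenerate.

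With existence in hand, I would verify the two structural properties. Property (II), adaptedness, is immediate because each $M_\beta^{(n)}$ is a measurable function of $\mathbf{W}$ and the limit inherits this. Property (III), the Cameron--Martin shift covariance, is the identity that pins down the GMC and is cleanest to check at the approximating level: shifting $\mathbf{W}\mapsto \mathbf{W}+\psi$ shifts $\mathbf{W}_p^{(n)}\mapsto \mathbf{W}_p^{(n)}+(Y^{(n)}\psi)(p)$, and a direct computation with the Gaussian exponential produces the factor $e^{\beta(Y^{(n)}\psi)(p)}$ together with a correction that is absorbed by the normalization; passing to the limit and using $(Y^{(n)}\psi)(p)\to(Y\psi)(p)$ in $L^2(\mu)$ yields the stated relation. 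For \emph{uniqueness}, I would appeal directly to Shamov's characterization~\cite{Shamov}: properties (I)--(III) are exactly his axioms identifying the subcritical GMC as the unique random measure whose Laplace/shift functional is determined by the generalized $\mathcal{H}$-valued function $Y$, so any two measures satisfying (I)--(III) must agree.

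The main obstacle I anticipate is the second-moment control that underlies both convergence and nondegeneracy: one must show $\sup_n \mathbb{E}[M_\beta^{(n)}(f)^2]<\infty$, which hinges on the exponential integrability $\mathbb{E}[e^{\beta^2 T_{p,q}}]<\infty$ of the intersection time. This is where the self-similar recursive structure of $T_{p,q}^{(n)}$ (the fixed-point relation in Proposition~\ref{PropPathInter}) does the real work, and where the restriction $s>b$ is essential—at criticality $b=s$ the exponential moment would blow up and this martingale approach would fail, consistent with the remark in the introduction that the $b=s$ case needs a different construction.
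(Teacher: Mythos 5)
Your proposal is correct, and the existence half is essentially the paper's own proof: both construct the finite-dimensional GMC measures $M^{(n)}_\beta$ with explicit Radon--Nikod\'ym densities built from the coarse-grained maps $Y^{(n)}$ (Proposition~\ref{PropGMCApprox}), observe that they form a martingale with respect to the filtration $\mathcal{F}_n$ (via the orthogonality of the increments $Y^{(n)}_p-Y^{(n-1)}_p$, Proposition~\ref{PropYs}(iv)), and get a.s.\ vague convergence plus non-degeneracy from uniform second-moment bounds, which reduce exactly as you say to $\sup_n\int\int e^{\beta^2(b/s)^n N^{(n)}_{p,q}}\mu(dp)\mu(dq)<\infty$, i.e.\ to the exponential integrability of the intersection time from Proposition~\ref{PropPathInter}(i), valid only when $s>b$. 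Where you genuinely diverge is uniqueness. You outsource it to Shamov's general theorem that a subcritical GMC over a given field is unique; this is legitimate, since properties (I)--(III) are precisely Shamov's axioms for a GMC over $(\mathbf{W},\beta Y)$ with expectation $\mu$ (the paper says as much in a remark), and it makes the uniqueness half a citation. The paper instead gives a self-contained argument: given any $\widetilde{M}_\beta$ satisfying (I)--(III), it conditions on $\mathcal{F}_n$, checks that $\mathbb{E}[\widetilde{M}_\beta\,|\,\mathcal{F}_n]$ is again a measure satisfying the shift relation for the \emph{finite-dimensional} field $(\mathbf{W},\beta Y^{(n)})$ --- where uniqueness is elementary because the Cameron--Martin shift pins down the density --- hence equals $M^{(n)}_\beta$, and then recovers $\widetilde{M}_\beta=\lim_n M^{(n)}_\beta=M_\beta$ by martingale convergence of conditional expectations. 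The paper's route costs an extra page but keeps the proposition independent of Shamov's machinery and shows along the way that \emph{any} solution of (I)--(III) is the limit of the explicit martingale, which is what Proposition~\ref{PropGMCApprox} asserts; your route is shorter, but the real work is transferred to the cited theorem, so you should verify its hypotheses explicitly (that $Y$ is a generalized $\mathcal{H}$-valued function, established in Section~\ref{SecOverview}(C), and that the expectation measure is $\sigma$-finite, which holds since $\mathbb{E}[M_\beta]=\mu$ is a probability measure). One cosmetic point: your notation $\mathcal{H}_n:=L^2(\Gamma^{b,s}_n)$ collides with the paper's use of $\mathcal{H}_n$ for the eigenspaces of the operator $D$ in Proposition~\ref{PropYProp}; the object you intend is the range of the conditional-expectation map defining $Y^{(n)}$ in Definition~\ref{DefYNs}.
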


\begin{remark}
 $M_\beta$ is the subcritical GMC on $(\Gamma^{b,s},\mu)$ over the field $(\mathbf{W}, Y  )$ with expectation $\mu$.
\end{remark}

\begin{remark}
I prove Proposition~\ref{ThmExist}  by constructing a sequence of GMC measures $\big\{M_{\beta}^{(n)}\big\}_{n\in \mathbb{N}}$ that form a martingale and have well-defined Radon-Nikodym derivatives $dM_{\beta}^{(n)}/d\mu $.
\end{remark}

 By~\cite{Shamov} the existence and uniqueness of the subcritical GMC measure $M_\beta$ in Proposition~\ref{ThmExist} 
is equivalent to the operator $\beta Y$ defining a \textit{random shift} of the field $\mathbf{W}$.  In other terms, the law $\widetilde{\mathbb{P}}_{\beta} $ determined by $ \mathcal{L}_{\widetilde{\mathbb{P}}_{\beta} } [ \mathbf{W}]:=\mathcal{L}_{\mathbb{P}\times \mu }  [\mathbf{W}+\beta Y_p] $
is absolutely continuous with respect to the law $\mathbb{P}$.

\begin{theorem}\label{ThmRandomShift} $\beta Y$ defines a random shift of the field $\mathbf{W}$.

\end{theorem}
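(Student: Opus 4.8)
The plan is to reduce the random-shift property to the uniform integrability of a non-negative, mean-one martingale of total masses, and then to verify that martingale is bounded in $L^2$. I would first set up the hierarchical filtration: let $\mathcal{F}_n\subset\mathcal{F}$ be the $\sigma$-algebra generated by the bond-averages $\big\{\tfrac{1}{\nu(\mathbf e)}\int_{\mathbf e}\mathbf{W}\,d\nu\big\}_{\mathbf e\in E_n^{b,s}}$, which are independent centered Gaussians, and set $\mathbf{W}^{(n)}_p:=\mathbb{E}[\mathbf{W}_p\,|\,\mathcal{F}_n]=\tfrac{1}{s^n}\sum_{k=1}^{s^n}\tfrac{1}{\nu([p]_n(k))}\int_{[p]_n(k)}\mathbf{W}\,d\nu$. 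The coarse-grained shift $Y_{n,p}:=\mathbb{E}[Y_p\,|\,\mathcal F_n]$ is then a genuine element of $\mathcal{H}$ with $\|Y_{n,p}\|_{\mathcal H}^2=\mathbb{E}[(\mathbf W^{(n)}_p)^2]=b^n<\infty$, so for each fixed $p$ the Cameron--Martin theorem gives $\mathcal{L}_{\mathbb{P}}[\mathbf{W}+\beta Y_{n,p}]\ll\mathbb{P}$ with density $\exp\{\beta\mathbf{W}^{(n)}_p-\tfrac{\beta^2}{2}b^n\}$. Averaging this family of absolutely continuous laws against $\mu(dp)$ shows that the mixture $\mathcal{L}_{\mathbb{P}\times\mu}[\mathbf{W}+\beta Y_{n,p}]$ is absolutely continuous with respect to $\mathbb{P}$, with Radon--Nikodym density equal to the total mass $Z_n:=M_\beta^{(n)}(\Gamma^{b,s})=\int_{\Gamma^{b,s}}\exp\{\beta\mathbf{W}^{(n)}_p-\tfrac{\beta^2}{2}b^n\}\mu(dp)$.

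Since $\mathbf W_p^{(n)}=\mathbb E[\mathbf W_p^{(n+1)}\mid \mathcal F_n]$ and the increments are $\mathcal F_n$-independent Gaussians of variance $b^{n+1}-b^n$, each per-path density is an exponential martingale, so integrating over $\mu$ shows that $\{Z_n,\mathcal F_n\}$ is a non-negative martingale with $\mathbb E[Z_n]=1$. Because $(Y_n\psi)(p)\to (Y\psi)(p)$ and the covariance $T^{(n)}_{p,q}\to T_{p,q}$ almost surely (Proposition~\ref{PropPathInter}(i)), testing against cylinder functionals $F$ of $\mathbf{W}$ gives $\mathbb E_{\mathbb P\times\mu}[F(\mathbf W+\beta Y_{n,p})]=\mathbb E_{\mathbb P}[F(\mathbf W)\,Z_n]$, so $\mathcal L_{\mathbb P\times\mu}[\mathbf W+\beta Y_{n,p}]$ converges to $\widetilde{\mathbb P}_{\beta}$. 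It therefore suffices to show $Z_n\to Z_\infty$ in $L^1(\mathbb P)$ with $\mathbb E[Z_\infty]=1$, for then $\widetilde{\mathbb P}_{\beta}$ tested against $F$ equals $\mathbb E_{\mathbb P}[F(\mathbf W)\,Z_\infty]$, exhibiting $Z_\infty$ as the density of $\widetilde{\mathbb P}_{\beta}$ with respect to $\mathbb P$. I would obtain this by proving the stronger statement that the martingale is bounded in $L^2$.

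The $L^2$ computation is explicit. Using the Gaussian moment generating function together with $\mathbb E[\mathbf W^{(n)}_p\mathbf W^{(n)}_q]=T^{(n)}_{p,q}$ and $\mathbb E[(\mathbf W_p^{(n)})^2]=b^n$, the cross terms in $Z_n^2$ collapse to
\begin{align*}
\mathbb E\big[Z_n^2\big]\,=\,\int_{\Gamma^{b,s}}\int_{\Gamma^{b,s}} e^{\beta^2 T^{(n)}_{p,q}}\,\mu(dp)\,\mu(dq)\,=\,\varphi_n(\beta^2)\,,
\end{align*}
where $\varphi_n$ is the moment generating function of $T^{(n)}_{p,q}$ from Proposition~\ref{PropPathInter}(i). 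That proposition gives $\varphi_n(\beta^2)\to\mathbb E[e^{\beta^2 T_{p,q}}]$, so a uniform $L^2$ bound -- and hence uniform integrability -- reduces to the finiteness of $\mathbb E[e^{\beta^2 T_{p,q}}]$.

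Establishing this finite exponential moment is the main obstacle, and it is where the subcritical hypothesis $s>b$ enters decisively. I would exploit the self-similarity of Remark~\ref{RemarkConcat}: conditioning on whether the two paths share their first-generation branch yields the distributional fixed point $T_{p,q}\stackrel{d}{=}\mathbf 1_{\{\text{same branch}\}}\,\tfrac{b}{s}\sum_{j=1}^{s}T_j$, where the same-branch event has probability $1/b$ and $T_1,\dots,T_s$ are independent copies of $T_{p,q}$; equivalently, $\varphi(t):=\mathbb E[e^{tT_{p,q}}]$ solves $\varphi(t)=1-\tfrac1b+\tfrac1b\,\varphi(\tfrac{b}{s}t)^s$. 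Finiteness of $\varphi$ near the origin follows from a contractive moment recursion (the second moment obeys $m_2^{(n)}=\tfrac{b}{s}m_2^{(n-1)}+\tfrac{b(s-1)}{s}$, which converges because $b/s<1$, with analogous contractions controlling higher moments), and since $s/b>1$ the functional equation propagates finiteness outward: if $\varphi$ is finite on $[0,t_1]$ then its right-hand side is finite on $[0,t_1 s/b]$, and iterating covers all of $[0,\infty)$. This yields $\sup_n\varphi_n(\beta^2)<\infty$ for every $\beta$, completing the $L^2$ bound; the remaining steps -- martingale convergence, identification of $Z_\infty$ as the density, and the weak convergence of the shifted fields -- are then routine.
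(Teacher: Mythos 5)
Your proposal is correct and is in substance the paper's own argument: both reduce the random-shift property to the fact that the total masses $M^{(n)}_\beta(\Gamma^{b,s})$ of the coarse-grained GMC approximations form a non-negative, mean-one martingale that is bounded in $L^2$ because $\mathbb{E}\big[\big(M^{(n)}_\beta(\Gamma^{b,s})\big)^2\big]=\varphi_n(\beta^2)\rightarrow \mathbb{E}\big[e^{\beta^2 T_{p,q}}\big]<\infty$ (Propositions~\ref{PropYField} and~\ref{PropPathInter}), and both identify the Radon--Nikodym derivative $d\widetilde{\mathbb{P}}_\beta/d\mathbb{P}$ with the martingale limit $M_\beta(\mathbf{W},\Gamma^{b,s})$. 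The only difference is presentational: the paper verifies this identification by matching exponential functionals $\mathbb{E}\big[e^{\langle \mathbf{W},\psi\rangle}\big]$ in a single chain of Gaussian identities, whereas you make the per-path Cameron--Martin shift explicit and then combine weak convergence of the mixture laws with $L^1$ convergence of the densities.
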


\begin{theorem}\label{ThmProp}
For $\beta>0$ let the  random measure $M_{\beta}(dp)$ be defined as in Proposition~\ref{ThmExist}.
\begin{enumerate}[(i).]

\item $M_{\beta}$ is a.s.\ mutually singular to $\mu$.

\item The product measure $M_{\beta}\times M_{\beta}$ is a.s.\ supported on pairs $(p,q)\in \Gamma^{b,s}\times \Gamma^{b,s}$ such that the intersection set $\mathcal{I}_{p,q}=\{r\in[0,1]\,|\, p(r)=q(r)\}$ is either finite or has Hausdorff dimension $\frak{h}=(\log s-\log b)/\log s$.

\item Let $(\Gamma^{b,s}_{i,j},M_{\beta}^{(i,j)}) $ be independent copies of $(\Gamma^{b,s},M_{\beta}) $ corresponding to the first-generation embedded copies, $D^{b,s}_{i,j}$, of $D^{b,s}$.  Then there is equality in distribution of random measures
\begin{align*}
M_{\sqrt{\frac{s}{b}}\beta }\,\stackrel{d}{=}\, \frac{1}{b}\sum_{i=1}^b  \prod_{j=1}^s  M_{\beta }^{(i,j)} \hspace{.5cm}\text{under the identification}\hspace{.5cm} \Gamma^{b,s}\,\equiv\, \bigcup_{i=1}^{b}\bigtimes_{j=1}^s  \Gamma^{b,s}_{i,j} \,. 
\end{align*}

\end{enumerate}

\end{theorem}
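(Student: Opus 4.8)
Throughout I work with the martingale approximations $M_\beta^{(n)}$ from the construction behind Proposition~\ref{ThmExist}, whose density
\[ X_n(p):=\frac{dM_\beta^{(n)}}{d\mu}(p)=e^{\beta\mathbf{W}_p^{(n)}-\frac{\beta^2}{2}b^n} \]
depends only on $[p]_n$; here $\mathbf{W}_p^{(n)}$ is the path integral of the conditional expectation of $\mathbf{W}$ onto the $n^{\mathrm{th}}$-generation edges, a centered Gaussian field with $\mathbb{E}[\mathbf{W}_p^{(n)}\mathbf{W}_q^{(n)}]=T^{(n)}_{p,q}$ and in particular $\mathbb{E}[(\mathbf{W}_p^{(n)})^2]=b^n$. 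The basic tool is the martingale relation $\mathbb{E}[M_\beta(A)\mid\mathcal{F}_n]=M_\beta^{(n)}(A)$ for generation-$n$ cylinder sets $A$, with $\mathcal{F}_n$ generated by the generation-$n$ edge averages of $\mathbf{W}$. For (i) the plan is to exhibit a random $\mu$-null set carrying the full mass of $M_\beta$. Since $\mathbf{W}_p^{(n)}$ has variance $b^n\to\infty$, the drift $-\frac{\beta^2}{2}b^n$ beats the $O(b^{n/2})$ Gaussian fluctuations, so a Borel--Cantelli estimate on Gaussian tails shows that for $\mu$-a.e.\ $p$ one has $X_n(p)\le 2^{-n}$ for all large $n$; hence $B:=\limsup_n\{p:X_n(p)\le 2^{-n}\}$ satisfies $\mu(B)=1$ a.s. On the other hand $\{X_n\le 2^{-n}\}$ is a generation-$n$, $\mathcal{F}_n$-measurable cylinder, so the martingale relation gives $\mathbb{E}[M_\beta(\{X_n\le 2^{-n}\})]=\mathbb{E}\big[\int_{\{X_n\le 2^{-n}\}}X_n\,d\mu\big]\le 2^{-n}$, whence $\sum_n M_\beta(\{X_n\le 2^{-n}\})<\infty$ and $M_\beta(B)=0$ a.s. Thus $M_\beta$ lives on the $\mu$-null set $B^c$, and combined with the a.s.\ positivity of $M_\beta(\Gamma^{b,s})$ for the subcritical chaos this yields genuine mutual singularity.

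For (ii) the engine is the GMC second-moment identity $\mathbb{E}[M_\beta\times M_\beta]=e^{\beta^2 T_{p,q}}\,\mu(dp)\mu(dq)$, obtained by letting $n\to\infty$ in the exact finite-level identity $\mathbb{E}[M_\beta^{(n)}\times M_\beta^{(n)}]=e^{\beta^2 T^{(n)}_{p,q}}\,\mu\times\mu$ (a one-line Gaussian computation from the covariance above) together with $T^{(n)}_{p,q}\to T_{p,q}$. In particular the expected product measure is absolutely continuous with respect to $\mu\times\mu$, with a.e.-finite density $e^{\beta^2 T_{p,q}}$. By Proposition~\ref{PropPathInter}(ii)--(iii) the set $S$ of pairs $(p,q)$ for which $\mathcal{I}_{p,q}$ is finite or has Hausdorff dimension $\frak{h}$ has full $\mu\times\mu$-measure, so $\mathbb{E}[(M_\beta\times M_\beta)(S^c)]=\int_{S^c}e^{\beta^2 T_{p,q}}\,\mu\times\mu=0$; since $M_\beta\times M_\beta\ge 0$ this forces $(M_\beta\times M_\beta)(S^c)=0$ a.s., which is exactly the asserted support statement.

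For (iii) I would prove the identity level by level and pass to the limit. Writing $\mathbf{W}$ as independent rescaled copies $\mathbf{W}^{(i,j)}$ on the subcopies $D^{b,s}_{i,j}$ and rescaling time on each of the $s$ in-series segments yields the exact field decomposition $\mathbf{W}_p^{(n)}=\sqrt{b/s}\sum_{j=1}^s\mathbf{W}_{p_j}^{(i,j),(n-1)}$ for $p$ in branch $i$ with segments $p_j\in\Gamma^{b,s}_{i,j}$. Substituting this and the decomposition $\mu=\frac1b\sum_i\prod_j\mu^{(i,j)}$ of Remark~\ref{RemarkConcat} into $X_n$, and taking the left-hand parameter to be $\sqrt{s/b}\,\beta$, turns the field prefactor into $\beta$ and splits the normalisation $\frac12(s/b)\beta^2 b^n=\frac{\beta^2}{2}s\,b^{n-1}$ into the $s$ factors $\frac{\beta^2}{2}b^{n-1}$; this gives the level-$n$ identity $M_{\sqrt{s/b}\,\beta}^{(n)}=\frac1b\sum_i\prod_j M_\beta^{(i,j),(n-1)}$. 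Letting $n\to\infty$, using the a.s.\ convergence of each (sub)martingale and the independence of the factors, delivers the stated equality in distribution.

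The step I expect to require the most care is the limit interchange behind the second-moment identity in (ii): one must justify $\mathbb{E}[M_\beta\times M_\beta]=\lim_n\mathbb{E}[M_\beta^{(n)}\times M_\beta^{(n)}]$ as (possibly $\sigma$-finite) measures while the density $e^{\beta^2 T_{p,q}}$ blows up along the diagonal $\{p=q\}$. Because both the diagonal and the exceptional set $S^c$ are $\mu\times\mu$-null, it is enough to secure the absolute continuity of the expected product measure, or even only the upper bound $\mathbb{E}[(M_\beta\times M_\beta)(A)]\le\int_A e^{\beta^2 T_{p,q}}\,\mu\times\mu$ on generation-$n$ cylinders $A$, which follows from Fatou applied to the approximating martingale (and cleanly from uniform integrability in the $L^2$ regime furnished by the exponential moments of $T_{p,q}$ in Proposition~\ref{PropPathInter}(i)).
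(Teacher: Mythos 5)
Your proposal is correct, and on parts (i) and (iii) it genuinely departs from the paper's proof, while on (ii) it is a streamlined repackaging of the same computation. For (i) the paper never touches the approximating densities: it splits $M_\beta$ into absolutely continuous and singular parts, argues by symmetry that $\mathbb{E}[M_\beta^{(c)}]=\lambda\mu$, invokes the \emph{uniqueness} clause of Proposition~\ref{ThmExist} to force $\lambda\in\{0,1\}$, and rules out $\lambda=1$ because shift covariance would make the Radon--Nikod\'ym derivative transform by $e^{\beta\langle\phi,Y_p\rangle}$, possible only if $Y_p\in\mathcal{H}$, contradicting $\|Y^{(n)}_p\|^2_{\mathcal{H}}=b^n\to\infty$. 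Your Kahane-style peeling argument (Gaussian tails plus Borel--Cantelli give $X_n(p)\le2^{-n}$ eventually for $\mu$-a.e.\ $p$ a.s., while $\sum_n\mathbb{E}\big[M_\beta(\{X_n\le2^{-n}\})\big]\le\sum_n2^{-n}<\infty$ makes those sets $M_\beta$-negligible) is more elementary and quantitative; its only borrowed ingredient is the relation $\mathbb{E}[M_\beta(A)\,|\,\mathcal{F}_n]=M_\beta^{(n)}(A)$, which the paper establishes inside the uniqueness half of Proposition~\ref{ThmExist}, and your application of it to the random set $\{X_n\le2^{-n}\}$ is legitimate since that set is a finite, $\mathcal{F}_n$-measurably selected union of generation-$n$ cylinders. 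For (ii) both proofs run on the second-moment measure; the paper computes $\mathbb{E}\big[\int e^{\alpha T_{p,q}}M_\beta\times M_\beta\big]$, sends $\alpha\to-\infty$, and needs an additional independence argument (its display~(\ref{Dinkle})) to equate $\mathbb{E}\big[M_\beta\times M_\beta(\{|\mathcal{I}_{p,q}|<\infty\})\big]$ with its $\mu\times\mu$ value, whereas your absolute-continuity formulation $\mathbb{E}[M_\beta\times M_\beta]=e^{\beta^2T_{p,q}}\,\mu\times\mu$ (your constant $\beta^2$ matches the paper's own finite-level formula~(\ref{Cross})) disposes of the entire $\mu\times\mu$-null set $S^c$ in one stroke. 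The interchange of limits you flag is real but resolves exactly as you indicate: a.s.\ convergence $T^{(n)}_{p,q}\to T_{p,q}$ together with convergence of moment generating functions (Proposition~\ref{PropPathInter}(i)) gives $L^1(\mu\times\mu)$ convergence of $e^{\beta^2T^{(n)}_{p,q}}$ by Scheff\'e, uniform integrability yields the identity on products of cylinders, and agreement of two finite measures on that generating algebra extends to all Borel sets; note the paper performs the same interchange in~(\ref{Char}) without comment. For (iii) the paper does not iterate at finite level: it verifies that $\frac1b\sum_{i=1}^b\prod_{j=1}^sM_\beta^{(i,j)}$ obeys properties (I)--(III) for the assembled field $(\widetilde{\mathbf{W}},\sqrt{s/b}\,\beta\widetilde{Y})$ and appeals to uniqueness, while you prove the exact identity $M^{(n)}_{\sqrt{s/b}\,\beta}=\frac1b\sum_{i=1}^b\prod_{j=1}^sM^{(i,j),(n-1)}_\beta$ under an explicit coupling of the white noises (your scaling bookkeeping is right: the field prefactor $\sqrt{b/s}$ and the split $\frac{s\beta^2}{2b}b^n=s\cdot\frac{\beta^2}{2}b^{n-1}$) and pass to the vague limit. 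Your route produces the equality almost surely under the coupling, which is stronger than equality in law, at the price of justifying weak convergence of products of random measures; the paper's route is shorter because it recycles Proposition~\ref{ThmExist}. Both are sound.
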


\begin{remark}
Part (ii) of the Theorem~\ref{ThmProp} implies that the random measure $M_{\beta}$ a.s.\ has no atoms. 

\end{remark}

The next theorem states two strong disorder properties in the  $\beta\gg 1$ regime. Analogous results were obtained in~\cite{Lacoin} for discrete polymers on diamond graphs. 

\begin{theorem}\label{ThmDisorder}
Let the  random measure $M_{\beta}(dp)$ be defined as in Proposition~\ref{ThmExist}, and define the random probability measure $Q_\beta(dp)=M_\beta(dp)/M_\beta(\Gamma^{b,s})$.  As $\beta\rightarrow \infty$,

\begin{enumerate}[(i).]

\item the random variable $M_{\beta}(\Gamma^{b,s}   )$ converges in probability to $0$, and

\item  the random variable $\displaystyle \max_{\mathbf{p}\in \Gamma^{b,s}_n} Q_\beta(\mathbf{p})$ converges in probability to $1$ for each fixed $n$.

\end{enumerate}

\end{theorem}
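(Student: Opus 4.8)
The plan is to exploit the self-similar recursion in part (iii) of Theorem~\ref{ThmProp}, which says $M_{\sqrt{s/b}\,\beta}\stackrel{d}{=}\frac{1}{b}\sum_{i=1}^b\prod_{j=1}^s M_\beta^{(i,j)}$, to set up a recursive distributional equation for the total mass $Z_\beta:=M_\beta(\Gamma^{b,s})$. Writing $\widehat\beta=\sqrt{s/b}\,\beta$, the recursion reads $Z_{\widehat\beta}\stackrel{d}{=}\frac{1}{b}\sum_{i=1}^b\prod_{j=1}^s Z_\beta^{(i,j)}$, where the $Z_\beta^{(i,j)}$ are i.i.d.\ copies of $Z_\beta$ with $\mathbb{E}[Z_\beta]=1$ by property (I). First I would record the elementary facts that $Z_\beta>0$ almost surely and $\mathbb{E}[Z_\beta]=1$ for every $\beta$. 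For part (i), I would study the Laplace transform or, more robustly, track a fractional moment $\mathbb{E}[Z_\beta^\theta]$ for some fixed $\theta\in(0,1)$: since $x\mapsto x^\theta$ is concave, the product-over-$s$ structure combined with independence should make $\mathbb{E}[Z_{\widehat\beta}^\theta]$ a strict contraction of $\mathbb{E}[Z_\beta^\theta]$ when $s>b$, because concatenating $s$ independent sub-masses multiplies them (and a fractional moment of a product of i.i.d.\ mean-one variables with nontrivial fluctuation is strictly less than one). Iterating the recursion drives $\beta\to\infty$ while the fractional moment decays geometrically, forcing $Z_\beta\to 0$ in probability.

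For part (ii), the natural strategy is to show that as $\beta\to\infty$ the normalized mass $Q_\beta$ localizes onto a single coarse-grained path. I would argue that $\max_{\mathbf{p}\in\Gamma^{b,s}_n}Q_\beta(\mathbf{p})\to 1$ in probability by first treating the top generation $n=1$: the recursion expresses $Z_{\widehat\beta}$ as an average over the $b$ branches of the branch-masses $\prod_{j=1}^s Z_\beta^{(i,j)}$, and part (i) applied one level down shows each branch-mass is itself a product of $s$ sub-masses that each tend to zero, so that in the strong-disorder limit essentially all of the mass concentrates on the single most favorable branch with high probability. Formally I would show that the ratio of the largest branch-mass to the total sum converges to $1$ in probability, then bootstrap the same localization argument down through generations $1,2,\dots,n$ using the independence of sub-copies, since a directed path in $\Gamma^{b,s}_n$ is a concatenation of first-generation choices all the way down. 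Because $n$ is fixed, finitely many applications of the one-level localization suffice.

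The main obstacle I anticipate is making the contraction in part (i) quantitative and uniform enough that it genuinely yields convergence to $0$ rather than merely tightness away from $0$; concavity alone gives $\mathbb{E}[Z_{\widehat\beta}^\theta]\le\mathbb{E}[Z_\beta^\theta]$, but I need \emph{strict} geometric decay, which requires a lower bound on the fluctuation of $Z_\beta$ (equivalently, that $Z_\beta$ is genuinely non-degenerate, not concentrating at its mean). I would secure this by computing or bounding $\mathrm{Var}(Z_\beta)$ from below — the variance recursion induced by part (iii), together with the fact that the intersection-time second moment from Proposition~\ref{PropPathInter}(i) is strictly positive, should show the variance stays bounded away from zero (indeed grows) along the recursion when $s>b$. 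A secondary technical point is that the recursion naturally advances $\beta$ in discrete multiplicative steps of $\sqrt{s/b}$, so to obtain the continuous limit $\beta\to\infty$ I would need a monotonicity or interpolation argument showing $Z_\beta$ is, in an appropriate stochastic-ordering sense, decreasing in $\beta$; this can be handled by coupling through the underlying white noise, realizing $M_\beta$ for all $\beta$ on the common field $\mathbf{W}$ and using the explicit exponential dependence on $\beta$ together with the martingale approximations $M_\beta^{(n)}$ referenced after Proposition~\ref{ThmExist}.
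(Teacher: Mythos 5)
Your plan (run the self-similar recursion of Theorem~\ref{ThmProp}(iii) through fractional moments) is the natural first idea, but both parts have genuine gaps, and in each case the missing ingredient is the same one: a change-of-measure (tilting) estimate, which is what the paper's proof is actually built on, with no recursion at all.

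For part (i): subadditivity of $x\mapsto x^\theta$ applied to the recursion gives $\mathbb{E}\big[Z_{\widehat\beta}^\theta\big]\le b^{1-\theta}\big(\mathbb{E}[Z_\beta^\theta]\big)^s$, and this is a contraction only once $\mathbb{E}[Z_\beta^\theta]$ already lies below the threshold $b^{-(1-\theta)/(s-1)}<1$; above that threshold the bound moves in the wrong direction, and concavity by itself only gives $\mathbb{E}[Z_\beta^\theta]\le 1$, not monotonicity along the recursion. So the whole burden is to show the fractional moment drops below this explicit threshold for some $\beta$, and your proposed tool --- a lower bound on $\mathrm{Var}(Z_\beta)$ --- cannot deliver that: a mean-one random variable can have arbitrarily large variance while $\mathbb{E}[Z^\theta]$ stays arbitrarily close to $1$ (put mass $\epsilon$ on a huge value and mass $1-\epsilon$ just below $1$; the variance blows up as $\epsilon\to 0$ while the fractional moment tends to $(1-\delta)^\theta$). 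Non-degeneracy is not the issue; a quantitative estimate is. The paper gets it in one stroke: shift the field by the constant function $h\equiv\lambda$, which by property (III) of Proposition~\ref{ThmExist} multiplies $M_\beta$ uniformly by $e^{\beta\lambda}$ at Girsanov cost $e^{\lambda^2/2}$, and Cauchy--Schwarz then yields $\mathbb{E}\big[\sqrt{Z_\beta}\big]\le e^{\frac{1}{2}\lambda\beta+\frac{1}{2}\lambda^2}$, minimized to $e^{-\beta^2/8}$ at $\lambda=-\beta/2$. This also dissolves your secondary worry about $\beta$ advancing in discrete multiplicative steps, since the bound holds for every $\beta$ directly.

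For part (ii): the step ``each branch mass is a product of sub-masses that each tend to zero, so essentially all of the mass concentrates on the single most favorable branch'' is a non sequitur. All $b$ branch masses tend to zero simultaneously; that says nothing about their \emph{ratios}, and if the branch masses were all comparable the maximum of $Q_\beta$ over $\Gamma^{b,s}_1$ would be near $1/b$, not near $1$. What localization requires is anti-concentration on the multiplicative scale: for distinct $\mathbf{p},\mathbf{q}\in\Gamma^{b,s}_n$, the ratio $Q_\beta(\mathbf{p})/Q_\beta(\mathbf{q})$ must exit every fixed window $(\lambda^{-1},\lambda]$ with probability tending to $1$. This is exactly what the paper proves, again by tilting: shifting the field by $\pm\alpha$ on the edges where $\mathbf{p}$ and $\mathbf{q}$ disagree multiplies the ratio by an explicit exponential factor in $\beta\alpha$ at a Girsanov cost bounded uniformly in $\beta$, so Cauchy--Schwarz bounds the probability of landing in one window by a constant times the square root of the probability of landing in any one of order $\log\beta$ disjoint shifted windows; since those probabilities sum to at most $1$, the original probability is $O\big(1/\sqrt{\log\beta}\,\big)$. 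Without an estimate of this type your generation-by-generation bootstrap has nothing to run on --- and it also inherits the gap from part (i), which you invoke ``one level down.''
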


\begin{remark}In particular, part (ii) implies that when $\beta\gg 1$  most of the weight of the measure $M_\beta(dp)$ is concentrated on a single coarse-grained path $\mathbf{p}\in \Gamma^{b,s}_n$.

\end{remark}

\vspace{.3cm}

\noindent \textbf{(E). A Gaussian multiplicative chaos martingale}\vspace{.1cm}\\
I will expand on the structure and properties of the map $Y:\mathcal{H}\rightarrow L^{2}(\Gamma^{b,s},\mu)$.

\begin{proposition}\label{PropYProp}  The linear operator $Y$ is compact and has the following properties:

\begin{enumerate}[(i).]

\item  $Y$ can be decomposed as $Y=UD$ where $U:\mathcal{H}\rightarrow L^{2}(\Gamma^{b,s},\mu)$ is an isometry and $D: \mathcal{H}\rightarrow \mathcal{H}$ is a self-adjoint operator with eigenvalues $\lambda_n= s^{-\frac{n-1}{2}}$ for $n\in \mathbb{N}\cup\{\infty\}$.

\item The null space of $Y$, which I denote by $\mathcal{H}_\infty$ ($\lambda_\infty=0$), is infinite dimensional.  For $2\leq n<\infty$,  the eigenspace $\mathcal{H}_n$ corresponding to the eigenvalue $\lambda_n$ has dimension $(bs)^{n-1}(b-1)$.  The eigenspace corresponding to $\lambda_0=1$ has dimension $b$, which  I    decompose into the one-dimensional space of constant functions, $\mathcal{H}_0$, and an orthogonal complement, $\mathcal{H}_1$.

\item  For $ 1\leq n<\infty$ the space $\mathcal{H}_n$ has an orthogonal basis $f_{(\mathbf{e},\ell)}$    labeled by $ (\mathbf{e},\ell)\in E^{b,s}_{n-1}\times \{1,\ldots, b-1\}$, where the function $f_{(\mathbf{e},\ell)}$ is supported on $\mathbf{e}\subset D^{b,s}$.

\item  $YY^*:L^2\big(\Gamma^{b,s},\mu)\rightarrow L^2\big(\Gamma^{b,s},\mu)$ is a self-adjoint operator with eigenvalues $\widehat{\lambda}_n= s^{-n+1}$ for $n\in \mathbb{N}\cup\{\infty\}$ with eigenspaces having dimension  $b$ for $n=1$ and $(bs)^{n-1}(b-1)$ for $n\geq 2$.  Hence $YY^*$ has Hilbert-Schmidt norm $\|YY^*\|_{HS}= \big(b\frac{s-1}{s-b} \big)^{1/2}$ but is not traceclass.

\end{enumerate}

\end{proposition}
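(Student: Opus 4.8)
The plan is to diagonalize $Y^{*}Y$ explicitly by building an orthogonal eigenbasis of $\mathcal{H}=L^{2}(D^{b,s},\nu)$ adapted to the hierarchical filtration, and then to read off all four parts. Let $\mathcal{A}_{n}\subset\mathcal{H}$ be the $(bs)^{n}$-dimensional subspace of functions that are constant on each bond $\mathbf{e}\in E^{b,s}_{n}$; since $\bigcup_{n}\mathcal{A}_{n}$ is dense and $Y$ is bounded, it suffices to build the basis inside $\bigcup_{n}\mathcal{A}_{n}$. I will use three families: the constant $\mathbf{1}$; for each $\mathbf{e}\in E^{b,s}_{n-1}$ and each $\ell\in\{1,\dots,b-1\}$ a \emph{branch-contrast} $f_{(\mathbf{e},\ell)}$ supported on $\mathbf{e}$, constant on each of the $b$ first-generation sub-branches of $\mathbf{e}$ with branch-values $c_{\ell}\in\mathbb{R}^{b}$, where $c_{1},\dots,c_{b-1}$ is an orthonormal basis of the mean-zero subspace of $\mathbb{R}^{b}$ for the uniform inner product; and for each $\mathbf{e}$ and each of its $b$ sub-branches the $s-1$ \emph{series-contrasts} supported on that sub-branch and mean-zero across its $s$ series-bonds.

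The computations rest on the self-similar action of $Y$: under $\Gamma^{b,s}=\bigcup_{i}\bigtimes_{j}\Gamma^{b,s}_{i,j}$, a path in branch $i$ is a tuple $(p_{1},\dots,p_{s})$ and $(Y\psi)(p)=\tfrac{1}{s}\sum_{j}(Y^{(i,j)}\psi_{i,j})(p_{j})$. From this, $Y\mathbf{1}=\mathbf{1}_{\Gamma}$, and every series-contrast lies in $\ker Y$ since a path traverses its $s$ series-bonds consecutively and averages their mean-zero values to $0$; these span $\mathcal{H}_{\infty}$. For a branch-contrast at level $n$ a direct evaluation gives $(Yf_{(\mathbf{e},\ell)})(p)=s^{-(n-1)}c_{\ell}\big(i^{*}_{\mathbf{e}}(p)\big)\,1_{p\ni\mathbf{e}}=:g_{(\mathbf{e},\ell)}(p)$, where $i^{*}_{\mathbf{e}}(p)$ is the sub-branch of $\mathbf{e}$ taken by $p$ and $s^{-(n-1)}$ is the time spent in $\mathbf{e}$. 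Using $\mu(p\ni\mathbf{e})=s^{n-1}/|E^{b,s}_{n-1}|=b^{-(n-1)}$ and $\tfrac{1}{b}\sum_{i}c_{\ell}(i)^{2}=1$, one gets $\|g_{(\mathbf{e},\ell)}\|^{2}=s^{-2(n-1)}b^{-(n-1)}$ and $\|f_{(\mathbf{e},\ell)}\|^{2}=\nu(\mathbf{e})=(bs)^{-(n-1)}$, so the Rayleigh quotient is $s^{-(n-1)}$, the candidate eigenvalue $\lambda_{n}^{2}$.

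The crux is that the images $\{\mathbf{1}_{\Gamma}\}\cup\{g_{(\mathbf{e},\ell)}\}$ are mutually orthogonal in $L^{2}(\Gamma^{b,s},\mu)$. For two \emph{distinct} bonds of the same generation lying on a common path, the decomposition $\mu=\tfrac{1}{b}\sum_{i}\prod_{j}\mu^{(i,j)}$ applied recursively shows that the sub-branch selections $i^{*}_{\mathbf{e}}$ and $i^{*}_{\mathbf{e}'}$ are conditionally independent and uniform, so the product of the two mean-zero contrasts integrates to $0$; for nested bonds the coarser contrast is constant on the support of the finer one, which again averages to $0$; parallel bonds never share a path. Granting this, $Y$ carries the orthogonal system to an orthogonal system, which is exactly a singular value decomposition: letting $D$ be diagonal in this basis with entries $1$, $s^{-(n-1)/2}$, $0$ gives $D=(Y^{*}Y)^{1/2}$, and normalizing the nonzero images and then extending $U$ over $\ker Y$ by any isometry into the infinite-dimensional space $\ker Y^{*}=\overline{\operatorname{ran}Y}^{\perp}$ (infinite-dimensional because $\dim L^{2}(\Gamma^{b,s}_{n})$ grows doubly-exponentially in $n$ while $\dim\operatorname{ran}(Y|_{\mathcal{A}_{n}})$ grows only like $(bs)^{n}$) yields the isometry with $Y=UD$. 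A telescoping count, $1+\sum_{m=1}^{n}(bs)^{m-1}\big[(b-1)+b(s-1)\big]=(bs)^{n}$, shows the three families span $\mathcal{A}_{n}$, giving completeness and hence (i)--(iii), with $\dim\mathcal{H}_{n}=|E^{b,s}_{n-1}|(b-1)=(bs)^{n-1}(b-1)$ and eigenvalue $1$ of multiplicity $b=\dim(\mathcal{H}_{0}\oplus\mathcal{H}_{1})$; compactness follows since $\lambda_{n}\to 0$ with finite multiplicities.

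For (iv), $YY^{*}$ shares the nonzero spectrum of $Y^{*}Y$ with equal multiplicities, so $\widehat{\lambda}_{n}=\lambda_{n}^{2}=s^{-n+1}$ of multiplicity $b$ for $n=1$ and $(bs)^{n-1}(b-1)$ for $n\geq 2$, the eigenvectors being $\mathbf{1}_{\Gamma}$ and the $g_{(\mathbf{e},\ell)}$. Summing squared eigenvalues against multiplicities gives $\|YY^{*}\|_{HS}^{2}=b+(b-1)\sum_{n\geq 2}(b/s)^{n-1}=b+\tfrac{b(b-1)}{s-b}=b\,\tfrac{s-1}{s-b}$, a geometric series convergent precisely because $s>b$; the corresponding trace $b+(b-1)\sum_{n\geq 2}b^{\,n-1}$ diverges since $b\geq 2$, so $YY^{*}$ is not trace class. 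The main obstacle is the orthogonality-of-images step, i.e.\ verifying the conditional independence of the sub-branch selections at distinct bonds; everything else is bookkeeping on the hierarchical filtration.
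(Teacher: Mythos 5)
Your proposal is correct and takes essentially the same route as the paper's proof: you build exactly the same singular value decomposition, with the branch-contrast functions supported on bonds $\mathbf{e}\in E^{b,s}_{n-1}$ (the paper's $f_{(\mathbf{e},\ell)}$ from Definition~\ref{DefEigen}) as right singular vectors, the same observation that a path spends time $s^{-(n-1)}$ in $\mathbf{e}$ yielding singular values $s^{-(n-1)/2}$, and the same spectral bookkeeping for parts (ii)--(iv), including the geometric-series computation of $\|YY^*\|_{HS}$ and the divergent trace. The only difference is level of detail: you make explicit several points the paper leaves terse --- the series-contrasts spanning $\ker Y$, the orthogonality of the images via conditional independence of sub-branch selections, the telescoping dimension count for completeness, and the isometric extension of $U$ over the kernel --- which fills in rather than alters the argument.
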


\begin{remark} $\oplus_{k=0}^\infty \mathcal{H}_k$ is the orthogonal complement to the space of all $\psi \in  \mathcal{H}= L^2(D^{b,s},\nu) $   such that $\int_{0}^{1}\psi\big(p(r)  \big)dr$ is zero for every path $p\in \Gamma^{b,s}$.

\end{remark}

\begin{definition}\label{DefYNs}
Define $Y^{(n)}:\mathcal{H}\rightarrow L^{2}\big(\Gamma^{b,s},\mu   \big)$ to act as $ Y^{(n)}\psi =\textup{E}\big[ Y\psi   \,\big|\, \textup{F}_n  \big]$, where $\textup{F}_n$ is the $\sigma$-algebra on $\Gamma^{b,s}$  generated by the map $p\mapsto [p]_n$.

\end{definition}

\begin{remark}\label{YRemark}$Y^{(n)}$ can also be written in the following forms:

\begin{itemize}

\item

 $ \big(Y^{(n)}\psi\big)(p)  = \big|\Gamma_n^{b,s}\big|  \int_{\tilde{p}\in [p]_n }   \big(Y\psi\big)(\tilde{p}) \mu(d\tilde{p})$, where  the right side can be understood as the average of $\big(Y\psi\big)(\tilde{p})$ over all $\tilde{p}\in \Gamma^{b,s}$ in the same generation-$n$ equivalence class of $p$.

\item   $
 \big(Y^{(n)}\psi\big)(p)\,=\, \big\langle Y^{(n)}_p, \psi \big\rangle $ for  $Y^{(n)}_p\in \mathcal{H}$ defined as   $Y^{(n)}_p\,:=\, \chi_{ T_p^{(n)}   }/\nu\big( T_p^{(n)}  \big)$,
where $ T_p^{(n)}=\cup_{k=1}^{s^n}[p]_n(k) $, i.e., the generation-$n$ coarse-grained trace of the path $p$ through the space $D^{b,s}$.

\end{itemize}

\end{remark}

\begin{proposition}\label{PropYs} The maps $Y^{(n)}:\mathcal{H}\rightarrow L^{2}\big(\Gamma^{b,s},\mu   \big)$ satisfy the properties below.

\begin{enumerate}[(i).]

\item  Let $\mathbf{P}_n:\mathcal{H}\rightarrow \mathcal{H}$ be the orthogonal projection onto $\oplus_{k=0}^n \mathcal{H}_k$ for $\mathcal{H}_k$  defined as in part (ii) of Proposition~\ref{PropYProp}.
   For any $\psi\in \mathcal{H}$,
$$Y^{(n)}\psi \,=\, Y\mathbf{P}_n \psi   \,. $$

\item  As $n\rightarrow \infty$, the map $Y^{(n)}$ converges in  operator norm to $Y$.

\item  As $n\rightarrow \infty$,  $ Y^{(n)} (Y^{(n)})^*$ converges in  Hilbert-Schmidt norm to $ Y Y^*$, which has integral kernel $K_\Gamma(p,q)=T_{p,q}  $, i.e., the intersection time of the paths.

\item   For any $k\in \mathbb{N}$ and $p \in \Gamma^{b,s}$, $ Y^{(k)}_p -Y^{(k-1)}_p\in \mathcal{H}_{k}$.  In particular, the following sequence of vectors in $ \mathcal{H}$  are orthogonal:
\begin{align}\label{Ortho}
Y^{(0)}_p\,,\,\,\, Y^{(1)}_p -Y^{(0)}_p\,,\,\, \, Y^{(2)}_p -Y^{(1)}_p\,,\,\,\ldots  
\end{align}

\end{enumerate}

\end{proposition}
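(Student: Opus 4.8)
The plan is to reduce the entire proposition to the spectral structure of $Y$ recorded in Proposition~\ref{PropYProp}, together with two measurability facts linking the eigenspaces $\mathcal{H}_k\subset\mathcal{H}$ to the path filtration $\{\textup{F}_n\}$ on $\Gamma^{b,s}$. Writing $Q_n$ for the conditional expectation $\textup{E}[\,\cdot\,|\textup{F}_n]$ on $L^2(\Gamma^{b,s},\mu)$, so that $Y^{(n)}=Q_nY$ by Definition~\ref{DefYNs}, the core of part (i) is the claim that $Y$ transports the eigenspace filtration on $\mathcal{H}$ to the path filtration: (a) for $f\in\mathcal{H}_j$ with $j\le n$, the function $Yf$ is $\textup{F}_n$-measurable, so that $Q_nYf=Yf$; and (b) for $f\in\mathcal{H}_k$ with $k>n$, one has $Q_nYf=0$.

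To prove (a) and (b) I would use the explicit basis $f_{(\mathbf{e},\ell)}$ of Proposition~\ref{PropYProp}(iii): such an $f_{(\mathbf{e},\ell)}\in\mathcal{H}_j$ is supported on the generation-$(j-1)$ bond $\mathbf{e}\in E^{b,s}_{j-1}$ and is constant on each of the $b$ first-generation branches of $\mathbf{e}$ with branch-values summing to zero. Since a path $p$ that traverses $\mathbf{e}$ does so through a single branch of $\mathbf{e}$ while spending total time $s^{-(j-1)}$ there, the value $(Yf_{(\mathbf{e},\ell)})(p)=\int_0^1 f_{(\mathbf{e},\ell)}(p(r))\,dr$ equals $s^{-(j-1)}$ times the value on the branch used, times the indicator that $p$ passes through $\mathbf{e}$. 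This quantity depends on $p$ only through $[p]_j$, giving (a); and when $k>n$ the branch of $\mathbf{e}\in E^{b,s}_{k-1}$ used by $p$ is selected at a generation strictly finer than $n$, so conditioning on $\textup{F}_n$ and averaging against the uniform law of the finer path increments replaces the branch-value by the (vanishing) branch-average, giving (b). Decomposing an arbitrary $\psi=\sum_k\psi_k+\psi_\infty$ into eigencomponents and using $Y\psi_\infty=0$ (since $\mathcal{H}_\infty=\ker Y$ by the remark following Proposition~\ref{PropYProp}), facts (a) and (b) combine to give $Y^{(n)}\psi=\sum_{k\le n}Y\psi_k=Y\mathbf{P}_n\psi$, which is part (i).

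Parts (ii)--(iv) then follow formally. For (ii), part (i) gives $Y-Y^{(n)}=Y(\mathbf{1}-\mathbf{P}_n)$, which annihilates $\oplus_{k\le n}\mathcal{H}_k$ and $\mathcal{H}_\infty$ and acts on $\mathcal{H}_k$ $(k>n)$ with norm $\lambda_k$; as the $\lambda_k=s^{-(k-1)/2}$ are decreasing, $\|Y-Y^{(n)}\|=\lambda_{n+1}=s^{-n/2}\to0$. For (iii), part (i) and self-adjointness of $\mathbf{P}_n$ give $Y^{(n)}(Y^{(n)})^*=Y\mathbf{P}_nY^*$, whence $YY^*-Y^{(n)}(Y^{(n)})^*=Y(\mathbf{1}-\mathbf{P}_n)Y^*$; writing $Y=UD$ with $U$ an isometry and $D$ diagonal on the $\mathcal{H}_k$, a short orthogonality computation yields $\|Y(\mathbf{1}-\mathbf{P}_n)Y^*\|_{HS}^2=\sum_{k>n}\lambda_k^4\dim\mathcal{H}_k=(b-1)\sum_{k>n}(b/s)^{k-1}$, a convergent geometric tail precisely because $s>b$, so it tends to $0$; the identification of the kernel of $YY^*$ with $T_{p,q}$ is the intersection-time identity $\langle Y_p,Y_q\rangle=T_{p,q}$ from part (C). For (iv), part (i) gives $\langle Y^{(k)}_p-Y^{(k-1)}_p,\psi\rangle=\big(Y(\mathbf{P}_k-\mathbf{P}_{k-1})\psi\big)(p)$, and since $\mathbf{P}_k-\mathbf{P}_{k-1}$ is the orthogonal projection onto $\mathcal{H}_k$, this pairing vanishes whenever $\psi\perp\mathcal{H}_k$; hence $Y^{(k)}_p-Y^{(k-1)}_p\in\mathcal{H}_k$ for $k\ge1$ and $Y^{(0)}_p\in\mathcal{H}_0$, and the orthogonality of the sequence~(\ref{Ortho}) follows from the mutual orthogonality of the eigenspaces $\{\mathcal{H}_k\}_{k\ge0}$.

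The main obstacle is the pair of measurability statements (a) and (b) underlying part (i); once the precise branch structure of the eigenfunctions $f_{(\mathbf{e},\ell)}$ is in hand, (a) is immediate, but (b) requires the observation that conditioning on $\textup{F}_n$ and integrating the sign-balanced eigenfunction against the uniform law of the finer path increments produces a vanishing branch-average. The remainder of the proposition is bookkeeping with the eigenvalues $\lambda_k$ and the dimensions $\dim\mathcal{H}_k$.
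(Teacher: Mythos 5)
Your parts (i), (ii), and (iv) are correct and essentially reproduce the paper's own argument. The paper proves (i) by applying $\textup{E}[\,\cdot\,|\,\textup{F}_n]$ to the rank-one expansion of $Y$ in the basis $f_{(\mathbf{e},\ell)}$ of Definition~\ref{DefEigen}: the conditional expectation fixes $\widehat{f}_{(\mathbf{e},\ell)}$ when $\mathbf{e}$ has generation $<n$ and annihilates it otherwise, which is exactly your facts (a) and (b), including the vanishing branch-average that drives (b). Parts (ii) and (iv) then follow by the same spectral bookkeeping in both treatments, and your Hilbert--Schmidt computation $\|Y(\mathbf{1}-\mathbf{P}_n)Y^*\|_{HS}^2=(b-1)\sum_{k>n}(b/s)^{k-1}$ agrees with the paper's value $(b/s)^{n}\,s(b-1)/(s-b)$.

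The one genuine gap is the final claim of part (iii), the identification of the integral kernel of $YY^*$ with the intersection time $T_{p,q}$. You justify it by citing ``the intersection-time identity $\langle Y_p, Y_q\rangle=T_{p,q}$ from part (C),'' but in part (C) that identity is only formal: $Y_p$ is not an element of $\mathcal{H}$ (indeed, the proof of part (i) of Theorem~\ref{ThmProp} relies on $Y_p\notin\mathcal{H}$ to get singularity), so the pairing $\langle Y_p,Y_q\rangle$ has no a priori meaning and cannot be invoked as an established fact. The paper closes exactly this step with an approximation argument that your setup already supports: since $Y^{(n)}_p=\chi_{T_p^{(n)}}/\nu\big(T_p^{(n)}\big)$ is an honest element of $\mathcal{H}$, the operator $Y^{(n)}(Y^{(n)})^*$ has kernel $K^{(n)}(p,q)=\langle Y^{(n)}_p,Y^{(n)}_q\rangle=(b/s)^n N^{(n)}_{p,q}=\mathbf{m}_n(p,q)$, and part (i) of Proposition~\ref{PropPathInter} (almost sure martingale convergence together with convergence of second moments) gives $\mathbf{m}_n\rightarrow T_{p,q}$ in $L^{2}\big(\Gamma^{b,s}\times\Gamma^{b,s},\mu\times\mu\big)$; combined with your Hilbert--Schmidt convergence $Y^{(n)}(Y^{(n)})^*\rightarrow YY^*$, this identifies the limit kernel as $T_{p,q}$. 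With that substitution your proof is complete and coincides with the paper's.
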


\begin{proposition}\label{PropGMCApprox} Define  $\mathcal{F}_{n}$ to be the  $\sigma$-algebra on $\Omega$ generated by the field variables $\langle \mathbf{W}, \psi\rangle$ for $\psi\in \oplus_{k=0}^n \mathcal{H}_k $.  Let $M^{(n)}_\beta$  be the GMC measure over the finite-dimensional field $(\mathbf{W}, \beta Y^{(n)})$, i.e., with Radon-Nikodym derivative $$\frac{d M^{(n)}_\beta}{d\mu }=\textup{exp}\Big\{ \beta \langle \mathbf{W}, Y_p^{(n)} \rangle -\frac{\beta^2}{2}\| Y_p^{(n)} \|_{\mathcal{H}}^2   \Big\}\,.$$
The sequence of measures $\big\{M^{(n)}_\beta\big\}_{n\in\mathbb{N}}$ forms a martingale with respect to the filtration  $\mathcal{F}_{n}$ and  a.s.\ converges vaguely  to the GMC measure $M_\beta$.

\end{proposition}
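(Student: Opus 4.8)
The plan is to verify the martingale property by a direct Gaussian computation, to extract an a.s.\ vague limit from nonnegative martingale convergence, and finally to identify that limit with $M_\beta$ through Shamov's framework. First I would record measurability: since $Y^{(n)}_p\in\oplus_{k=0}^n\mathcal{H}_k$ by Proposition~\ref{PropYs}, the field variable $\langle\mathbf{W},Y^{(n)}_p\rangle$ is $\mathcal{F}_n$-measurable for each $p$, while $\|Y^{(n)}_p\|_{\mathcal{H}}^2$ is deterministic; hence $p\mapsto dM^{(n)}_\beta/d\mu(p)$ and each $M^{(n)}_\beta(f)=\int f\,dM^{(n)}_\beta$ are $\mathcal{F}_n$-measurable. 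For the martingale identity I would condition on $\mathcal{F}_n$ and use the orthogonal increment $Y^{(n+1)}_p-Y^{(n)}_p\in\mathcal{H}_{n+1}$ from Proposition~\ref{PropYs}(iv). Because $\mathcal{H}_{n+1}\perp\oplus_{k=0}^n\mathcal{H}_k$, the Gaussian variable $\langle\mathbf{W},Y^{(n+1)}_p-Y^{(n)}_p\rangle$ is independent of $\mathcal{F}_n$ with variance $\|Y^{(n+1)}_p-Y^{(n)}_p\|_{\mathcal{H}}^2$, whereas $\langle\mathbf{W},Y^{(n)}_p\rangle$ is $\mathcal{F}_n$-measurable and $\|Y^{(n+1)}_p\|_\mathcal{H}^2=\|Y^{(n)}_p\|_\mathcal{H}^2+\|Y^{(n+1)}_p-Y^{(n)}_p\|_\mathcal{H}^2$. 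Evaluating the Gaussian Laplace transform of the increment then yields $\mathbb{E}\big[dM^{(n+1)}_\beta/d\mu(p)\mid\mathcal{F}_n\big]=dM^{(n)}_\beta/d\mu(p)$ for every $p$; integrating against a test function via conditional Fubini gives $\mathbb{E}\big[M^{(n+1)}_\beta(f)\mid\mathcal{F}_n\big]=M^{(n)}_\beta(f)$. In particular $\mathbb{E}\big[dM^{(n)}_\beta/d\mu(p)\big]=1$, so $\mathbb{E}[M^{(n)}_\beta]=\mu$ for all $n$, recovering property (I) at every level.

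Next, for each nonnegative $f\in C(\Gamma^{b,s})$ the sequence $M^{(n)}_\beta(f)$ is a nonnegative martingale and hence converges a.s.\ to a finite limit. Exploiting the separability of $C(\Gamma^{b,s})$ (the path space being a compact metric space), I would fix a countable convergence-determining family, secure simultaneous a.s.\ convergence on a single full-probability event, and conclude the existence of a random measure $M_\infty$ with $M^{(n)}_\beta\to M_\infty$ vaguely a.s.; the uniform control needed to pass from the countable family to all of $C(\Gamma^{b,s})$ comes from the a.s.\ convergence of the total masses $M^{(n)}_\beta(\Gamma^{b,s})$.

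The remaining, and principal, step is to identify $M_\infty$ with $M_\beta$. Here I would invoke Shamov's characterization. By Proposition~\ref{PropYs}(i)--(ii) the finite-rank operators $\beta Y^{(n)}=\beta Y\mathbf{P}_n$ converge to $\beta Y$ in operator norm, so the generalized $\mathcal{H}$-valued functions $\beta Y^{(n)}$ converge to $\beta Y$ in the topology under which subcritical GMC is continuous, and each $M^{(n)}_\beta$ is the genuine Radon--Nikodym GMC over the finite-dimensional field $(\mathbf{W},\beta Y^{(n)})$. The limiting random measure therefore satisfies properties (I)--(III) of Proposition~\ref{ThmExist}: adaptedness and the shift identity (III) pass to the limit using $Y\psi=\lim_n Y^{(n)}\psi$ in $L^2(\Gamma^{b,s},\mu)$, and property (I) holds provided the martingale does not lose mass. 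By the uniqueness in Proposition~\ref{ThmExist} (equivalently, the random-shift property of Theorem~\ref{ThmRandomShift}), it then coincides with $M_\beta$.

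I expect the main obstacle to be exactly the verification that the a.s.\ martingale limit is the full Shamov GMC $M_\beta$ rather than a mass-deficient limit, i.e.\ establishing uniform integrability / property (I) in the limit. In the regime where $\beta$ lies below the $L^2$-threshold this is immediate from
\[
\mathbb{E}\big[M^{(n)}_\beta(f)\,M^{(n)}_\beta(g)\big]=\int_{\Gamma^{b,s}}\int_{\Gamma^{b,s}} f(p)\,g(q)\,e^{\beta^2\langle Y^{(n)}_p,\,Y^{(n)}_q\rangle}\,\mu(dp)\,\mu(dq),
\]
together with the convergence of the kernels $\langle Y^{(n)}_p,Y^{(n)}_q\rangle$ to the intersection time $T_{p,q}$ (Proposition~\ref{PropYs}(iii)) and the finiteness of $\int\!\int e^{\beta^2 T_{p,q}}\,\mu(dp)\,\mu(dq)$, which bounds the martingale in $L^2$ and upgrades vague a.s.\ convergence to $L^1$ convergence of $M^{(n)}_\beta(f)$. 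For general subcritical $\beta$, where $L^2$ control fails, I would instead lean on Shamov's stability and construction theorem, matching the paper's data $(\mathbf{W},Y)$ and the random shift of Theorem~\ref{ThmRandomShift} to his hypotheses to conclude $M_\infty=M_\beta$ without mass loss.
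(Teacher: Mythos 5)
Your proposal is correct and is essentially the paper's own argument: the paper proves Propositions~\ref{ThmExist} and~\ref{PropGMCApprox} simultaneously by exactly this route --- the martingale property via the orthogonal increments of Proposition~\ref{PropYs}(iv) (this is Proposition~\ref{PropYField}(i)), uniform second-moment bounds coming from the kernel $e^{\beta^2(b/s)^n N^{(n)}_{p,q}}$ (Proposition~\ref{PropYField}(ii)), a.s.\ vague convergence extracted along a countable dense family of continuous functions on the compact space $\Gamma^{b,s}$, and identification of the limit through properties (I)--(III) together with the uniqueness statement.

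The one point to correct is your closing dichotomy: in this model there is no ``$L^2$-threshold'' strictly below criticality. By Proposition~\ref{PropPathInter}(i) the moment generating function of the intersection time $T_{p,q}$ is finite at \emph{every} argument (it satisfies $\varphi_\infty(t)=\frac{b-1}{b}+\frac{1}{b}\big(\varphi_\infty(\frac{b}{s}t)\big)^s$ with $b/s<1$, so finiteness propagates outward from a neighborhood of the origin), hence $\int\int e^{\beta^2 T_{p,q}}\,\mu(dp)\,\mu(dq)<\infty$ for all $\beta>0$ and the $L^2$ argument you sketch already covers every $\beta$. Your fallback to ``Shamov's stability and construction theorem'' in the regime where $L^2$ control fails is therefore vacuous --- which is fortunate, because as written that step is an undeveloped appeal to an external black box; the paper's proof never needs it, and this everywhere-finite exponential moment of $T_{p,q}$ is precisely the structural feature that Proposition~\ref{PropYField}(ii) records.
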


\vspace{.3cm}

\noindent \textbf{(F). Chaos expansion construction of the GMC measure}\vspace{.1cm}\\
The Gaussian multiplicative chaos $M_\beta$ can also be constructed through chaos expansions analogous to those in~\cite{alberts2}.  The \textit{chaos decomposition} generated by the field $\mathbf{W}$   is 
\begin{align}\label{Dizzle}
L^{2}\big(\Omega,\mathcal{F}(\mathbf{W}),\mathbb{P}\big)\,=\,\bigoplus_{k=0}^{\infty}\mathcal{H}^{:k:}_{\mathbf{W}}\,,
\end{align}
where $\mathcal{H}^{:k:}_{\mathbf{W}}$ is the orthogonal complement of the set $\overline{\mathcal{P}}_{k-1}(\mathcal{H})$ within $\overline{\mathcal{P}}_{k}(\mathcal{H})$ for
$$\mathcal{P}_k(\mathcal{H})\, := \,\big\{ p\big(\langle \mathbf{W}, \psi_1 \rangle,\ldots, \langle \mathbf{W}, \psi_k \rangle     \big) \,\big|\, \text{$p:\R^k\rightarrow \R$ is a degree-$k$ polynomial and }  \psi_1,\ldots ,\psi_k \in \mathcal{H}   \big\} \,. $$
There is a canonical isometry between $\mathcal{H}^{:k:}_{\mathbf{W}}$ and the $k$-fold symmetric tensor,  $\mathcal{H}^{\odot k}$, of $\mathcal{H}$.  This isometry can be expressed as a map from symmetric  functions $f(x_1,\ldots,x_k)$ in $L^{2}\big((D^{b,s})^k,\frac{1}{k!}\nu^k\big)$ to elements of $\mathcal{H}^{:k:}_{\mathbf{W}}$ expressed as stochastic integrals:
\begin{align}
\frac{1}{k!}\int_{ (D^{b,s})^k }f(x_1,\ldots,x_k)\mathbf{W}(x_1)\cdots \mathbf{W}(x_k) \nu(dx_1)\cdots \nu(dx_k)\,.
\end{align}
Intuitively, the integral above is to be understood as over points $(x_1,\ldots, x_k)\in (D^{b,s})^k$ for which the components $x_j$ are distinct. See~\cite[Section 7.2]{Janson} for the general theory of stochastic integrals.  

Let $S$ be a finite subset of $E^{b,s}$ and define $\Gamma^{b,s}_{S}$ as the collection of paths $p\in \Gamma^{b,s}$ such that $S\subset\textup{Range}(p)$.   If $\Gamma^{b,s}_{S}$ is nonempty, define $\mu_{S}$ as the probability measure  
uniformly distributed over paths in $\Gamma^{b,s}_{S}$ (and thus supported on $\Gamma^{b,s}_{S}$). If $\Gamma^{b,s}_{S}=\emptyset$, i.e., there is no path containing all the points in $S$, then $\mu_{S}$ is defined as zero.

\begin{definition}\label{DefRho} For a Borel set $A\subset \Gamma^{b,s}$, define $\rho_{k}(x_1,\ldots,x_k; A)$ as a map from $(D^{b,s})^k$ to $[0,\infty)$  with 
$$\rho_k(x_1,\ldots, x_k; A)\,:=\, \begin{cases} b^{\gamma(\{ x_1,\ldots, x_k \})}\mu_{\{x_1,\ldots,x_k\}}(A)  & \text{ $ x_1,\ldots, x_k \in  E^{b,s}$ are distinct,}   \\  0 & \text{otherwise,}  \end{cases}   $$ where $\gamma (S)$ is defined for a finite subset of $E^{b,s}$ as  $$\ds
 \gamma (S)\,:=\,\sum_{k=0}^{\infty} \Big(|S|\,-\, \big| \big\{\mathbf{e} \in E_{k}^{b,s}\,\big|\, \mathbf{e}\cap S \neq \emptyset  \big\}\big| \Big)\,. $$ 
In the above formula for $\gamma (S)$, elements of $E_{k}^{b,s}$ are to be understood as subsets of $E^{b,s}$, and the $k=0$ term of the sum is always interpreted as $|S|-1$.

\end{definition}

\begin{remark}
The term $ \big| \big\{\mathbf{e} \in E_{k}^{b,s}\,\big|\, \mathbf{e}\cap S \neq \emptyset  \big\}\big|$ counts the number of distinct equivalence classes from $E_k^{b,s}$ corresponding to elements in $S$.

\end{remark}


\begin{theorem}\label{ThmChaosExp} For any Borel set $A\subset \Gamma^{b,s}$, the random variable $M_\beta(\mathbf{W}, A)$ is  equal to the chaos expansion
\begin{align*}
\mu(A)\,+\, \sum_{k=1}^{\infty} \frac{\beta^k}{k!}\int_{(D^{b,s})^k}\rho_k(x_1,\ldots, x_k; A)\mathbf{W}(x_1)\cdots \mathbf{W}(x_k)\nu(dx_1)\cdots \nu(dx_k)\,.
\end{align*}
 The sequence of symmetric functions $\{\rho_k(x_1,\ldots, x_k; A)\}_{k\in \mathbb{N}}$ satisfies
\begin{enumerate}[(i).]
\item $\int_{D^{b,s}}\rho_k\big(x_1,\ldots, x_k; A\big) \nu(dx_k)\,=\,\rho_{k-1}(x_1,\ldots, x_{k-1}; A) $ and
 
\item $\int_{(D^{b,s})^k}\rho_k\big(x_1,\ldots, x_k; A\big)\nu(dx_1)\cdots \nu(dx_k)\,=\,\mu(A)$.
\end{enumerate}

\end{theorem}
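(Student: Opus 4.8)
The plan is to read off the chaos expansion from the finite-dimensional approximations $M_\beta^{(n)}$ of Proposition~\ref{PropGMCApprox}, pass to the limit on the $L^2(\Omega)$ side, and then identify the limiting chaos kernels with the functions $\rho_k$ of Definition~\ref{DefRho} by a direct combinatorial computation. By Remark~\ref{YRemark} we have $Y_p^{(n)}=\chi_{T_p^{(n)}}/\nu\big(T_p^{(n)}\big)=b^{n}\chi_{T_p^{(n)}}$, since $\nu\big(T_p^{(n)}\big)=s^{n}/(bs)^{n}=b^{-n}$, so $dM^{(n)}_\beta/d\mu$ is the Wick exponential $\mathcal{E}(\beta Y_p^{(n)})=\exp\{\beta\langle \mathbf{W},Y_p^{(n)}\rangle-\tfrac{\beta^2}{2}\|Y_p^{(n)}\|^2\}$ of a fixed vector of $\mathcal{H}$. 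The standard chaos expansion of a Wick exponential gives $\mathcal{E}(\beta Y_p^{(n)})=\sum_{k\ge 0}\frac{\beta^k}{k!}I_k\big((Y_p^{(n)})^{\otimes k}\big)$, where $I_k(f):=\int_{(D^{b,s})^k}f(x_1,\dots,x_k)\mathbf{W}(x_1)\cdots\mathbf{W}(x_k)\nu(dx_1)\cdots\nu(dx_k)$. Since the orthogonal projection $\Pi_k$ onto the $k$-th chaos is bounded and commutes with the $\mu$-Bochner integral $\int_A(\cdot)\,\mu(dp)$ (legitimate because $\|Y_p^{(n)}\|^k=b^{nk/2}$ is $\mu$-integrable), I obtain
\[
M^{(n)}_\beta(A)=\sum_{k=0}^{\infty}\frac{\beta^k}{k!}I_k\big(\rho^{(n)}_k(\cdot;A)\big),\qquad \rho^{(n)}_k(x_1,\dots,x_k;A):=\int_A\prod_{j=1}^{k}Y_p^{(n)}(x_j)\,\mu(dp).
\]

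Next I would pass to $n\to\infty$. In the regime of interest $s>b$, Proposition~\ref{PropYProp}(iv) makes $YY^*$ Hilbert--Schmidt, which bounds the partition-function second moments uniformly, so $\{M^{(n)}_\beta(A)\}_n$ is an $L^2$-bounded martingale (Proposition~\ref{PropGMCApprox}) and $M^{(n)}_\beta(A)\to M_\beta(A)$ in $L^2(\Omega)$. Applying the continuous projections $\Pi_k$ together with the isometry $\mathbb{E}[I_k(f)^2]=k!\,\|f\|^2_{L^2((D^{b,s})^k,\nu^k)}$ for symmetric $f$, I deduce that for each $k$ the kernels $\rho^{(n)}_k(\cdot;A)$ converge in $L^2\big((D^{b,s})^k,\nu^k\big)$ to a limit $\bar\rho_k(\cdot;A)$ with $\Pi_k M_\beta(A)=\frac{\beta^k}{k!}I_k(\bar\rho_k)$. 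The remaining task is to identify $\bar\rho_k=\rho_k$.

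For the identification I would compute $\lim_n\rho^{(n)}_k$ pointwise and match it with the $L^2$ limit along a subsequence. Fix distinct $x_1,\dots,x_k\in E^{b,s}$ lying on a common path (otherwise both $\rho_k^{(n)}$ for large $n$ and $\rho_k$ vanish) and set $S=\{x_1,\dots,x_k\}$. For large $n$ each $x_j$ occupies a distinct generation-$n$ edge, so $\prod_j Y^{(n)}_p(x_j)=b^{nk}1_{\{S^{(n)}\subset T_p^{(n)}\}}$ and hence
\[
\rho^{(n)}_k(x_1,\dots,x_k;A)=b^{nk}\,\mu\big(S^{(n)}\subset T_p^{(n)}\big)\cdot \mu\big(A\mid S^{(n)}\subset T_p^{(n)}\big).
\]
The prefactor I would evaluate by iterating the branch-and-segment decomposition of Remark~\ref{RemarkConcat}: sorting $S$ into the groups $S_1,\dots,S_s$ occupying the $s$ segments of the (unique, by compatibility) common first-generation branch yields the exact recursion $b^{nk}\mu(S^{(n)}\subset T_p^{(n)})=b^{|S|-1}\prod_j b^{(n-1)|S_j|}\mu^{(i,j)}(S_j^{(n-1)}\subset T^{(n-1)})$, and the identical recursion $\gamma(S)=(|S|-1)+\sum_{j}\gamma(S_j)$ is satisfied by Definition~\ref{DefRho}; induction on the coalescence depth of $S$ therefore gives $b^{nk}\mu(S^{(n)}\subset T_p^{(n)})=b^{\gamma(S)}$ for all large $n$. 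The conditioning events $\{S^{(n)}\subset T_p^{(n)}\}$ decrease to $\{S\subset\mathrm{Range}(p)\}$, so by the reverse-martingale (downward L\'evy) theorem the conditional laws converge and $\mu\big(A\mid S^{(n)}\subset T_p^{(n)}\big)\to\mu_S(A)$. Thus $\rho^{(n)}_k(\cdot;A)\to b^{\gamma(S)}\mu_S(A)=\rho_k(\cdot;A)$ pointwise $\nu^k$-a.e., forcing $\bar\rho_k=\rho_k$ and establishing the expansion. Finally, properties (i) and (ii) I would inherit from their finite-$n$ analogues: since $\int_{D^{b,s}}Y^{(n)}_p(x)\nu(dx)=\langle Y^{(n)}_p,1\rangle=(Y^{(n)}1)(p)=1$, integrating one variable out of $\rho^{(n)}_k$ returns $\rho^{(n)}_{k-1}$ and the full $k$-fold integral equals $\mu(A)$; as $\nu$ is finite these identities survive the $L^2$ limit.

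The main obstacle is the identification step, and within it the combinatorial evaluation of $b^{nk}\mu(S^{(n)}\subset T_p^{(n)})=b^{\gamma(S)}$: organizing the induction along the generations at which the configuration $S$ separates into distinct sub-copies is exactly where the structure of $\gamma$ genuinely enters, and it must be paired with the convergence of the conditional path measures to the disintegration $\mu_S$. A secondary technical point is securing the uniform-in-$n$ $L^2$ control (equivalently subcriticality, $s>b$) that makes $\{M^{(n)}_\beta(A)\}$ converge in $L^2$, so that the chaos components may be matched term by term.
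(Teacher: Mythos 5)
Your overall architecture coincides with the paper's: expand the finite-rank Wick exponential $dM^{(n)}_\beta/d\mu$ into a chaos series with kernels $\rho^{(n)}_k(x_1,\dots,x_k;A)=\int_A\prod_j Y^{(n)}_p(x_j)\,\mu(dp)$ (this is the paper's Proposition~\ref{PropChaosExp}, via Remark~\ref{RemarkDen}), use the $L^2(\Omega)$ convergence $M^{(n)}_\beta(A)\to M_\beta(A)$ (Proposition~\ref{PropYField}) to match chaos components, and then identify the limiting kernels with $\rho_k$. Your combinatorial evaluation $b^{nk}\mu\big(G^{(n)}_S\big)=b^{\gamma(S)}$ through the recursion $\gamma(S)=(|S|-1)+\sum_j\gamma(S_j)$ is correct, and it is exactly the mean-one property of the density $J^{(n)}_S=b^{n|S|-\gamma(S)}\chi_{G^{(n)}_S}$ that the paper records in Appendix~A.

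The genuine gap is the last step of your identification. You assert that, for a fixed Borel set $A$, the conditional probabilities $\mu\big(A\,\big|\,G^{(n)}_S\big)$ converge to $\mu_S(A)$ ``by the reverse-martingale (downward L\'evy) theorem.'' That theorem governs conditional expectations with respect to a \emph{decreasing sequence of $\sigma$-algebras}; it says nothing about conditioning on a decreasing sequence of \emph{events}, and here $\mu\big(G^{(n)}_S\big)=b^{\gamma(S)-n|S|}\to 0$, so you are conditioning on an asymptotically null event. Moreover the asserted setwise convergence is simply false for some Borel sets: taking $A=\Gamma^{b,s}_S$ gives $\mu\big(A\cap G^{(n)}_S\big)\le\mu\big(\Gamma^{b,s}_S\big)=0$, hence $\mu^{(n)}_S(A)=0$ for every $n$, while $\mu_S(A)=1$. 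So the convergence of $\mu^{(n)}_S$ can only be used in the vague sense, or --- what the paper actually exploits --- as \emph{eventual exact equality on cylinder sets}: for $A\in\mathcal{P}\big(\Gamma^{b,s}_m\big)$ and $n\ge\max(m,N)$ (with $N$ chosen so that the points of $S$ occupy distinct generation-$N$ edges), your own branching recursion applied to $\mu\big(A\cap G^{(n)}_S\big)$ shows that $b^{n|S|}\mu\big(A\cap G^{(n)}_S\big)$ is constant in $n$; equivalently, $J^{(n)}_S$ is a forward mean-one martingale for the coarse-graining filtration. This is precisely Proposition~\ref{PropUnif} and Corollary~\ref{CorUnifConv}, and it identifies $\bar\rho_k(\cdot;A)=\rho_k(\cdot;A)$ for cylinder sets $A$. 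The passage to arbitrary Borel $A$ is then a measure-theoretic extension rather than a limit interchange: $A\mapsto M_\beta(A)$ and $A\mapsto\mu(A)+\sum_k\frac{\beta^k}{k!}I_k\big(\rho_k(\cdot;A)\big)$ are both countably additive $L^2(\Omega)$-valued set functions (the series is dominated by $\|\rho_k(\cdot;\Gamma^{b,s})\|_{L^2(\nu^k)}$, whose weighted square sum is the finite second moment), and two such set functions agreeing on the algebra of cylinder sets agree on all of $\mathcal{B}_\Gamma$. With that replacement your proof goes through; as written, the identification lemma is both mis-stated and mis-justified.
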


\vspace{.3cm}

\noindent \textbf{(G). Scaling limits from non Gaussian variables}\vspace{.1cm}\\
For each $n\in \mathbb{N}$ let $\big\{\omega_{a}^{(n)}\big\}_{a\in E_n^{b,s}}$ be a family of i.i.d.\ random variables indexed by the edge set of the diamond lattice $D_n^{b,s}$.  Assume that the variables have mean zero, variance one, and finite exponential moments: $\mathbb{E}\big[\textup{exp}\big(\beta \omega_a^{(n)}\big)   \big]<\infty$ for $\beta \in \R$.  Define a random measure $\mathbf{M}_{\beta}^{(n)}$  on $ \Gamma^{b,s}_n$ as follows:
\begin{align}
\mathbf{M}_{\beta}^{(n)}(A)  \,=\, \frac{1}{\big|\Gamma^{b,s}_n\big|}\sum_{\mathbf{p}\in A} \prod_{k=1}^{s^n}\frac{ \exp\left\{\beta\omega_{\mathbf{p}(k)}^{(n)}\right\}   }{ \mathbb{E}\Big[\exp\left\{\beta\omega_{\mathbf{p}(k)}^{(n)}\right\} \Big]   } \hspace{.5cm}\text{for}\hspace{.5cm}A\subset \Gamma^{b,s}_n\,.
\end{align}

The theorem below follows as a consequence of Theorem 4.6 of~\cite{US}.
\begin{theorem}\label{ThmUniversal} Fix $N\in \mathbb{N}$ and  let subsets of $\Gamma_N^{b,s}$ be identified with the canonically corresponding subsets of $\Gamma^{b,s}$ and of $\Gamma_n^{b,s}$ for $n>N$.  For $\beta_n:=\beta(b/s)^{n/2}$, the family of random variables $\mathbf{M}_{\beta_n}^{(n)}(A)$ labeled by $A\subset \Gamma_N^{b,s}$ has joint convergence in law as $n\rightarrow \infty$ given by
\begin{align}
\big\{\mathbf{M}_{\beta_n}^{(n)}(A)\big\}_{A\subset \Gamma_N^{b,s}}\hspace{.5cm}\stackrel{\mathcal{L}}{\Longrightarrow} \hspace{.5cm} \big\{ M_{\beta}(A)\big\}_{A\subset \Gamma_N^{b,s}}\,.
\end{align}

\end{theorem}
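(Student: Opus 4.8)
The plan is to reduce the joint convergence of the full family $\{\mathbf{M}^{(n)}_{\beta_n}(A)\}_{A\subset\Gamma_N^{b,s}}$ to a statement about the finitely many ``atoms'' $\mathbf{M}^{(n)}_{\beta_n}(\mathbf{p}_0)$ indexed by the coarse paths $\mathbf{p}_0\in\Gamma_N^{b,s}$. Since every $A\subset\Gamma_N^{b,s}$ is a finite disjoint union of singletons and $\mathbf{M}^{(n)}_{\beta_n}(A)=\sum_{\mathbf{p}_0\in A}\mathbf{M}^{(n)}_{\beta_n}(\mathbf{p}_0)$, the whole family is one fixed linear (hence continuous) function of the atom vector, and the same holds for the putative limit $\{M_\beta(A)\}$. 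By the continuous mapping theorem it therefore suffices to establish joint convergence in law of $\{\mathbf{M}^{(n)}_{\beta_n}(\mathbf{p}_0)\}_{\mathbf{p}_0\in\Gamma_N^{b,s}}$ to $\{M_\beta(\mathbf{p}_0)\}_{\mathbf{p}_0\in\Gamma_N^{b,s}}$.

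Next I would exploit the self-similar structure to factor each atom. Viewing $\mathbf{p}_0$ as a generation-$N$ equivalence class in $\Gamma_n^{b,s}$ via Remark~\ref{RemarkHierarchy}, the $n$-fine paths refining $\mathbf{p}_0$ are obtained by independently choosing, for each of the $s^N$ bonds $\mathbf{e}\in\mathbf{p}_0$, a directed path through the embedded copy of $D^{b,s}_{n-N}$ sitting on $\mathbf{e}$. Because both the Boltzmann product over the $s^n$ fine bonds and the normalization $|\Gamma_n^{b,s}|=|\Gamma_N^{b,s}|\,|\Gamma_{n-N}^{b,s}|^{s^N}$ factor over these embedded copies, I obtain
\[
\mathbf{M}^{(n)}_{\beta_n}(\mathbf{p}_0)=\frac{1}{|\Gamma_N^{b,s}|}\prod_{\mathbf{e}\in\mathbf{p}_0}W_{\mathbf{e}}^{(n-N)},
\]
where $W^{(n-N)}_{\mathbf{e}}$ is the expectation-one total-mass partition function of the disordered polymer on the embedded $D^{b,s}_{n-N}$, and the family $\{W^{(n-N)}_{\mathbf{e}}\}_{\mathbf{e}\in E_N^{b,s}}$ is independent because distinct embedded copies use disjoint blocks of the i.i.d.\ variables $\{\omega^{(n)}_a\}$. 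The crucial bookkeeping is the inverse temperature: writing $\beta_n=\beta(b/s)^{n/2}=[\beta(b/s)^{N/2}]\,(b/s)^{(n-N)/2}$ shows that each $W^{(n-N)}_{\mathbf{e}}$ is exactly a $D^{b,s}_{n-N}$ partition function in the intermediate-disorder scaling of~\cite{US} at effective inverse temperature $\gamma_0:=\beta(b/s)^{N/2}$.

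Then I would invoke Theorem 4.6 of~\cite{US}: each $W^{(n-N)}_{\mathbf{e}}$ converges in law as $n\to\infty$ to the limiting partition function, which in the present formulation is the total mass $M_{\gamma_0}(\Gamma^{b,s})$ of the subcritical GMC at inverse temperature $\gamma_0$. By independence the whole family converges jointly to an i.i.d.\ family $\{Z_{\mathbf{e}}\}_{\mathbf{e}\in E_N^{b,s}}$ of such total masses, and the continuous mapping theorem applied to the product maps above (using that~\cite{US} also controls the moments, hence the uniform integrability, of the $W^{(n-N)}_{\mathbf{e}}$) yields joint convergence of the atom vector to $\big\{\tfrac{1}{|\Gamma_N^{b,s}|}\prod_{\mathbf{e}\in\mathbf{p}_0}Z_{\mathbf{e}}\big\}_{\mathbf{p}_0}$.

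Finally I would identify this limit with $\{M_\beta(\mathbf{p}_0)\}_{\mathbf{p}_0}$. Iterating the distributional self-similarity of Theorem~\ref{ThmProp}(iii) $N$ times expresses $M_\beta$ through its generation-$N$ embedded copies, giving $M_\beta(\mathbf{p}_0)\stackrel{d}{=}\tfrac{1}{|\Gamma_N^{b,s}|}\prod_{\mathbf{e}\in\mathbf{p}_0}M^{(\mathbf{e})}_{\gamma_0}(\Gamma^{b,s})$ for an independent family of embedded GMC's, jointly over all $\mathbf{p}_0$ with shared bonds sharing their factor; this is precisely the structure of the discrete limit, so the two families agree in law. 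I expect the main obstacle to be this identification step, namely reconciling the normalizations and effective temperatures so that the independent partition-function limits reassemble into exactly the GMC $M_\beta$ rather than a reweighting of it; the expectation constraints $\mathbb{E}[M_\beta(\mathbf{p}_0)]=\mu(\mathbf{p}_0)=1/|\Gamma_N^{b,s}|$ and $\mathbb{E}[Z_{\mathbf{e}}]=1$ furnish a useful consistency check. As an alternative route to the identification, one may match the discrete polynomial-chaos expansion of $\mathbf{M}^{(n)}_{\beta_n}(A)$ against the Wiener-chaos expansion of Theorem~\ref{ThmChaosExp}, showing the discrete kernels converge to the $\rho_k(\,\cdot\,;A)$; the invariance principle underlying~\cite{US} then upgrades this to convergence in law.
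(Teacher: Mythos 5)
The paper itself never writes out a proof of Theorem~\ref{ThmUniversal}: it is presented as a direct consequence of Theorem 4.6 of~\cite{US}, the unstated point being that the limit family constructed there is characterized by Wiener chaos expansions which, by Theorem~\ref{ThmChaosExp}, coincide with those of $\{M_\beta(A)\}$, and matching chaos expansions over the same white noise forces matching joint laws. Your argument is a genuinely different, more structural route, and its combinatorial core is correct: the reduction to atoms is legitimate by finite additivity and the continuous mapping theorem; the factorization $\mathbf{M}^{(n)}_{\beta_n}(\mathbf{p}_0)=|\Gamma_N^{b,s}|^{-1}\prod_{\mathbf{e}\in\mathbf{p}_0}W^{(n-N)}_{\mathbf{e}}$ is right (it rests on $|\Gamma_n^{b,s}|=|\Gamma_N^{b,s}|\,|\Gamma_{n-N}^{b,s}|^{s^N}$ and on the disjointness of the disorder variables used by distinct bonds); the temperature bookkeeping $\beta_n=\gamma_0(b/s)^{(n-N)/2}$ with $\gamma_0=\beta(b/s)^{N/2}$ is exactly what makes each factor an intermediate-disorder partition function in the sense of~\cite{US}; and iterating Theorem~\ref{ThmProp}(iii) $N$ times does yield $\{M_\beta(\mathbf{p}_0)\}_{\mathbf{p}_0}\stackrel{d}{=}\{|\Gamma_N^{b,s}|^{-1}\prod_{\mathbf{e}\in\mathbf{p}_0}M^{(\mathbf{e})}_{\gamma_0}(\Gamma^{b,s})\}_{\mathbf{p}_0}$ with i.i.d.\ factors, which is the correct target structure. (The uniform-integrability aside is unnecessary: convergence in law plus continuity of the product map is all you need.) What your route buys is that~\cite{US} is invoked only for a scalar total-mass limit rather than for a joint statement over families of path sets. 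The place where you cannot avoid reconverging with the paper's implicit argument is the identification $Z_{\mathbf{e}}\stackrel{d}{=}M_{\gamma_0}(\Gamma^{b,s})$, which your main route asserts and which you correctly flag as the obstacle: it cannot be extracted from Theorem~\ref{ThmProp}(iii) together with the mean-one normalization, because that recursive characterization is not unique --- the deterministic family $Z\equiv 1$ (i.e., the measure $\mu$ itself) satisfies the same self-similar recursion and the same expectation constraints. So the law of the single factor must be pinned down by an external characterization of the~\cite{US} limit, e.g.\ by matching its chaos expansion against Theorem~\ref{ThmChaosExp}; that is exactly your alternative route, and it is in substance the paper's intended proof. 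With that identification made explicit your argument is complete, and it is rather more informative than the paper's one-line citation.
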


\section{Diamond hierarchical lattice  }\label{SecGraphDef}

In this section I will provide a path-based construction of the diamond hierarchical lattice as a compact metric space.  This discussion  is applicable to any choice of $b,s\in  \{2,3,\ldots\}$. The  proofs of propositions  are in the appendix.

\subsection{Construction of the diamond lattice as a metric space}

The hierarchical formulation of the diamond graph $D_n^{b,s}$ in terms of $b\cdot s$ embedded copies of $D_{n-1}^{b,s}$ carries with it a canonical one-to-one correspondence between  $(\{1,\ldots, b\}\times \{1,\ldots, s\})^n $
and the set of bonds, $E_n^{b,s}$.  I will construct the diamond hierarchical lattice, $D^{b,s}$, as an equivalence relation on the set of sequences $ \mathcal{D}^{\bbf,\sbf}\,:=\,\big( \{1,\ldots, b\}\times \{1,\ldots, s\}\big)^{\infty} $ determined  by a semi-metric $d_D:\mathcal{D}^{\bbf,\sbf}\times \mathcal{D}^{\bbf,\sbf}\rightarrow [0,1]$ defined below.

Let  $\widetilde{\pi}: \mathcal{D}^{\bbf,\sbf}\rightarrow [0,1]$ be the ``projective" map  sending a sequence $x= \{(b_k^x,s_k^x)\}_{k\in \mathbb{N}}$ to
$$\widetilde{\pi}(x)\,:=\, \sum_{k=1}^{\infty} \frac{s_{k}^{x}-1}{ s^{k}}  \,.$$
Of course, the right side above is the base $s$ generalized decimal expansion of the number  $\widetilde{\pi}(x)\in [0,1]$ having $k^{th}$ digit $s_k^x-1$. The root vertices of the continuum  lattice will be  identified with the sets $A:=\{x\in \mathcal{D}^{\bbf,\sbf}\,|\, \widetilde{\pi}(x)=0 \}$ and $B:=\{x\in \mathcal{D}^{\bbf,\sbf}\,|\, \widetilde{\pi}(x)=1 \}$.

For two points  $x=\{(b_k^x,s_k^x)\}_{k\in \mathbb{N}}$ and  $y=\{(b_k^y,s_k^y)\}_{k\in \mathbb{N}}$ in $\mathcal{D}^{\bbf,\sbf}$, I write $x\updownarrow y$ if $x$ or $y$ is contained in $A\cup B$  or  for some $n\in \mathbb{N}$
$$  (b_k^x,s_k^x)=(b_k^y,s_k^y)\,\, \text{ for }\,\, 1\leq k < n-1 \hspace{.5cm}\text{and}\hspace{.5cm} b_n^x=b_n^y \hspace{.5cm} \text{but}\hspace{.5cm}  s_n^x\neq s_n^y \,.    $$
In other terms the sequence of pairs defining $x$ and $y$ disagrees for the first time at  an  $s$-component value.  Intuitively, this means that there exists a directed path going through both  $x$ and $y$.  We then define the semi-metric $d_D$ on $\mathcal{D}^{\bbf,\sbf}$ as the traveling distance
$$d_D(x,y)\,:=\,\begin{cases} \quad  \quad  \big|\widetilde{\pi}(x)-\widetilde{\pi}(y)\big|   & \quad\text{if } x\updownarrow y,  \\ \,\, \displaystyle \inf_{z \in \mathcal{D}^{\bbf,\sbf},\, z\updownarrow x,\, z\updownarrow y  }\Big( d_D(x,z)+d_D(z,y) \Big)  & \quad  \text{otherwise.} \end{cases}     $$
 The semi-metric    is bounded by $1$ since by choosing  appropriate $z\in A$ or $z\in B$ in the infimum above,  I can conclude that $d_D(x,y)\leq \min\big(\widetilde{\pi}(x)+\widetilde{\pi}(y) ,2-\widetilde{\pi}(x)-\widetilde{\pi}(y)\big)$.
 
\begin{definition}\label{DefCDL}
 The \textit{diamond hierarchical lattice}  is defined as 
$$D^{b,s}\, := \, \mathcal{D}^{\bbf,\sbf} /\big(x,y\in \mathcal{D}^{\bbf,\sbf} \text{ with }  d_D(x,y )=0\big)\,.   $$
In future, I will treat the metric $d_D(x,y)$ and the map $\widetilde{\pi}$  as acting on $D^{b,s}$.  
\end{definition}

\begin{remark}\label{RemarkVertex}
 The  vertex set, $V_n^{b,s}$, on the diamond graph $D_n^{b,s}$ is canonically embedded on $D^{b,s}$; see  Appendix~\ref{AppendCG}.   The representation of elements in $D^{b,s}$ by sequences $\{(b_j,s_j)\}_{j\in \mathbb{N}}  $ is unique except for the countable collection of vertices $V^{b,s}:=\bigcup_n V^{b,s}_n $.  
\end{remark}

\begin{remark}\label{RemarkFractal} The self-similar structure of the fractal $D^{b,s}$ can be understood through a family of contractive shift maps $S_{i,j}:\mathcal{D}^{b,s}\rightarrow \mathcal{D}^{b,s}$ for $(i,j)\in \{1,\ldots, b\}\times \{1,\ldots,s\}$ that
 send $x=\{(b_k^x,s_k^x)\}_{k\in \mathbb{N}}$ to $S_{i,j}(x)=y=\{(b_k^y,s_k^y)\}_{k\in \mathbb{N}}$ with $(b_1^y,s_1^y)=(i,j)$ and $(b_k^y,s_k^y)=(b_{k-1}^x,s_{k-1}^x)$ for $k\geq 2$.  The $S_{i,j}$'s are well-defined as functions on  $D^{b,s}$, and map $D^{b,s}$ onto the shrunken subcopies $D^{b,s}_{i,j}$ with
$$\text{ }\hspace{2.3cm} d_D\big(S_{i,j}(x),S_{i,j}(y)\big)\,=\,\frac{1}{s} d_D(x,y) \,, \hspace{1cm} x,y\in D^{b,s} \,. $$

\end{remark}

\begin{proposition}\label{PropCompact}  $(D^{b,s},d_D )$ is a compact metric space with Hausdorff dimension $1+\frac{\log b}{\log s}$.  The vertex set $V^{b,s}$ is dense in $D^{b,s}$.

\end{proposition}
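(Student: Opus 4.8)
The plan is to prove the three assertions — that $d_D$ descends to an honest metric, that the resulting space is compact, and that $\dim_H D^{b,s}=\alpha:=\frac{\log(bs)}{\log s}=1+\frac{\log b}{\log s}$ — essentially independently, together with the (easy) density of $V^{b,s}$. First I would check that $d_D$ is a genuine pseudo-metric on $\mathcal{D}^{b,s}$, so that quotienting out $\{d_D=0\}$ in Definition~\ref{DefCDL} yields a metric. Nonnegativity and symmetry are immediate from the symmetry of the relation $\updownarrow$ and of $|\widetilde{\pi}(x)-\widetilde{\pi}(y)|$. For the triangle inequality the cleanest route is to reinterpret $d_D(x,y)$ as the chain infimum of $\sum_i|\widetilde{\pi}(z_i)-\widetilde{\pi}(z_{i+1})|$ over finite chains $x=z_0\updownarrow z_1\updownarrow\cdots\updownarrow z_k=y$; one verifies that the two-case recursion in the definition computes exactly this chain infimum (the recursive ``otherwise'' branch, unfolded, ranges over all such chains), and a chain infimum satisfies the triangle inequality automatically. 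Passing to the quotient then produces a metric space.

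For compactness I would exploit that $\mathcal{D}^{b,s}=(\{1,\dots,b\}\times\{1,\dots,s\})^{\infty}$ is a countable product of finite sets, hence compact in the product topology. The quotient map $q:\mathcal{D}^{b,s}\to D^{b,s}$ is uniformly continuous: if $x$ and $y$ agree in their first $n$ coordinate pairs, then $x=S_{i_1,j_1}\circ\cdots\circ S_{i_n,j_n}(x')$ and $y=S_{i_1,j_1}\circ\cdots\circ S_{i_n,j_n}(y')$ for the same shift maps, so the scaling identity of Remark~\ref{RemarkFractal} gives $d_D(x,y)=s^{-n}d_D(x',y')\le s^{-n}$. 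Thus basic product neighborhoods map into $d_D$-balls of radius $s^{-n}$, $q$ is continuous, and $D^{b,s}=q(\mathcal{D}^{b,s})$ is compact (and, being compact metric, complete). The same estimate also yields density of $V^{b,s}$: the initial vertex of the level-$n$ subcopy containing a given $x$ lies within distance $s^{-n}$ of $x$, so every neighborhood of $x$ meets $V^{b,s}=\bigcup_n V_n^{b,s}$.

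The Hausdorff-dimension upper bound is a direct covering estimate. The $(bs)^n$ generation-$n$ subcopies cover $D^{b,s}$, and each, being an $n$-fold composition of the $S_{i,j}$ applied to $D^{b,s}$, has diameter $s^{-n}$ (using $\mathrm{diam}\,D^{b,s}=d_D(A,B)=1$). Hence $\sum(\mathrm{diam})^{\alpha}=(bs)^n(s^{-n})^{\alpha}=1$ for every $n$ precisely when $\alpha=\frac{\log(bs)}{\log s}$, since then $s^{\alpha}=bs$. Letting $n\to\infty$ gives $\mathcal{H}^{\alpha}(D^{b,s})\le 1$ and therefore $\dim_H D^{b,s}\le\alpha$.

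The matching lower bound is where I expect the real difficulty, and I would obtain it from the mass distribution principle applied to the uniform measure $\nu$, i.e.\ by showing $\liminf_{r\to 0}\frac{\log\nu(B(x,r))}{\log r}\ge\alpha$ for $\nu$-a.e.\ $x$. The naive bound is too weak: since $\widetilde{\pi}$ is $1$-Lipschitz and pushes $\nu$ forward to Lebesgue measure, $\nu(B(x,r))\le 2r$ only yields local dimension $1$, and this is sharp at high-degree vertices — at $A$ one has $d_D(\cdot,A)=\widetilde{\pi}(\cdot)$, whence $\nu(B(A,r))=r$. So the estimate genuinely fails on $V^{b,s}$, and the almost-everywhere formulation is essential. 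The substance is to show that for typical $x$ the ball $B(x,s^{-n})$ captures measure comparable to the single subcopy $C_n(x)$ containing it, namely $(bs)^{-n}=(s^{-n})^{\alpha}$: the ball can exit $C_n(x)$ only through its two endpoint vertices, and the excess measure accrued near an endpoint of generation $m$ is, by self-similarity, of order $b^{\,n-m}(s^{-n})^{\alpha}$. Writing $n-m$ as the length of the trailing run $s^x_n=\cdots=s^x_{m+1}=1$ in the base-$s$ digits of $x$, a Borel--Cantelli argument gives $n-m=O(\log n)$ for $\nu$-a.e.\ $x$, so the factor $b^{\,n-m}$ is subpolynomial and $\frac{\log\nu(B(x,s^{-n}))}{\log s^{-n}}\to\alpha$. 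The delicate point is controlling this branch-point accrual, in particular ruling out a cascade of contributions through successive vertices under a shrinking distance budget; granting it, Billingsley's lemma yields $\dim_H D^{b,s}\ge\alpha$ and closes the proof.
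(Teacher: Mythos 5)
Your proposal is correct, but it reaches the conclusion by a genuinely different route from the paper. The paper's own proof (Appendix~\ref{AppendCG}) handles compactness by hand --- completeness via the Cauchy sequence of projections $\widetilde{\pi}(x_k)$ plus a case analysis, then sequential compactness by a pigeonhole argument on nested edge closures $\overline{\mathbf{e}}_n$ --- and then gets the Hausdorff dimension in one stroke: the shift maps $S_{i,j}$ of Remark~\ref{RemarkFractal} are similitudes of ratio $1/s$, $D^{b,s}$ is the attractor of this iterated function system, and $O=D^{b,s}\setminus\{A,B\}$ is a separating open set ($\bigcup_{i,j}S_{i,j}(O)\subset O$ with pairwise disjoint images), so the standard Moran--Hutchinson theorem for self-similar sets under the open set condition gives $\dim_H D^{b,s}=\log(bs)/\log s$ immediately. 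Your compactness argument (continuous image of the compact product $\mathcal{D}^{b,s}$ under the quotient map, via the estimate $d_D(x,y)\le s^{-n}$ when the sequences agree to depth $n$) is shorter than the paper's and yields density of $V^{b,s}$ and completeness essentially for free; your dimension argument is longer but self-contained where the paper cites a theorem, and it yields strictly more information: the $\nu$-a.e.\ local dimension of the uniform measure, together with the correct observation that the pointwise statement genuinely fails at vertices (e.g.\ $\nu(B(A,r))=r$). You also address the triangle inequality for $d_D$, which the paper takes for granted; the one substantive point there is that two-step chains through an endpoint vertex of the smallest cylinder containing both points already realize the full chain infimum, which is exactly what your reinterpretation requires.

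The step you flag as delicate --- ruling out a cascade of contributions through successive vertices --- closes with an exact computation you essentially already have in hand. Let $v$ be a vertex of generation $m$ and $r\le s^{-m}$. Any point outside the two generation-$m$ cylinders having $v$ as an endpoint can be reached from $v$ only through the opposite endpoints of those cylinders, which lie at distance exactly $s^{-m}$ from $v$; hence $B(v,r)$ is contained in the union of those two cylinders. Each such cylinder, rescaled by $s^{m}$, is a copy of $(D^{b,s},\nu)$ with $v$ at the root, measure scaled by $(bs)^{-m}$ and distances by $s^{-m}$; since $d_D(\cdot,A)=\widetilde{\pi}(\cdot)$ and $\widetilde{\pi}_{*}\nu$ is Lebesgue measure (Proposition~\ref{PropCylinder} makes the coordinate digits i.i.d.\ uniform), one has $\nu(B(A,\rho))=\rho$ and therefore $\nu\bigl(B(v,r)\bigr)\le 2\,(bs)^{-m}\,s^{m}r=2\,b^{-m}r$. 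This identity already accounts for \emph{all} measure reachable through $v$ within budget $r$, however many further vertices are crossed, so there is no cascade left to estimate. Plugging $r=s^{-n}$ into your decomposition gives $\nu\bigl(B(x,s^{-n})\bigr)\le (s^{-n})^{\alpha}\bigl(1+2b^{\,n-m_1}+2b^{\,n-m_2}\bigr)$, where $m_1,m_2$ are the generations of the endpoints of $C_n(x)$ (note $2b^{-m_i}s^{-n}=2b^{\,n-m_i}(s^{-n})^{\alpha}$, confirming the order you claimed), and your Borel--Cantelli bound $n-m_i=O(\log n)$ for $\nu$-a.e.\ $x$ then finishes the lower bound via Billingsley's lemma.
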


The import of the next proposition is that a probability measure $\nu$ can be placed on $D^{b,s}$ such that subsets identifiable with elements of $E_n^{b,s}$ are assigned measure $ (bs)^{-n} $.

\begin{proposition}\label{PropCylinder} 
Let $\mathcal{B}_D$ be the Borel  $\sigma$-algebra on $D^{b,s}$ generated by the metric $d_D$. There is a unique  measure $\nu$ on $(D^{b,s},\mathcal{B}_D )$ such that $\nu(V^{b,s})=0$ and     for $(b_j,s_j)\in \{1,\ldots, b\}\times \{1,\ldots, s\}$ the  cylinder sets
\begin{align*}
C_{ (b_1,s_1)\times \cdots \times (b_n,s_n)}  \,:=\, \bigg\{x\in  E^{b,s} \,\bigg|\,  x=\{ (b_j^x, s_j^x)\}_{j\in \mathbb{N}}  \,\, \text{with} \,\, b_j^x=b_j \,\,\text{and}\,\, s_j^x=s_j \text{ for } 1\leq j\leq n \bigg\}
\end{align*}
(identifiable with elements in $E_n^{b,s}$)  have measure $\nu\big(C_{ (b_1,s_1)\times \cdots \times (b_n,s_n)}  \big)=|E_n^{b,s}  |^{-1}= (bs)^{-n}$.
\end{proposition}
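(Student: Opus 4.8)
The plan is to build $\nu$ as the pushforward of the natural product measure on the sequence space $\mathcal{D}^{b,s}$ under the quotient map $q:\mathcal{D}^{b,s}\to D^{b,s}$ of Definition~\ref{DefCDL}, and then to extract uniqueness from a Dynkin $\pi$--$\lambda$ argument. First I would equip $\mathcal{D}^{b,s}=(\{1,\dots,b\}\times\{1,\dots,s\})^{\infty}$ with the infinite product (Bernoulli) measure $\tilde\nu$ whose one-dimensional marginals are uniform on the $bs$ pairs $(i,j)$; existence of $\tilde\nu$ is the standard infinite-product measure theorem. Because $D^{b,s}$ is the metric quotient of $\mathcal{D}^{b,s}$ and $q$ is continuous from the product topology to the $d_D$-topology (the same continuity that realizes $D^{b,s}$ as a continuous image of the compact space $\mathcal{D}^{b,s}$ in Proposition~\ref{PropCompact}), the map $q$ is Borel measurable, and I define $\nu:=q_*\tilde\nu$, a Borel probability measure on $(D^{b,s},\mathcal{B}_D)$.

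Next I would check the two defining properties. For the cylinder $C:=C_{(b_1,s_1)\times\cdots\times(b_n,s_n)}$, Remark~\ref{RemarkVertex} guarantees that every non-vertex point has a unique sequence representative, so $q^{-1}(C)$ is exactly the product cylinder in $\mathcal{D}^{b,s}$ fixing the first $n$ coordinates to $(b_1,s_1),\dots,(b_n,s_n)$, with the countably many vertex-representing sequences deleted. That product cylinder has $\tilde\nu$-measure $(bs)^{-n}$ and the deleted set is countable, hence $\tilde\nu$-null, so $\nu(C)=\tilde\nu(q^{-1}(C))=(bs)^{-n}$ as required. For the vertex condition, $q^{-1}(V^{b,s})$ consists of the sequences with non-unique representative together with those in $A\cup B$; this is a countable subset of $\mathcal{D}^{b,s}$, and since $\tilde\nu$ is non-atomic (each singleton has measure $\lim_n (bs)^{-n}=0$) it is $\tilde\nu$-null, giving $\nu(V^{b,s})=0$.

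For uniqueness I would invoke the $\pi$--$\lambda$ theorem. Viewed as subsets of $E^{b,s}=D^{b,s}\setminus V^{b,s}$, two cylinders are either nested or disjoint, so the family $\mathcal{C}$ of all cylinders (together with $\emptyset$ and $D^{b,s}$) is a $\pi$-system. By the contraction identity of Remark~\ref{RemarkFractal}, a generation-$n$ cylinder is the image of $D^{b,s}$ under a composition of $n$ maps $S_{i,j}$ and therefore has $d_D$-diameter at most $s^{-n}$; since these diameters shrink to $0$ and the generation-$n$ cylinders partition $E^{b,s}$, every open set intersected with $E^{b,s}$ is a countable union of cylinders, so $\mathcal{C}$ generates the trace Borel $\sigma$-algebra on $E^{b,s}$. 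If $\nu_1,\nu_2$ both satisfy the stated conditions, they agree on $\mathcal{C}$ and both assign $V^{b,s}$ measure $0$; decomposing an arbitrary Borel set $B$ as $(B\cap E^{b,s})\sqcup(B\cap V^{b,s})$ and applying $\pi$--$\lambda$ on $E^{b,s}$ forces $\nu_1(B)=\nu_2(B)$.

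The main obstacle is not the measure-theoretic bookkeeping but the interface between the two descriptions of the space: I must know that $q$ is continuous and, more critically, that the cylinders generate $\mathcal{B}_D$, i.e.\ that the $d_D$-topology really is the one induced by the nested cylinder partitions. Both facts rest on the diameter bound $s^{-n}$ coming from the self-similar contraction structure (Remark~\ref{RemarkFractal}) and on the confinement of representative non-uniqueness to the countable set $V^{b,s}$ (Remark~\ref{RemarkVertex}); these are properties of the metric-space construction of Section~\ref{SecGraphDef} whose detailed verification I would relegate to the appendix. It is precisely the requirement $\nu(V^{b,s})=0$, together with the cylinder values, that singles out the uniform measure among all Borel extensions.
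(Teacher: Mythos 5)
Your overall strategy (pushforward of the uniform Bernoulli measure for existence, $\pi$--$\lambda$ for uniqueness) is viable, but as written it contains one genuine error, and both of your key computations lean on it: you claim twice that the set of sequences representing vertex points is countable. It is the set of vertex \emph{points} of $D^{b,s}$ that is countable; their fibers under the quotient map $q$ are not. According to Appendix~\ref{AppendCG}, the equivalence class of a vertex $v=(b_1,s_1)\times\cdots\times(b_n,s_n)\in V_n^{b,s}\setminus V_{n-1}^{b,s}$ is $U_v=L_v\cup R_v$ with
\begin{align*}
L_v\,=\,\bigg\{(b_1,s_1)\times\cdots\times(b_n,s_n)\times\prod_{j=1}^{\infty}(\widehat{b}_j,s)\,\bigg|\,\widehat{b}_j\in\{1,\cdots,b\}\bigg\}\,,
\end{align*}
in which the branch indices $\widehat{b}_j$ are completely unconstrained; for $b\geq 2$ this is a copy of $\{1,\cdots,b\}^{\mathbb{N}}$ and is uncountable (the same holds for $R_v$ and for the root classes $A$ and $B$). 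Consequently $q^{-1}(V^{b,s})$, and likewise the ``deleted set'' in your cylinder computation, is uncountable, and the inference ``countable plus non-atomic implies null'' collapses. The conclusion you need is nevertheless true, by a different and equally short argument: a sequence in $L_v$ must have its $s$-component equal to $s$ in every coordinate beyond the $n$-th, so $\tilde\nu(L_v)\leq (bs)^{-n}s^{-m}$ for every $m\in\mathbb{N}$, whence $\tilde\nu(L_v)=0$; similarly $\tilde\nu(R_v)=\tilde\nu(A)=\tilde\nu(B)=0$. Since $V^{b,s}$ is a countable set of points, $q^{-1}(V^{b,s})=A\cup B\cup\bigcup_v U_v$ is a countable union of $\tilde\nu$-null sets, hence null. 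With this substitution, both $\nu\big(C_{(b_1,s_1)\times\cdots\times(b_n,s_n)}\big)=(bs)^{-n}$ and $\nu(V^{b,s})=0$ follow exactly as you intended.

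Once repaired, your route is a genuine alternative to the paper's. The paper works intrinsically on $D^{b,s}$: it sets $\nu$ to zero on $V^{b,s}$, observes (by the same maximal-cylinder decomposition of open sets that you use) that the cylinders form a generating semi-algebra of the trace $\sigma$-algebra on $E^{b,s}$, assigns the values $(bs)^{-n}$ as a premeasure, and invokes the Carath\'eodory extension procedure for both existence and uniqueness. Your construction imports existence and countable additivity wholesale from the infinite product measure theorem---arguably cleaner, since countable additivity of the premeasure on the semi-algebra is asserted rather than verified in the paper---while your $\pi$--$\lambda$ argument is essentially the uniqueness half of the Carath\'eodory--Hahn theorem made explicit. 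Both routes ultimately rest on the same two structural facts: the contraction bound giving generation-$n$ cylinders diameter at most $s^{-n}$, and the confinement of representation non-uniqueness to the countable vertex set.
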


\subsection{Directed paths on the diamond hierarchical lattice}
I define a  \textit{directed path}   on $D^{b,s}$ to be a continuous function $p:[0,1]\rightarrow D^{b,s}$ such that $\widetilde{\pi}\big( p(r)\big)=r $ for all $r\in[0,1]$.  I will use the uniform metric on the set of directed paths:
$$    d_\Gamma\big(p_{1},p_{2}\big) \, = \, \max_{0\leq r\leq 1}d_D\big( p_{1}(r), p_{2}(r)    \big) \hspace{1cm}p_{1},p_{2}\in \Gamma^{b,s}   \,.     $$

\begin{remark}Note that $ d_\Gamma\big(p_{1},p_{2}\big)\, = \,  s^{-(n-1)}      $, where $n\in \mathbb{N}$ is the lowest generation such that there is a  vertex $v\in V_{n}^{b,s}$ in the range of $ p_{1}$ but not of $ p_{2}$. 
\end{remark}

\begin{remark}
 $\Gamma^{b,s}_n$ is canonically identified with an equivalence relation of $\Gamma^{b,s}$ in which $q\equiv_n p$ iff  $[p]_n=[q]_n$, or, equivalently, $d_\Gamma(p,q)\leq s^{-n}$.
\end{remark}

\begin{remark}\label{RemarkMetric}
The metric $d_D$ on $D^{b,s}$ can be  reformulated in terms of the space of directed paths, $\Gamma^{b,s}$, as
$$ d_D(x,y)\,=\,\inf_{ \substack{ p,q\in \Gamma^{b,s}, z\in D^{b,s} \\  z\in \textup{Range}(p)\cap  \textup{Range}(q) }  } \Big(|\widetilde{\pi}(x)- \widetilde{\pi}(z)|+|\widetilde{\pi}(z)- \widetilde{\pi}(y)|  \Big) \,. $$  
\end{remark}

The \textit{uniform measure} on  $\Gamma^{b,s}$  refers to the triple      $\big(\Gamma^{b,s},\mathcal{B}_\Gamma,  \mu\big)$ in the proposition below.

 \begin{proposition}\label{PropUniform} Let $\mathcal{B}_\Gamma $  be the Borel $\sigma$-algebra generated by the metric $d_\Gamma$.  There is a unique measure $\mu$ on $(\Gamma^{b,s},\mathcal{B}_\Gamma)$ satisfying $\mu(\mathbf{p})=|  \Gamma^{b,s}_n |^{-1}=b^{-\frac{s^{n}-1}{s-1}}   $ for all $n\in \mathbb{N}$ and  $\mathbf{p}\in \Gamma^{b,s}_n$.  Moreover, $\mu$ is related to $\nu$  through the following identity: for any $R\in \mathcal{B}_D$
\begin{align}\label{MuToNu} 
  \nu(R)\,=\,   \int_{\Gamma^{b,s}}\int_{[0,1]}1_{p(r)\in R}\,dr\mu(dp)   \,.  
  \end{align}

 \end{proposition}

\begin{remark}\label{RemarkMuToNu} The intuitive meaning of~(\ref{MuToNu}) is that $\nu(R)$ equals the expected amount of time that a random path $p\in \Gamma^{b,s}$ chosen according to the measure $\mu$ will spend in $R\in \mathcal{B}_D $. This implies, more generally, that for any $f\in L^{1}(D^{b,s},\nu)$
$$ \int_{D^{b,s}  }f( x )\nu(dx) \,=\, \int_{\Gamma^{b,s}}\int_{[0,1]}f\big( p(r)\big)dr\mu(dp)\,.  $$

\end{remark}

\section{Proofs}\label{SecProofs}

\subsection{Intersection time between random directed paths   }

The intersections between randomly chosen paths $p,q \in \Gamma^{b,s}$ can be encoded into  realizations of a discrete-time branching process that begins with a single node and for which each generation $n$ node has exactly $s$ children independently with probability $1/b$, or has no children at all. 

Given $p,q\in \Gamma^{b,s}$ recall that $N_{p,q}^{(n)}$ is  the number of bonds shared by the coarse-grained paths $[p]_n,[q]_n\in \Gamma^{b,s}_n$.  For $q\in  \Gamma^{b,s}$ chosen at random (i.e., according to $\mu$), let  $\textup{F}_n^{(q)}$ be the $\sigma$-algebra of subsets of $\Gamma^{b,s}$ generated by $[q  ]_n$.

\begin{proof}[Proof of Proposition~\ref{PropPathInter}]  We can write $I_{p,q}$ as
$$I_{p,q}\, =\,\bigcap_{n=1}^\infty I_{p,q}^{(n)}\hspace{1cm}\text{for}\hspace{1cm} I_{p,q}^{(n)}\,:=\,[0,1]\,-\,\bigcup_{\substack{1\leq k \leq s^n \\  [p]_n(k)\neq [q]_n(k)   }}\Big(\frac{k-1}{s^n}, \frac{k}{s^n}  \Big)  \,. $$
 Let $\frak{p}_{b,s}^{(n)}$ be the probability that the number, $N_{p,q}^{(n)}$,   of bonds shared by $[p]_n $ and $[q]_n$  is not zero.  Then the probability that $N_{p,q}^{(n)}$ never becomes zero is  the limit $\frak{p}_{b,s}^{(n)} \searrow  \frak{p}_{b,s}$.   The probabilities $\frak{p}_{b,s}^{(n)}$ satisfy the recursive relation 
$$ \frak{p}_{b,s}^{(n+1)}\,=\,  G_{b,s}\big( \frak{p}_{b,s}^{(n)}  \big) \hspace{1cm}\text{for}\hspace{1cm} G_{b,s}(x)\,:=\, \frac{1}{b}\Big[ 1\,-\,\big(1-x\big)^s  \Big]    $$
and have initial value $\frak{p}_{b,s}^{(0)}=1$. When $s$ is larger than $b$, the probability $ \frak{p}_{b,s}\in (0,1)$ is the unique attractive fixed point of the map $G_{b,s}:[0,1]\rightarrow [0,1]$.\vspace{.2cm}

\noindent Part (i): The variables $\mathbf{m}_n:=\big(\frac{b}{s}  \big)^{n}N_{p,q}^{(n)}$ form a nonzero, mean-one martingale with respect to the filtration, $\textup{F}_{n}^{(q)}$, generated by $[q]_n$.  Hence, there is an a.s.\ limit $\displaystyle \mathbf{m}_\infty = \lim_{n\rightarrow \infty} \mathbf{m}_n$.  The moment generating functions 
$\varphi_n(t):=\mathbb{E}[e^{t\mathbf{m}_n}   ]$ satisfy the recursive relation
 \begin{align}\label{MomGen}
    \varphi_{n+1}^{b,s}(t)\,=\,\frac{b-1}{b}\,+\,\frac{1}{b}\Big( \varphi_n^{b,s}\Big(\frac{b}{s}t\Big)\Big)^s   \,\hspace{1cm}\text{with}\hspace{1cm}\,   \varphi_0^{b,s}(t)\,=\,e^{t}  \,. 
   \end{align}
 The sequence $\varphi_{n}^{b,s}(t)$ converges pointwise to a nontrivial limit $\varphi_{\infty}^{b,s}(t)$, which is the moment generating function of $\mathbf{m}_\infty$, satisfying  $\varphi_{\infty}^{b,s}(t)=\frac{b-1}{b}+\frac{1}{b}\big( \varphi_\infty^{b,s}\big(\frac{b}{s}t\big)\big)^s $.  Note that the limit of $\varphi_{\infty}^{b,s}(t)$ as $t\rightarrow -\infty$ solves $x=\frac{b-1}{b}\,+\,\frac{1}{b}x^s$, and thus   $\mathbb{P}\big[ \mathbf{m}_\infty = 0\big]=1-\frak{p}_{b,s}$.

\vspace{.3cm}

\noindent Parts (ii) and (iii):   In the event that $N_{p,q}^{(n)}$ is zero for some $n\in \mathbb{N}$, $\mathcal{I}_{p,q}$ is  finite and $p(t)=q(t)\in V^{b,s}$ for $t\in\mathcal{I}_{p,q}$.  Conversely, I will show below that if  $N_{p,q}^{(n)}$ is never zero, then the set $\mathcal{I}_{p,q}$ is a.s.\ infinite  since its dimension-$h$ Hausdorff measure is infinite for any $0<h< \frak{h}$.  This would suffice to prove the proposition since the above remarks show that $N_{p,q}^{(n)}$ becomes zero for large enough $n\in \mathbb{N}$ with probability $1-\frak{p}_{b,s}$.\vspace{.2cm}

 I will split up the analysis between proving $\textup{dim}_{H}(I_{p,q})\leq \frak{h}$  and $\textup{dim}_{H}(I_{p,q})\geq \frak{h}$. To show that $\textup{dim}_H(I_{p,q})\leq \frak{h}$, I will argue that the Hausdorff measure $H_{\frak{h}}(\mathcal{I}_{p,q})$ is a.s.\ finite.   For a given $\delta>0$ pick $n$ with $(\frac{1}{s})^n<\delta$.  Since $\mathcal{I}_{p,q}\subset \mathcal{I}_{p,q}^{(n)}$ and $\mathcal{I}_{p,q}^{(n)}$ is covered by $ N_{p,q}^{(n)}$ intervals of length $1/s^n$,
$$ H_{\frak{h},\delta}(\mathcal{I}_{p,q})\,=\, \inf_{ \substack{ \mathcal{I}_{p,q}\subset \cup_k \mathcal{I}_k \\ |\mathcal{I}_k|\leq \delta  \\  }   } \sum_k  |\mathcal{I}_k|^\frak{h}\,\leq \, N_{p,q}^{(n)}\Big(\frac{1}{s}\Big)^{\frak{h}n}\,=\,\mathbf{m}_{n} \,. $$
Thus,
$$ H_{\frak{h}}(\mathcal{I}_{p,q})\,=\,\lim_{\delta\rightarrow 0} H_{\frak{h},\delta}(\mathcal{I}_{p,q})\,\leq \,\liminf_{n\rightarrow \infty} \,\mathbf{m}_n\,= \,\mathbf{m}_\infty\,.  $$
Therefore $H_{\frak{h}}(\mathcal{I}_{p,q})$ is a.s.\ finite and $\textup{dim}_{H}(I_{p,q})\leq \frak{h}$. \vspace{.2cm}

Next I will condition on the event that $N_{p,q}^{(n)}$ is not zero for any $n\in \mathbb{N}$ and show  that $\textup{dim}_{H}(I_{p,q})\geq \frak{h}$ a.s.  It suffices to  show that $H_{h}(\mathcal{I}_{p,q})>0$ for any $0<h<\frak{h}$.  Let $\mathcal{S}^{(n)}_{p,q}$ be the collection of intervals $[ \frac{k-1}{s^n}, \frac{k}{ s^n}] \subset I_{p,q}^{(n)}$ such that $[ \frac{k-1}{s^n}, \frac{k}{ s^n}]\cap I_{p,q}^{(N)}  $ is not finite for any $N>n$ (in other terms, ancestors of the interval do not go extinct). For a Borel set $A\subset [0,1]$, let $ \mathcal{C}(A)$ be the set of coverings of $A$ by elements in $\cup_{n=1}^{\infty}\mathcal{S}^{(n)}_{p,q}$.   Define the Hausdorff-like measure $\widetilde{H}_{h}$ as 
\begin{align}\label{Ding}
\widetilde{H}_{h}(A)= \lim_{n\rightarrow \infty}\widetilde{H}_{h,\frac{1}{s^n}}(A)\, \hspace{1cm}\text{for}\hspace{1cm} \widetilde{H}_{h,\delta}(A)\,=\, \inf_{ \substack{ \{ \mathcal{I}_k \}\in \mathcal{C}(A)     \\ |\mathcal{I}_k|\leq \delta    }   } \sum_k  |\mathcal{I}_k|^h\,.
 \end{align}
For any Borel $A\subset [0,1]$ we have that
  \begin{align}   \frac{1}{2s^{h}} \widetilde{H}_{h}(A)    \,  \leq \, H_{h}(A)\, \leq \,  \widetilde{H}_{h}(A) \,.
\end{align}
The second inequality above holds since  $\widetilde{H}_{h,1/s^n}(A)$ is defined as an infimum over a smaller collection of coverings than $H_{h,1/s^n}(A)$. The first inequality holds since any interval $I\subset [0,1]$ is covered by two adjacent intervals of the form $\big[\frac{k-1}{s^n},\frac{k}{s^n}   \big]$ for  $n:=\lfloor \log_{1/s}|I| \rfloor$.  Thus, if $\widetilde{H}_{h,1}(\mathcal{I}_{p,q})>0 $ holds a.s.\ then  $H_{h}(\mathcal{I}_{p,q})>0$ holds a.s.

  Let $\widetilde{N}_{p,q}^{(n)}$ be the number elements in $\mathcal{S}^{(n)}_{p,q}$.  Conditioned on the event  that $ N_{p,q}^{(n)}$ is never zero, $\widetilde{N}_{p,q}^{(n)}$ forms a Markov chain taking values in $\mathbb{N}$ with initial value $\widetilde{N}_{p,q}^{(0)}=1$ and satisfying the distributional equality
\begin{align*}
\widetilde{N}_{p,q}^{(n+1)}\,\stackrel{d}{=}\,\sum_{j=1}^{\widetilde{N}_{p,q}^{(n)}  }\mathbf{n}_j\,\hspace{.2cm}\text{for i.i.d.\ variables $ \mathbf{n}_j\in \{1,\ldots, s\} $  with }\hspace{.2cm} \mathbb{P}\big[  \mathbf{n}_j=\ell \big]\,=\,  \binom{s}{\ell}\frac{  \frak{p}_{b,s}^{\ell}(1-\frak{p}_{b,s})^{s-\ell} }{ 1-(1-\frak{p}_{b,s})^{s}   }\,.
\end{align*}
Fix some $0<h<\frak{h}$.  Define the variables 
\begin{align}
L_{p,q,n}\,:=\, \inf_{ \substack{ \{ \mathcal{I}_k \}\in \mathcal{C}(\mathcal{I}_{p,q})     \\ |\mathcal{I}_k|\geq \frac{1}{s^n}  \\  }   } \sum_k  |\mathcal{I}_k|^h \,,
 \end{align}
which have the a.s.\ convergence $ L_{p,q,n}\searrow L_{p,q,\infty}:=\widetilde{H}_{h,1}(\mathcal{I}_{p,q})$.
The hierarchical symmetry of the model implies that the $L_{p,q,n}$'s satisfy the distributional recursion relation
\begin{align}\label{Ys} L_{p,q, n+1}\,\stackrel{d}{=}\, \min\Bigg(1,\,  \sum_{j=1}^{\mathbf{n}} \Big(\frac{1}{s}\Big)^{h}L_{p,q, n}^{(j)}  \Bigg)  \, ,
 \end{align}
where the $ L_{p,q, n}^{(j)}$'s are independent copies of $L_{p,q, n}$ and $\mathbf{n}\in \{1,\ldots, s\}$  is independent of the  $ L_{p,q, n}^{(j)}$'s with $
\mathbb{P}\big[  \mathbf{n}=\ell \big]\,=\,  \binom{s}{\ell}\frac{  \frak{p}_{b,s}^{\ell}(1-\frak{p}_{b,s})^{s-\ell} }{ 1-(1-\frak{p}_{b,s})^{s}   } $.
The distribution of $ L_{p,q, \infty}$ is a fixed point of~(\ref{Ys}).  The probability $x=\mathbb{P}\big[  L_{p,q, \infty}=0  \big]$ satisfies 
$$x\, =\, \frac{ (x\frak{p}_{b,s} +   1-\frak{p}_{b,s})^{s} -(1-\frak{p}_{b,s})^{s}   }{  1-(1-\frak{p}_{b,s})^{s} }   \, ,$$
which has solutions only for $x=0$ and $x=1$.  However, $x=1$ is not possible since a.s.\ convergence $L_{p,q, n}\searrow 0$ as $n\rightarrow \infty$ contradicts~(\ref{Ys}).  To see the rough idea for this, notice that if $0<L_{p,q, n}\ll 1$ with high probability when $n\gg 1$ then the expectation of~(\ref{Ys}) yields 
$$\mathbb{E}\big[L_{p,q, n+1}\big] \,\approx \,  \mathbb{E}\Bigg[  \sum_{j=1}^{\mathbf{n}} \Big(\frac{1}{s}\Big)^{h}L_{p,q, n}^{(j)}  \Bigg] \,=\, \Big( \frac{1}{s} \Big)^{h} \,\mathbb{E}[\mathbf{n}] \,\mathbb{E}\big[L_{p,q, n}\big]\,=\, s^{\frak{h}-h} \,\mathbb{E}\big[L_{p,q, n}\big] \,   $$
because $\mathbb{E}[\mathbf{n}]=\frac{s}{b}= s^{\frak{h}} $.   The above shows that the expectations of $\mathbb{E}\big[L_{p,q, n}\big]$ will contract away from $0$ since $\frak{h}-h>0$.

\end{proof}

\subsection{The compact operator $Y$}

In this section I will prove Propositions~\ref{PropYProp} and~\ref{PropYs}. \vspace{.2cm}

\begin{definition}\label{DefEigen} For $\ell\in \{1,\ldots, b\}$, let  $v^{(\ell)}=(v_1^{(\ell)},\ldots,v_{b}^{(\ell)}) $ be orthonormal vectors in $\R^b$ where $v^{(1)}=\frac{1}{\sqrt{b}}(1,\ldots, 1)$.   Let $\mathbf{p}_1,\ldots, \mathbf{p}_b$ be an enumeration of the elements in $\Gamma_1^{b,s}$, i.e., the branches of $D_1^{b,s}$.

\begin{itemize}

\item  Define  $f^{(\ell)}\in \mathcal{H}$ and  $\widehat{f}^{(\ell)}\in  L^2(\Gamma^{b,s},\mu)  $  for $\ell\in\{1,\ldots, b\}$ as 
$$f^{(\ell)}(x)\,= \sqrt{b}\sum_{i=1}^b  v^{(\ell)}_i\chi_{ \cup_{k=1}^s [\mathbf{p}_i ](k)  } (x)\hspace{.8cm}\text{and}\hspace{.8cm}\widehat{f}^{(\ell)}(p)\,= \sqrt{b}\sum_{i=1}^b  v^{(\ell)}_i\chi_{ \mathbf{p}_i   } (p) \,.  $$

\item For $(\mathbf{e},\ell)\in \cup_{n=0}^\infty E^{b,s}_{n} \times \{1,\ldots,b\}$, define $f_{(\mathbf{e},\ell)}\in \mathcal{H} $ as
\begin{align*}
f_{(\mathbf{e},\ell)}(x)\,=\, (sb)^{\frac{n}{2}}\chi_\mathbf{e}(x)   f^{(\ell)}( x_{\mathbf{e}} )\, ,
\end{align*}
where for $x\in \mathbf{e}$ the point $x_{\mathbf{e}}\in D^{b,s}$ refers to the position of $x$ in the shrunken copy of $D^{b,s}$ corresponding to $\mathbf{e}$.

\item For $(\mathbf{e},\ell)\in \cup_{n=0}^\infty E^{b,s}_{n} \times \{1,\ldots,b\}$, define $\widehat{f}_{(\mathbf{e},\ell)}\in L^2(\Gamma^{b,s},\mu) $ as
\begin{align*}
\widehat{f}_{(\mathbf{e},\ell)}(p)\,=\, b^{\frac{n}{2}}\chi_{\mathbf{e}\cap\textup{Range}(p)\neq \emptyset  }   \widehat{f}^{(\ell)}( p_{\mathbf{e}} )\,,
\end{align*}
where if $\mathbf{e}\cap\textup{Range}(p)\neq \emptyset $ the path $p_{\mathbf{e}}\in \Gamma^{b,s}$ refers to a magnification of the portion of the path $p$ in the shrunken copy of $D^{b,s}$ corresponding to $\mathbf{e}$.

\end{itemize}

\end{definition}

\begin{proof}[Proof of Proposition~\ref{PropYProp}]  It suffices to show that the operator $Y:\mathcal{H}\rightarrow L^2(\Gamma^{b,s},\mu)$ has the form
\begin{align}\label{RepY}
Y\,=\, \big|  \widehat{f}_{(D^{b,s},1)} \big\rangle \big\langle f_{(D^{b,s},1)} \big|  \,+\,  \sum_{k=0}^\infty \sum_{\substack{ \mathbf{e}\in E^{b,s}_{k}   \\  \ell\in \{2,\ldots,b\}   }  } s^{-\frac{k}{2}}\big|  \widehat{f}_{(\mathbf{e},\ell)} \big\rangle \big\langle f_{(\mathbf{e},\ell)} \big| \,. 
\end{align}
 Clearly $Y $ maps   $ f_{(D^{b,s},1)}=1_{D^{b,s}  }$ to $ \widehat{f}_{(D^{b,s},1)}=1_{\Gamma^{b,s}}$.  Pick $\mathbf{e}\in E^{b,s}_{k}$ and $ \ell\in \{2,\ldots,b\} $.  For $p\in \Gamma^{b,s}$,
\begin{align*}  
 \big(Yf_{(\mathbf{e},\ell)}\big)(p) &\,=\,\int_0^1 f_{(\mathbf{e},\ell)}\big( p(r)  \big)dr \\ &\,=\, \int_0^1 (sb)^{\frac{k}{2}}\chi_{\mathbf{e}}\big(p(r)   \big)f^{(\ell)}\big( (p(r))_{\mathbf{e}}  \big)dr \\  & \,=\, \frac{1}{s^k} (sb)^{\frac{k}{2}}\chi_{\mathbf{e}\cap \textup{Range}(p)  \neq \emptyset}f^{(\ell)}\big( p_{\mathbf{e}}  \big)      \,=\,s^{-\frac{k}{2}}\widehat{f}_{(\mathbf{e},\ell)}(p) \,. 
 \end{align*}
The third equality holds since the path $p(r)$ is in $\mathbf{e}$ for a time interval of length $1/s^k$ in the event that $\mathbf{e}\cap \textup{Range}(p)$ is nonempty. 

The orthogonal complement in $\mathcal{H}$ of the space  spanned by the  vectors $f_{(D^{b,s},1)}$ and $ f_{(\mathbf{e},\ell)}$ with  $(\mathbf{e},\ell)\in E^{b,s}_{k}\times \{2,\ldots,b\}$ is comprised of all vectors $\psi$ for which $0=\int_0^1 \psi(p(r))dr$ for all $p\in \Gamma^{b,s}$, which is, by definition, the null space of $Y$.

\end{proof}

\begin{proof}[Proof of Proposition~\ref{PropYs}]

\noindent Part (i): The conditional expectation $\textup{E}\big[\,\cdot\, \big| \,\textup{F}_n\big]$  satisfies
$$\textup{E}\big[\widehat{f}_{(\mathbf{e},\ell)}\,\big|\,\textup{F}_n   \big]\,=\, \begin{cases}  \widehat{f}_{(\mathbf{e},\ell)}   &  \mathbf{e}\in \bigcup_{k=0}^{n-1} E^{b,s}_k\,,   \\  0  & \mathbf{e}\in \bigcup_{k=n}^{\infty} E^{b,s}_k \,\text{and}\,\ell \in \{2,\ldots,b\}    \,.\end{cases}  $$
The result then follows from the form~(\ref{RepY}) of $Y$ since $Y^{(n)}$ has the form
\begin{align*}
Y^{(n)}\,=\, \big|\widehat{f}_{(D^{b,s},1)} \big\rangle \big\langle f_{(D^{b,s},1)} \big|  \,+\,  \sum_{k=0}^{n-1} \sum_{\substack{ \mathbf{e}\in E^{b,s}_{k}   \\  \ell\in \{2,\ldots,b\}   }  } s^{-\frac{k}{2}}\big|  \widehat{f}_{(\mathbf{e},\ell)} \big\rangle \big\langle f_{(\mathbf{e},\ell)} \big|\,=\,Y\mathbf{P}_n \,. 
\end{align*}

\vspace{.3cm}

\noindent Part (ii):  As a consequence of (i), the operator norm of the difference between  $Y^{(n)}$ and $Y$ has the form $\|Y^{(n)}-Y\|_\infty =s^{-n/2}  $.\vspace{.5cm}

\noindent Part (iii):  The operator $Y^{(n)}(Y^{(n)})^*$ can be written in the form
\begin{align*}
Y^{(n)}(Y^{(n)})^*\,=\, \big|\widehat{f}_{(D^{b,s},1)} \big\rangle \big\langle \widehat{f}_{(D^{b,s},1)} \big|  \,+\,  \sum_{k=0}^{n-1} \sum_{\substack{ \mathbf{e}\in E^{b,s}_{k}   \\  \ell\in \{2,\ldots,b\}   }  } s^{-k}\big|  \widehat{f}_{(\mathbf{e},\ell)} \big\rangle \big\langle \widehat{f}_{(\mathbf{e},\ell)} \big| \,. 
\end{align*}
It follows that the Hilbert-Schmidt norm of the difference between $Y^{(n)}(Y^{(n)})^*$ and $Y Y^*$ is $(\frac{b}{s})^{ n/2}\sqrt{ s\frac{b-1   }{ s-b  }  }$.

Next I show that $Y Y^*$ has integral kernel $T(p,q)$.  The map  $(Y^{(n)})^*:L^{2}\big(\Gamma^{b,s},\mu   \big)\rightarrow \mathcal{H} $ sends $f\in  L^{2}\big(\Gamma^{b,s},\mu   \big)$ to 
$$  (Y^{(n)})^* f\,=\, \int_{\Gamma^{b,s}} f(p) Y^{(n)}_p\,    dp \,=\,  b^n\int_{\Gamma^{b,s}}  f(p)\chi_{T_p^{(n)}}   dp    \,. $$
Thus, $Y^{(n)}(Y^{(n)})^*:L^{2}\big(\Gamma^{b,s},\mu   \big)\rightarrow L^{2}\big(\Gamma^{b,s},\mu   \big)  $ has kernel $K^{(n)}(p,q)$
$$ K^{(n)}(p,q)\,=\, \big\langle Y^{(n)}_p, Y^{(n)}_q  \big\rangle \,=\, \frac{b^n}{s^n}\sum_{k=1}^{s^{n}}\mathbf{1}_{ [p]_n(k)= [q]_n(k)  } \,:=\,\mathbf{m}_n     \,. $$
However, $\mathbf{m}_n\equiv \mathbf{m}_n(p,q)$ converges to $\mathbf{m}_\infty(p,q) = T_{p,q}$ in  $L^{2}\big(\Gamma^{b,s}\times \Gamma^{b,s},\mu\times \mu\big)$ by part (i) of Proposition~\ref{PropPathInter}.

\vspace{.5cm}

\noindent Part (iv):  The vectors $Y^{(n)}_p\in \mathcal{H}$ satisfy $Y^{(k)}_p= \mathbf{P}_k Y^{(n)}_p$ for $k\leq n$.  Thus,
\begin{align*}
Y^{(n)}_p\,-\,Y^{(n-1)}_p\,=\, (\mathbf{P}_{n}-\mathbf{P}_{n-1} ) Y^{(n)}_p\,\in\, \mathcal{H}_n\,.
\end{align*}

\end{proof}

\subsection{Existence and uniqueness of the GMC measure for the CDRP}

\subsubsection{GMC martingale}

\begin{proposition}\label{PropYField} Define  $\mathcal{F}_{n}$ to be the  $\sigma$-algebra on $\Omega$ generated by variables $\langle \mathbf{W}, \psi\rangle$ for $\psi\in \oplus_{k=0}^n \mathcal{H}_k $ where $\mathcal{H}_{k}$ is defined as in part (ii) of Proposition~\ref{PropYProp}.
\begin{enumerate}[(i).]
\item  The sequence of random variables $\big\{ e^{\beta \langle \mathbf{W}, Y_p^{(n)} \rangle -\frac{\beta^2}{2}\| Y_p^{(n)} \|_{\mathcal{H}}^2   } \big\}_{n\in \mathbb{N}}$ forms a mean-one martingale with respect to $\mathcal{F}_{n}$.

\item  For any Borel set $A\subset \Gamma^{b,s}$, the sequence of random variables  $\displaystyle \Big\{  \int_{A}e^{\beta \langle \mathbf{W}, Y_p^{(n)} \rangle -\frac{\beta^2}{2}\| Y_p^{(n)} \|_{\mathcal{H}}^2   } \mu(dp) \Big\}_{n\in \mathbb{N}}$ is a mean-$\mu(A)$, square-integrable martingale with respect to $\mathcal{F}_{n}$ that converges a.s.\ to a nonzero limit.

\end{enumerate}

\end{proposition}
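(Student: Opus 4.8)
The plan is to handle both parts through the orthogonal scale decomposition $\mathcal{H}=\oplus_{k=0}^{\infty}\mathcal{H}_k$ together with the independence of the white noise $\mathbf{W}$ across orthogonal subspaces. Write $Z_n(p):=\exp\{\beta\langle \mathbf{W},Y_p^{(n)}\rangle-\tfrac{\beta^2}{2}\|Y_p^{(n)}\|_{\mathcal{H}}^2\}$. The single observation underlying everything is that, by part (i) of Proposition~\ref{PropYs} and self-adjointness of $\mathbf{P}_n$, one has $Y_p^{(n)}=\mathbf{P}_nY_p\in \oplus_{k=0}^{n}\mathcal{H}_k$, so $\langle \mathbf{W},Y_p^{(n)}\rangle$ is $\mathcal{F}_n$-measurable while $\|Y_p^{(n)}\|_{\mathcal{H}}^2$ is deterministic and nondecreasing in $n$.

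For part (i) I would fix $p$ and note that the mean-one property is the Gaussian identity $\mathbb{E}[\exp\{\beta G-\tfrac{\beta^2}{2}\sigma^2\}]=1$ applied to $G=\langle \mathbf{W},Y_p^{(n)}\rangle\sim\mathcal{N}\big(0,\|Y_p^{(n)}\|_{\mathcal{H}}^2\big)$. For the martingale identity I decompose $Y_p^{(n+1)}=Y_p^{(n)}+\big(Y_p^{(n+1)}-Y_p^{(n)}\big)$, where by part (iv) of Proposition~\ref{PropYs} the increment $Y_p^{(n+1)}-Y_p^{(n)}$ lies in $\mathcal{H}_{n+1}$ and is therefore orthogonal to $\oplus_{k=0}^{n}\mathcal{H}_k$. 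Hence $\langle \mathbf{W},Y_p^{(n+1)}-Y_p^{(n)}\rangle$ is a centered Gaussian independent of $\mathcal{F}_n$, and $\|Y_p^{(n+1)}\|_{\mathcal{H}}^2=\|Y_p^{(n)}\|_{\mathcal{H}}^2+\|Y_p^{(n+1)}-Y_p^{(n)}\|_{\mathcal{H}}^2$ by Pythagoras. Factoring $Z_{n+1}(p)=Z_n(p)\exp\{\beta\langle \mathbf{W},Y_p^{(n+1)}-Y_p^{(n)}\rangle-\tfrac{\beta^2}{2}\|Y_p^{(n+1)}-Y_p^{(n)}\|_{\mathcal{H}}^2\}$ and conditioning on $\mathcal{F}_n$ leaves $Z_n(p)$ times the expectation of the second factor, which equals $1$ by the Gaussian identity and the independence.

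For part (ii), joint measurability of $(\omega,p)\mapsto Z_n(\omega,p)$ with Tonelli gives $\mathbb{E}[\int_A Z_n\,d\mu]=\int_A\mathbb{E}[Z_n(p)]\,\mu(dp)=\mu(A)$, and the conditional Fubini theorem (legitimate since $Z_{n+1}\ge 0$ and $\mathbb{E}[\int_A Z_{n+1}\,d\mu]=\mu(A)<\infty$) combined with part (i) yields the martingale property. The crux is the uniform second-moment bound. For fixed $p,q$ the variables $\langle\mathbf{W},Y_p^{(n)}\rangle,\langle\mathbf{W},Y_q^{(n)}\rangle$ are jointly Gaussian with covariance $\langle Y_p^{(n)},Y_q^{(n)}\rangle$, so the bilinear Gaussian identity gives $\mathbb{E}[Z_n(p)Z_n(q)]=\exp\{\beta^2\langle Y_p^{(n)},Y_q^{(n)}\rangle\}$. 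By the kernel computation in the proof of part (iii) of Proposition~\ref{PropYs}, $\langle Y_p^{(n)},Y_q^{(n)}\rangle=K^{(n)}(p,q)=\mathbf{m}_n=T_{p,q}^{(n)}$. Hence $\mathbb{E}[(\int_A Z_n\,d\mu)^2]=\int_A\int_A \exp\{\beta^2T_{p,q}^{(n)}\}\,\mu(dp)\mu(dq)\le \int_{\Gamma^{b,s}}\int_{\Gamma^{b,s}}\exp\{\beta^2T_{p,q}^{(n)}\}\,\mu(dp)\mu(dq)=\varphi_n(\beta^2)$, the last equality because for fixed $p$ the law of $T_{p,q}^{(n)}$ under $\mu(dq)$ is the branching-process law of Proposition~\ref{PropPathInter} with moment generating function $\varphi_n$. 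Since $\varphi_n(\beta^2)$ converges to the finite value $\varphi_\infty(\beta^2)$ by part (i) of Proposition~\ref{PropPathInter}, the sequence $\{\varphi_n(\beta^2)\}$ is bounded, the martingale is $L^2$-bounded, and nonnegative $L^2$-bounded martingale convergence gives a.s.\ and $L^2$ convergence to a limit $M_\infty(A)$ with $\mathbb{E}[M_\infty(A)]=\mu(A)$.

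The part demanding the most care is that the limit is genuinely nonzero. Mean preservation already shows $M_\infty(A)$ is not a.s.\ zero whenever $\mu(A)>0$. To upgrade this to a.s.\ strict positivity for $A=\Gamma^{b,s}$, I would use the self-similar concatenation of Remark~\ref{RemarkConcat}: writing $\theta(\beta)=\mathbb{P}[M_\infty(\Gamma^{b,s})=0]$, decomposing the total mass over the $b$ branches of $s$ independent subcopies gives the relation $\theta(\sqrt{s/b}\,\beta)=[1-(1-\theta(\beta))^s]^b$. Since $M_\beta\to\mu$ as $\beta\to 0$ forces $\theta(0^+)=0$, and $0$ is a superattracting fixed point of $t\mapsto[1-(1-t)^s]^b$ (its derivative vanishes at $0$ since the map is $O(t^b)$ for $b\ge 2$), iterating the relation downward in $\beta$ and invoking continuity of $\theta$ propagates $\theta\equiv 0$ to all $\beta$. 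This fixed-point and $0$–$1$ argument, controlling $\theta$ across the $\beta$-rescaling induced by the fractal self-similarity, is the genuinely delicate step; the martingale and second-moment computations are routine once the identification $\langle Y_p^{(n)},Y_q^{(n)}\rangle=T_{p,q}^{(n)}$ and the exponential-moment bound of Proposition~\ref{PropPathInter} are in hand.
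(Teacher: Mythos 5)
Your proof of part (i) and of the martingale structure and second-moment bound in part (ii) is correct and follows essentially the same route as the paper: orthogonality of the increments $Y_p^{(n+1)}-Y_p^{(n)}$ from part (iv) of Proposition~\ref{PropYs} plus the Gaussian exponential identity for (i), and for (ii) the second moment $\int_{A\times A}e^{\beta^2 T^{(n)}_{p,q}}\,\mu(dp)\mu(dq)$, bounded uniformly in $n$ via part (i) of Proposition~\ref{PropPathInter}. The only cosmetic difference is that you obtain the covariance $\langle Y_p^{(n)},Y_q^{(n)}\rangle = T^{(n)}_{p,q}$ from the kernel computation in Proposition~\ref{PropYs}(iii), whereas the paper factorizes the exponential over the $s^n$ bonds of $[p]_n$ and uses independence of the bond variables; these are the same calculation. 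Note that ``nonzero limit'' in the statement is exactly what $L^2$-boundedness already delivers: uniform integrability preserves the mean, so the limit is not the zero random variable when $\mu(A)>0$. This is all the paper's proof establishes, and your argument is complete at that point.

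Your final paragraph, which upgrades this to almost-sure strict positivity of $M_\infty(\Gamma^{b,s})$, is therefore extra material, and as written it has gaps. Continuity of $\theta(\beta)$ in $\beta$ and the claim that $M_\beta\to\mu$ as $\beta\to 0$ are asserted without justification; both can be avoided by a Paley--Zygmund bound, $\mathbb{P}\big[M_\infty(\Gamma^{b,s})>0\big]\geq 1/\varphi_\infty^{b,s}(\beta^2)\to 1$ as $\beta\to 0$, after which monotonicity of $t\mapsto[1-(1-t)^s]^b$ suffices to iterate the recursion without any continuity of $\theta$. More seriously, the relation $\theta\big(\sqrt{s/b}\,\beta\big)=\big[1-(1-\theta(\beta))^s\big]^b$ rests on the distributional self-similarity of the limit measure, i.e., on Theorem~\ref{ThmProp}(iii), which the paper proves only later using the existence of $M_\beta$ --- itself built on the present proposition. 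To avoid circularity you would need to derive the self-similar recursion at finite level $n$, where it is an exact distributional identity for the partition functions $\int_{\Gamma^{b,s}}Z_n\,d\mu$, and then pass to the limit.
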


\begin{proof}Part (i): If $N>n$, then  $\langle \mathbf{W}, Y_p^{(N)}- Y_p^{(n)} \rangle $ and $\langle \mathbf{W}, Y_p^{(n)} \rangle $ are independent normal random variables since $Y_p^{(N)}- Y_p^{(n)}$ and $Y_p^{(n)}$ are orthogonal elements in $\mathcal{H}$ by  part (iv) of Proposition~\ref{PropYs}.  Thus, we can write
\begin{align*}
e^{\beta \langle \mathbf{W}, Y_p^{(N)} \rangle -\frac{\beta^2}{2}\| Y_p^{(N)} \|_{\mathcal{H}}^2   } \,=\, e^{\beta \langle \mathbf{W}, Y_p^{(N)}- Y_p^{(n)} \rangle -\frac{\beta^2}{2}\| Y_p^{(N)}-Y_p^{(n)} \|_{\mathcal{H}}^2   } \, e^{\beta \langle \mathbf{W}, Y_p^{(n)} \rangle -\frac{\beta^2}{2}\| Y_p^{(n)} \|_{\mathcal{H}}^2   }   \,.
\end{align*}
The conditional expectation with respect to $\mathcal{F}_n$ is $   e^{\beta \langle \mathbf{W}, Y_p^{(n)} \rangle -\frac{\beta^2}{2}\| Y_p^{(n)} \|_{\mathcal{H}}^2   }  $.
  
\vspace{.3cm}
\noindent Part (ii): The sequence $\displaystyle \Big\{  \int_{A}e^{\beta \langle \mathbf{W}, Y_p^{(n)} \rangle -\frac{\beta^2}{2}\| Y_p^{(n)} \|_{\mathcal{H}}^2   } \mu(dp) \Big\}_{n\in \mathbb{N}}$ is a mean-$\mu(A)$ martingale.  We can write
\begin{align*}
\int_{A}e^{\beta \langle \mathbf{W}, Y_p^{(n)} \rangle -\frac{\beta^2}{2}\| Y_p^{(n)} \|_{\mathcal{H}}^2   }\mu(dp)\,=\,\int_{A} \prod_{k=1}^{s^n}e^{ \beta \langle \mathbf{W}, b^n\chi_{[p]_n(k)  } \rangle -\frac{\beta^2}{2}\| b^n\chi_{[p]_n(k) } \|_{\mathcal{H}}^2  }  \mu(dp)  
\end{align*}
Since the $ \langle \mathbf{W}, b^n\chi_{[p]_n(k)  } \rangle$'s are independent, centered normal variables with variance $(b/s)^n$, the second moment of the above is equal to 
\begin{align}\label{Cross}
\int_{A\times A}  e^{\beta^2(\frac{b}{s})^n N_{p,q}^{(n)}   }\mu(dp)\mu(dq)\,,
\end{align}  
where $N_{p,q}^{(n)}  $ is the number of bonds shared by the coarse-grained paths $[p]_n$ and $[q]_n$. By part (i) of Proposition~\ref{PropPathInter}, the sequence  $(\frac{b}{s})^n N_{p,q}^{(n)}$ converges  $\mu\times \mu$-a.e.\  as $n\rightarrow \infty$  to the intersection time $T_{p,q}$, and when $A=\Gamma^{b,s}$ the expression~(\ref{Cross}) converges to the moment generating function, $\mathbb{E}[\textup{exp}(\beta^2T_{p,q})]$. It follows that~(\ref{Cross}) converges to  $\int_{A\times A}  e^{\beta^2T_{p,q}  }\mu(dp)\mu(dq)$ for an arbitrary Borel set $A\in \mathcal{B}_\Gamma$.

\end{proof}

\subsubsection{Martingale limit construction of the GMC measure}

\begin{proof}[Proof of Propositions~\ref{ThmExist} and~\ref{PropGMCApprox}] Recall that $M^{(n)}_\beta$  is defined as the GMC measure on $(\Gamma^{b,s},\mu)$ over the  finite-dimensional field $(\mathbf{W}, \beta Y^{(n)})$:
\begin{align}\label{DefMsubN}
M^{(n)}_\beta(A)\,=\,  \int_{A}e^{\beta \langle \mathbf{W}, Y_p^{(n)} \rangle -\frac{\beta^2}{2}\| Y_p^{(n)} \|_{\mathcal{H}}^2   } \mu(dp)\,,\hspace{1.5cm} A \in  \mathcal{B}_\Gamma \,.
\end{align}
The space $\mathcal{H}_Y=\oplus_{k=0}^\infty \mathcal{H}_k  $ is the orthogonal complement of the null space of $Y$.  Define $\mathcal{F}_\infty$ as the $\sigma$-algebra generated by the variables  $\langle \mathbf{W}, \phi\rangle $ for $\phi \in \mathcal{H}_Y$.\vspace{.2cm}

\noindent \textbf{Existence:} Let $D$ be a  countable subcollection of continuous functions on $\Gamma^{b,s}$ that are dense with respect to the supremum  norm (where, recall, $d_\Gamma$ is the underlying metric on $\Gamma^{b,s}$).   For each $\psi\in D$, the sequence $\big\{\int_{\Gamma^{b,s}}\psi(p) M^{(n)}_\beta(dp)\big\}_{n\in \mathbb{N}}$ is a  martingale w.r.t.\ the filtration $\mathcal{F}_n$ having uniformly bounded second moments as a consequence of Proposition~\ref{PropYField}.  Consequently,  $\big\{\int_{\Gamma^{b,s}}\psi(p) M^{(n)}_\beta(dp)\big\}_{n\in \mathbb{N}}$ converges a.s.\ to a limit in $L^{2}(\Omega, \mathcal{F},\mathbb{P})$.  Thus, the measures $  M^{(n)}_\beta$  a.s.\ converge vaguely to a limit measure $M_\beta$ adapted to the field $\mathbf{W}$, i.e., $M_\beta\equiv M_\beta(\mathbf{W})$.

Properties (I)-(III) follow easily from this construction.  For instance,  to verify property (III) fix some $\phi\in \bigcup_{N=1}^{\infty} \oplus_{k=0}^N \mathcal{H}_k$. Notice that since $  M^{(n)}_\beta(\mathbf{W},dp)$ a.s.\ converges vaguely to $M_\beta(\mathbf{W},dp)$,   I have the a.s.\ equality  
\begin{align}
M_\beta(\mathbf{W}+\phi,dp)  \, =\,& \lim_{n\rightarrow \infty} M_\beta^{(n)}(\mathbf{W}+\phi,dp)\nonumber \\
\, =\,& \lim_{n\rightarrow \infty}e^{\beta\langle Y^{(n)}_p,\phi\rangle}  M_\beta^{(n)}(\mathbf{W},dp)\,. \nonumber 
\intertext{However, since $\phi \in  \oplus_{k=0}^N \mathcal{H}_k $ for some $N$,  I have  $\langle Y^{(n)}_p,\phi\rangle=\langle Y_p,\phi\rangle$ when $n\geq N$, and thus,}
\, =\,& e^{\beta\langle Y_p,\phi\rangle} \lim_{n\rightarrow \infty}  M_\beta^{(n)}(\mathbf{W},dp) \nonumber \\
\, =\,& e^{\beta\langle Y_p,\phi\rangle}  M_\beta(\mathbf{W},dp) \,.
\end{align}
The space $\bigcup_{N=1}^{\infty}\oplus_{k=0}^N \mathcal{H}_k$ is dense in the complement of the null space of $Y$, and  it follows that the above extends to all elements in $\mathcal{H}$.\vspace{.5cm}

\noindent \textbf{Uniqueness:} Next I argue that  $M_\beta$ is the unique  GMC measure over the field $(\mathbf{W},\beta Y)$.  I will reduce the uniqueness of $M_\beta$ to  the uniqueness of the  the GMC measures $M_\beta^{(n)}$  over the finite-dimensional fields $(\mathbf{W},\beta Y^{(n)})$.  Let $\widetilde{M}_\beta$  be a random measure satisfying (I)-(III), and define $\widetilde{M}_\beta^{(n)}$ as the conditional expectation of $\widetilde{M}_\beta  $ w.r.t.\ $\mathcal{F}_n $
\begin{align}\label{MTilde}
 \widetilde{M}_\beta^{(n)}\big(\mathbf{W}^{(n)},A\big)\,=\, \mathbb{E}\big[ \widetilde{M}_\beta\big(\mathbf{W},A\big)\,\big|\,\mathcal{F}_n  \big]\,,  
 \end{align}
where $\mathbf{W}^{(n)} $ refers to the finite-dimensional field of variables $\langle \mathbf{W},\psi\rangle $ for $\psi\in \oplus_{k=0}^n\mathcal{H}_k$.  Since $\mathcal{H}=\oplus_{k=0}^\infty\mathcal{H}_k$, the random variable $\widetilde{M}_\beta\big(\mathbf{W},A\big)$ is recovered as the a.s.\ limit
\begin{align}\label{MRecover}
\widetilde{M}_\beta\big(\mathbf{W},A\big)\,=\,\lim_{n\rightarrow \infty}\mathbb{E}\big[ \widetilde{M}_\beta\big(\mathbf{W},A\big)\,\big|\,\mathcal{F}_n  \big]\,.
\end{align}

 Obviously~(\ref{MTilde}) implies that $\mathbb{E}[ \widetilde{M}_\beta^{(n)}  ]=\mathbb{E}[ \widetilde{M}_\beta  ]=\mu $.  For $\phi\in  \oplus_{k=0}^n\mathcal{H}_k$,
$$ \widetilde{M}_\beta^{(n)}\big(\mathbf{W}^{(n)}+\phi,A\big)\,=\, \mathbb{E}\big[ \widetilde{M}_\beta\big(\mathbf{W}+\phi,A\big)\,\big|\,\mathcal{F}_n  \big]\,=\,e^{\beta\langle Y_p,\phi\rangle   } \mathbb{E}\big[ \widetilde{M}_\beta\big(\mathbf{W},A\big)\,\big|\,\mathcal{F}_n  \big]\,=\,e^{\beta\langle Y_p,\phi\rangle   }\widetilde{M}_\beta^{(n)}\big(\mathbf{W}^{(n)},A\big) \,. $$
If $\widetilde{M}_\beta^{(n)}\equiv  \widetilde{M}_\beta^{(n)} (\mathbf{W})$ is viewed as a function of the entire field $\mathbf{W}$, then 
\begin{align*}
 \widetilde{M}_\beta^{(n)}(\mathbf{W}+\phi,dp)\,=\,  e^{\beta\langle Y_p^{(n)},\phi\rangle   } \widetilde{M}_\beta^{(n)}(\mathbf{W},dp)
\end{align*}
since $ Y_p^{(n)}=P_nY_p$ for the projection $P_n:\mathcal{H}\rightarrow \oplus_{k=0}^n\mathcal{H}_k $. Thus,  $\widetilde{M}_\beta^{(n)}$ is a GMC  measure over the trivial field $(\mathbf{W},\beta Y^{(n)})$ and must be equal to $M_\beta^{(n)}$. From the construction analysis above, $M_\beta^{(n)}$ a.s.\ converges vaguely to a GMC measure $M_\beta$ over the field $(\mathbf{W},\beta Y)$.  From~(\ref{MRecover}) it follows that $M_\beta=\widetilde{M}_\beta$.

\end{proof}

\subsection{Properties of the GMC measure}

In this section I will prove  Theorem~\ref{ThmProp}.

\begin{proof}[Proof of Theorem~\ref{ThmProp}]  Part (i): First I will show that for $\beta>0$ the GMC measure $M_\beta$ is  mutually singular to $\mu$ with probability $0$ or $1$. Let $ M^{(c)}_\beta$ and $M^{(s)}_\beta$ be the continuous and singular components in the  Lebesgue decomposition of $M_\beta$ w.r.t.\ $\mu$.  Since the white noise underlying $M_\beta$ has a uniform covariance measure $\nu$ over $D^{b,s}$, there is statistical uniformity in how the random measure $M_\beta$ weighs  the space of directed paths, $\Gamma^{b,s}$.  This, in particular, implies that the components $M^{(c)}_\beta$ and $M^{(s)}_\beta $ must have uniform expectations, and hence    $\mathbb{E}\big[ M^{(c)}_\beta\big]=\lambda \mu$ and $\mathbb{E}\big[ M^{(s)}_\beta\big]=(1-\lambda) \mu$ for some value $\lambda\in[0,1]$.  However, if $\lambda\in (0,1)$ then the random measures $\lambda^{-1}M^{(c)}_\beta$ and $(1-\lambda)^{-1}M^{(s)}_\beta$ must both be GMC measures satisfying properties (I)-(III) of Proposition~\ref{ThmExist}.  This, however, contradicts the  uniqueness of that GMC measure.  Therefore, $\lambda\in \{0,1\}$.  Suppose to reach a contradiction that $\lambda=1$, i.e., $M_\beta=M^{(c)}_\beta$.  Let $G_\beta(\mathbf{W},p)$ be the Radon-Nikodym derivative of    $M_\beta$ with respect to $\mu$.  For almost every $p$, $G_\beta(\mathbf{W},p)$ is a random variable with finite second moment and for all $\phi\in \mathcal{H}$
\begin{align}
G_\beta(\mathbf{W}+\phi ,p)\,=\,e^{\beta\langle \phi, Y(p)\rangle  }G_\beta(\mathbf{W},p)\,.
\end{align}
However, the above is only possible if $Y(p)\in \mathcal{H}$, which is a contradiction.

 \vspace{.3cm}   
      
\noindent Part (ii):  The intersection time, $T_{p,q}$, of two  random paths $p,q\in \Gamma^{b,s}$ chosen independently according to the measure $M_\beta$ has moment generating function $ \int_{\Gamma^{b,s}\times \Gamma^{b,s}}e^{\alpha T_{p,q}  } M_\beta(dp) M_\beta(dq)$, which has expectation
\begin{align}\label{Char}
\mathbb{E}\bigg[ \int_{\Gamma^{b,s}\times \Gamma^{b,s}}e^{\alpha T_{p,q}  } M_\beta(dp) M_\beta(dq)  \bigg]   \,=\,&\int_{\Gamma^{b,s}\times \Gamma^{b,s}}e^{(\alpha+\frac{1}{2}\beta^2) T_{p,q}  }\mu(dp)\mu(dq)\nonumber  \\ \,=&\,\varphi^{b,s}_{\infty}\big( \alpha+\beta^2/2  \big)\,<\,\infty \,.
\end{align}   
It follows that the set of pairs $(p,q)$ for which the intersection set $I_{p,q}$ has Hausdorff dimension $> \frak{h}$,  and thus for which $T_{p,q}=\infty$, is a.s.\ of measure zero with respect to $M_\beta\times M_\beta$.  The set of pairs $(p,q)$ for which the intersection set $I_{p,q}$ has Hausdorff dimension $< \frak{h}$ satisfy $T_{p,q}=0  $, and
\begin{align*}
\lim_{\alpha\rightarrow -\infty} \int_{\Gamma^{b,s}\times \Gamma^{b,s}} e^{\alpha T_{p,q}  } M_\beta(dp) M_\beta(dq)\, = \, M_\beta \times M_\beta\Big(\big\{(p,q)\,\big|\, T_{p,q}=0   \big\}\Big) \,.
\end{align*}
Taking the expectation 
\begin{align}\label{Dample}
\mathbb{E}\Big[ M_\beta \times M_\beta\Big(\big\{(p,q)\,\big|\, T_{p,q}=0   \big\}\Big)  \Big]\,=\,\lim_{\gamma\rightarrow -\infty}\varphi^{b,s}_\infty(\gamma)\,=\,\frak{p}_{b,s}\,.
\end{align}
However, $\big\{(p,q)\,\big|\, T_{p,q}=0   \big\}$ contains the set of pairs such that the intersection set $I_{p,q}$ is finite and 
\begin{align}\label{Dinkle}
\mathbb{E}\Big[ M_\beta \times M_\beta\Big(\big\{(p,q)\,\big|\, |I_{p,q}|<\infty   \big\}\Big)  \Big]\,=\, \mu \times \mu\Big(\big\{(p,q)\,\big|\, |I_{p,q}|<\infty   \big\}\Big)  \,=\,\frak{p}_{b,s}\,.
\end{align}
The second equality above holds by part (ii) of Proposition~\ref{PropPathInter}.   To see the first equality in~(\ref{Dinkle}), notice that $\big\{(p,q)\,\big|\, |I_{p,q}|<\infty   \big\}$ can be written  as the $n\rightarrow \infty$ limit
$$ \bigcup_{\substack{\mathbf{p},\mathbf{q}\in \Gamma^{b,s}_n \\ \forall(k)\,\mathbf{p}(k)\neq \mathbf{q}(k)  } } \mathbf{p}\times \mathbf{q}  \,\, \nearrow\,\,  \big\{(p,q)\,\big|\, |I_{p,q}|<\infty   \big\}  \,. $$   
Moreover, the random measure $M_\beta$ is independent over sets $\mathbf{p},\mathbf{q}\in \Gamma^{b,s}_n$ for which $\mathbf{p}(k)\neq \mathbf{q}(k) $ for $1\leq k\leq s^n$.

It follows from~(\ref{Dample}) and~(\ref{Dinkle}) that 
\begin{align}
\mathbb{E}\Big[ M_\beta \times M_\beta\Big(\big\{(p,q)\,\big|\, T_{p,q}=0 \text{ and } |\mathcal{I}_{p,q}|=\infty   \big\}\Big)  \Big]\,=\,0\,.
\end{align}
Therefore, $ M_\beta \times M_\beta$ is supported on pairs $(p,q)$ for which either $I_{p,q}$ is finite or has Hausdorff dimension $\frak{h}$.

\vspace{.3cm}

\noindent Part (iii): Let $M_{\beta }^{(i,j)}$ be measurable with respect to independent copies  $ \mathbf{W}^{(i,j)}$ of the field $\mathbf{W}$, and 
define the field $\widetilde{\mathbf{W}}$ such that 
\begin{align*}
\langle \widetilde{\mathbf{W}},\phi\rangle\,:=\,\frac{1}{\sqrt{bs}}\sum_{i=1}^{b}\sum_{j=1}^{s}\langle \mathbf{W}^{(i,j)},\phi^{(i,j)}\rangle
\hspace{1cm}\text{for}\hspace{1cm} \phi \in \widetilde{\mathcal{H}}=\bigoplus_{i=1}^{b}\bigoplus_{j=1}^{s}\mathcal{H}^{(i,j)}\,,
\end{align*}
where $\mathcal{H}^{(i,j)}$ are copies of the Hilbert space $\mathcal{H}=L^2\big(D^{b,s},\mathcal{B}_D,\nu   \big)$.  Define $( \widetilde{\Gamma}^{b,s}, \widetilde{\mu})$ for 
\begin{align}
\widetilde{\Gamma}^{b,s}\,:=\, \bigcup_{i=1}^{b}\bigtimes_{j=1}^s  \Gamma^{b,s}_{i,j} \hspace{1cm}\text{and}\hspace{1cm}  \widetilde{\mu}\,:= \,\frac{1}{b}\sum_{i=1}^b  \prod_{j=1}^s  \mu^{(i,j)}\,.
\end{align}
Finally, define $\widetilde{Y}:\mathcal{H}\longrightarrow L^{2}\big(  \widetilde{\Gamma}^{b,s}, \widetilde{\mu} \big)   $ as
\begin{align*}
\langle \widetilde{Y},\phi\rangle(p)\,=\,\frac{1}{s}\sum_{i=1}^{b}\chi\bigg(p\in \bigtimes_{j=1}^s  \Gamma^{b,s}_{i,j} \bigg)\sum_{j=1}^{s}\big\langle Y^{(i,j)},\phi^{(i,j)}\big\rangle\big(p^{(j)}\big)\,.
\end{align*}
In the above $p_j\in \Gamma_{i,j}^{b,s}$ are components of $p\in \bigtimes_{j=1}^s  \Gamma^{b,s}_{i,j} $.

The computation below shows that $\widetilde{M}= \frac{1}{b}\sum_{i=1}^b  \prod_{j=1}^s  M_{\beta }^{(i,j)}$ defines a GMC measure over the field $(\widetilde{\mathbf{W}},\sqrt{\frac{s}{b}}\beta \widetilde{Y})$:
\begin{align*}
\widetilde{M}\big(\widetilde{\mathbf{W}}+\phi,dp\big)\,=\,& \frac{1}{b}\sum_{i=1}^b  \chi\bigg(p\in \bigtimes_{j=1}^s  \Gamma^{b,s}_{i,j} \bigg)\prod_{j=1}^s  M_{\beta }^{(i,j)}\bigg(\mathbf{W}^{(i,j)}+\frac{\phi^{(i,j)}}{\sqrt{bs}},dp^{(j)}\bigg)\\
\,=\,& \frac{1}{b}\sum_{i=1}^b  \chi\bigg(p\in \bigtimes_{j=1}^s  \Gamma^{b,s}_{i,j} \bigg)e^{\beta\frac{1}{\sqrt{bs}}\sum_{j=1}^{s}\big\langle Y^{(i,j)},\phi^{(i,j)}\big\rangle\big(p^{(j)}\big)      }\prod_{j=1}^s  M_{\beta }^{(i,j)}\big(\mathbf{W}^{(i,j)},dp^{(j)}\big)\\
\,=\,& \frac{1}{b}\sum_{i=1}^b  \chi\bigg(p\in \bigtimes_{j=1}^s  \Gamma^{b,s}_{i,j} \bigg)e^{\sqrt{\frac{s}{b}}\beta\langle \widetilde{Y},\phi\rangle(p)  }\prod_{j=1}^s  M_{\beta }^{(i,j)}\big(\mathbf{W}^{(i,j)},dp^{(j)}\big)\\
\,=\,&e^{\sqrt{\frac{s}{b}}\beta\langle \widetilde{Y},\phi\rangle(p)  }\widetilde{M}\big(\widetilde{\mathbf{W}},dp\big)
\end{align*}
Therefore, the GMC measure $\widetilde{M}$ is equal in law to $M_{\sqrt{\frac{s}{b}}\beta}$.

\end{proof}

\subsection{Strong disorder behavior as $\beta\rightarrow \infty$      }

In this section I will prove Theorem~\ref{ThmDisorder}.  The proof below that $M_{\beta}\big(\mathbf{W},\Gamma^{b,s}\big)$ converges in probability to zero is a straightforward adaption of the argument of Lacoin and Moreno for discrete polymers on diamond lattices in~\cite{Lacoin}.

\begin{proof}[Proof of part (i) of Theorem~\ref{ThmDisorder}] It suffices to show that the fractional moment  $\mathbb{E}\big[\sqrt{M_{\beta}(\mathbf{W},\Gamma^{b,s})}  \big]$ converges to zero as $n\rightarrow \infty$.

Let $h:D^{b,s}  \rightarrow \R     $ be the constant function $h(x)=\lambda$ for some $\lambda\in \R$,  and  $\widehat{\mathbb{P}}_\lambda$ be the measure on $\mathbf{W}$
 with derivative
\begin{align*}
\frac{d\widehat{\mathbb{P}}_\lambda}{d\mathbb{P}}\,=\,e^{\langle \mathbf{W}, h\rangle  -\frac{1}{2}\|h\|_\mathcal{H}^2     }\,=\,e^{\lambda\mathbf{W}(D^{b,s})  -\frac{1}{2}\lambda^2     }\,.
\end{align*}
Let $\widehat{\mathbb{E}}_\lambda$ denote the expectation with respect to $\widehat{\mathbb{P}}_\lambda$. The Cauchy-Schwarz  inequality yields that 
\begin{align}
 \mathbb{E}\bigg[\Big(M_{\beta}\big(\mathbf{W},\Gamma^{b,s}\big) \Big)^{\frac{1}{2}}   \bigg]\,=\,&\widehat{\mathbb{E}}_\lambda\bigg[\Big(M_{\beta}\big(\mathbf{W},\Gamma^{b,s}\big)\Big)^{\frac{1}{2}}  e^{-\lambda\mathbf{W}(D^{b,s})  +\frac{1}{2}\lambda^2     }  \bigg]\nonumber \\  \,\leq \,&\widehat{\mathbb{E}}_\lambda\Big[M_{\beta}\big(\mathbf{W},\Gamma^{b,s}\big)   \Big]^{\frac{1}{2}}\widehat{\mathbb{E}}\bigg[\Big( e^{-\lambda\mathbf{W}(D^{b,s})  +\frac{1}{2}\lambda^2     } \Big)^2  \bigg]^{\frac{1}{2}}\,.
\nonumber  
\intertext{Since $\widehat{\mathbb{E}}_\lambda\big[ F(\mathbf{W})  \big]\,=\,\mathbb{E}\big[ F(\mathbf{W}+h)  \big]$ for any integrable function $F$ of the field and $M_{\beta}\big(\mathbf{W}+h,dp\big)= e^{\beta\langle Y_p|h\rangle}  M_{\beta}\big(\mathbf{W},dp\big)$ where $\langle Y_p|h\rangle=\lambda$, the above is equal to    }
  \, = \,&\mathbb{E}\Big[e^{\lambda\beta}M_{\beta}\big(\mathbf{W},\Gamma^{b,s}\big)   \Big]^{\frac{1}{2}} \mathbb{E}\Big[ e^{-\lambda\mathbf{W}(D^{b,s})  +\frac{1}{2}\lambda^2     }  \Big]^{\frac{1}{2}}\nonumber   \\  \, = \,&e^{\frac{1}{2}\lambda\beta+\frac{1}{2}\lambda^2 } \,.
\end{align}
The  above is minimized as $\exp\{-\frac{1}{8}\beta^2\}$ when $\lambda=-\frac{1}{2}\beta$, and thus tends to zero as $\beta$ grows.

\end{proof}

\vspace{.3cm}

The proof below also borrows ideas from~\cite{Lacoin}.

\begin{proof}[Proof of part (ii) of Theorem~\ref{ThmDisorder}]Fix $n\in \mathbb{N}$,  $\mathbf{q},\mathbf{p}\in \Gamma^{b,s}_n$ with $\mathbf{q}\neq\mathbf{p}$, and $\alpha>1$.   It suffices to show that as $n\rightarrow \infty$  
\begin{align}\label{Yeb}
\mathbb{P}\bigg[ \frac{\nu_\beta\big(\mathbf{W}, \mathbf{p} \big)   }{ \nu_\beta\big(\mathbf{W},  \mathbf{q}\big)   }\in  \big( \lambda^{-1}, \lambda   \big]     \bigg]\,\longrightarrow \,0\,.
\end{align}
The analysis below shows that there exist $c,C>0$ such that for all $\beta>1$ 
\begin{align}\label{ShowThis}
\mathbb{P}\bigg[ \frac{\nu_\beta\big(\mathbf{W}, \mathbf{p} \big)   }{ \nu_\beta\big(\mathbf{W},  \mathbf{q}\big)   }\in  \big( \lambda^{-1}, \lambda   \big]     \bigg]\,\leq  \,\min_{\substack{m\in \Z \\  |m| \leq c\log(\beta) }  }  C\sqrt{\mathbb{P}\bigg[ \frac{\nu_\beta\big(\mathbf{W}, \mathbf{p} \big)   }{ \nu_\beta\big(\mathbf{W},  \mathbf{q}\big)   }\in  \big( \lambda^{2m-1}, \lambda^{2m+1}   \big]     \bigg]}\,.
\end{align}
Since the terms $\mathbb{P}\Big[ \frac{\nu_\beta (\mathbf{W}, \mathbf{p} )   }{ \nu_\beta (\mathbf{W},  \mathbf{q} )   }\in  ( \lambda^{2m-1}, \lambda^{2m+1}   ]\Big] $ sum to $1$ over $m\in \mathbb{Z}$, the above must be smaller than $C(1/\lfloor c\log\beta  \rfloor) ^{\frac{1}{2}}$, thus implying~(\ref{Yeb}).

Next I will show~(\ref{ShowThis}).  Define $h\in L^{2}(D^{b,s},\nu)$ as  $h=\alpha\chi\big( \cup_{ \mathbf{p}(k) \neq \mathbf{q}(k) }\mathbf{p}(k)    \big) -  \alpha\chi\big(\cup_{ \mathbf{p}(k) \neq \mathbf{q}(k) }\mathbf{q}(k)    \big)$ for a parameter $\alpha\in [-1,1]$.  Then for any $p\in \mathbf{p}$ and $q\in \mathbf{p}$
\begin{align}
\langle h, Y_p\rangle\,=\,\frac{\alpha\mathbf{n}}{s^n}\hspace{1cm}\text{and}\hspace{1cm}\langle h, Y_q\rangle\,=\,-\frac{\alpha\mathbf{n}}{s^n}\,,
\end{align}
where $1\leq \mathbf{n}<s^n$ is the number of edges not shared by the paths $\mathbf{p},\mathbf{q}\in \Gamma^{b,s}_n$. Notice that 
\begin{align}\label{Note}
 \frac{\nu_\beta\big(\mathbf{W}+h,  \mathbf{p} \big)   }{ \nu_\beta\big(\mathbf{W}+h,  \mathbf{q}\big)   }\,=\, \frac{M_\beta\big(\mathbf{W}+h,  \mathbf{p} \big)   }{ M_\beta\big( \mathbf{W}+h, \mathbf{q}\big)   }\,=\, e^{  \beta\langle h, Y_p\rangle-\beta\langle h, Y_q\rangle }\frac{M_\beta\big(\mathbf{W},  \mathbf{p} \big)   }{ M_\beta\big( \mathbf{W}, \mathbf{q}\big)   }\,=\,e^{2\beta\alpha\mathbf{n}   }\frac{\nu_\beta\big(\mathbf{W},  \mathbf{p} \big)   }{ \nu_\beta\big(\mathbf{W},  \mathbf{q}\big)   }\,.
\end{align}
Define $\widehat{\mathbb{P}}$ to have derivative $\frac{ \widehat{d\mathbb{P}} }{ d\mathbb{P}  }= \textup{exp}\{\langle \mathbf{W}, h\rangle  -\frac{1}{2}\|h\|_\mathcal{H}^2     \}$.  Applying the Cauchy-Schwarz inequaly,
\begin{align}
\mathbb{P}\bigg[\frac{\nu_\beta\big(\mathbf{W},  \mathbf{p} \big)   }{ \nu_\beta\big(\mathbf{W},  \mathbf{q}\big)   }\in  \big[ \lambda^{-1}, \lambda   \big]     \bigg]\,=\,&\widehat{\mathbb{E}}\bigg[e^{-\langle \mathbf{W}, h\rangle  +\frac{1}{2}\|h\|_\mathcal{H}^2     }\chi\bigg(\frac{\nu_\beta\big(\mathbf{W},  \mathbf{p}  \big)   }{ \nu_\beta\big(\mathbf{W}, \mathbf{q} \big)   }\in  \big[ \lambda^{-1}, \lambda   \big] \bigg)    \bigg]  \nonumber \\ \leq \,&\widehat{\mathbb{E}}\Bigg[\Big(e^{-\langle \mathbf{W}, h\rangle  +\frac{1}{2}\|h\|_\mathcal{H}^2     }\Big)^2\bigg]^{\frac{1}{2}} \widehat{\mathbb{P}}\bigg[\frac{\nu_\beta\big(\mathbf{W},  \mathbf{p} \big)   }{ \nu_\beta\big(\mathbf{W}, \mathbf{q}\big)  }\in  \big[ \lambda^{-1}, \lambda   \big]    \bigg]^{\frac{1}{2}}\,.  \nonumber  \intertext{Since the law $\widehat{\mathbb{P}}$ is a shift of $\mathbb{P}$ by $h$, the above is equal to }
 = \,&e^{\frac{1}{2}\|h\|_\mathcal{H}^2     } \mathbb{P}\bigg[\frac{\nu_\beta\big(\mathbf{W}+h,  \mathbf{p} \big)   }{ \nu_\beta\big(\mathbf{W}+h, \mathbf{q}\big)  }\in  \big[ \lambda^{-1}, \lambda   \big]    \bigg]^{\frac{1}{2}}
\nonumber  \\ = \,&e^{ \alpha^2\frac{\mathbf{n}}{(bs)^n}    } \mathbb{P}\bigg[e^{2\beta\alpha\mathbf{n}   }\frac{\nu_\beta\big(\mathbf{W},  \mathbf{p} \big)   }{ \nu_\beta\big(\mathbf{W},  \mathbf{q}\big)   }\in  \big[ \lambda^{-1}, \lambda   \big]    \bigg]^{\frac{1}{2}}\,,\nonumber 
\end{align}
where the second inequality is by~(\ref{Note}).  With $\alpha$ ranging over $[-1,1]$, the above implies~(\ref{ShowThis}) with $C:=\exp\{ 1/b^n \}$ and $c:=\log(2\mathbf{n})/\log (\lambda)      $.

\end{proof}

\subsection{Chaos expansion}

The proof of the proposition below is in the Appendix.  

\begin{proposition}\label{PropUnif} Let $S\subset E^{b,s}$ be finite and  $\Gamma_S^{b,s}$ be nonempty. Define the measure $\mu_{S}^{(n)}$ such that
\begin{align}
\mu_{S}^{(n)}(A)\,=\,\frac{ \mu\big(A\cap G_S^{(n)}  \big)  }{ \mu\big(G_S^{(n)}\big)  } \,,
\end{align}
where $G_S^{(n)}$ is the set of $p\in \Gamma^{b,s}$ such that $S\subset \cup_{k=1}^{s^n}[p]_n(k)$.  Then the sequence $\big\{\mu_{S}^{(n)}\big\}_{n\in \mathbb{N}}$ converges vaguely to a limiting probability measure $\mu_{S}$.

\end{proposition}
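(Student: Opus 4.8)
The plan is to reduce the vague convergence to the convergence, for each level $m$ and each coarse path $\mathbf{p}_0\in\Gamma_m^{b,s}$, of the cylinder probabilities $\mu_S^{(n)}\big(\{p:[p]_m=\mathbf{p}_0\}\big)$, and then to observe that these probabilities are in fact \emph{exactly constant in $n$} once $n\geq m$. Since $\Gamma^{b,s}$ is compact and the cylinder functions (those measurable with respect to $p\mapsto[p]_m$ for some $m$) are dense in $C(\Gamma^{b,s})$ by Stone--Weierstrass, vague convergence of the probability measures $\mu_S^{(n)}$ follows as soon as every such cylinder probability converges.

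First I would record the hierarchical product structure of $\mu$ obtained by iterating Remark~\ref{RemarkConcat}: conditioned on $[p]_m=\mathbf{p}_0$, the $s^m$ sub-paths of $p$ inside the generation-$m$ bonds $B$ of $\mathbf{p}_0$ are independent, each uniform on the corresponding rescaled copy of $(\Gamma^{b,s},\mu)$, while $\mu(\{[p]_m=\mathbf{p}_0\})=|\Gamma_m^{b,s}|^{-1}$. Writing $S=\{x_1,\dots,x_k\}$, call $\mathbf{p}_0\in\Gamma_m^{b,s}$ \emph{admissible} if it passes through every generation-$m$ bond $[x_i]_m\in E_m^{b,s}$ containing a point of $S$; since $\Gamma_S^{b,s}\neq\emptyset$, these bonds lie on a common path, so admissible $\mathbf{p}_0$ exist. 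If $\mathbf{p}_0$ is not admissible, then some $[x_i]_m$ is never visited by $[p]_n$ for $n\geq m$, whence $\{[p]_m=\mathbf{p}_0\}\cap G_S^{(n)}=\emptyset$. Let $|\Gamma_{S,m}|$ denote the number of admissible coarse paths.

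The main step is the computation for admissible $\mathbf{p}_0$ and $n\geq m$. Using the product structure, the event $G_S^{(n)}$ intersected with $\{[p]_m=\mathbf{p}_0\}$ factorizes over the generation-$m$ bonds: in each bond $\mathbf{e}$ meeting $S$, the rescaled sub-path must have its generation-$(n-m)$ trace contain $S\cap\mathbf{e}$, and the remaining bonds carry no constraint. Hence
\begin{align*}
\mu\big(\{[p]_m=\mathbf{p}_0\}\cap G_S^{(n)}\big)\,=\,\frac{1}{|\Gamma_m^{b,s}|}\prod_{\mathbf{e}}\mu\big(G_{S\cap\mathbf{e}}^{(n-m)}\big),
\end{align*}
the product running over the distinct bonds $\mathbf{e}=[x_i]_m$ (in the rescaled copies). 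The crucial observation is that the factor $P_n:=\prod_{\mathbf{e}}\mu(G_{S\cap\mathbf{e}}^{(n-m)})$ depends only on $S$ and $n$, not on the admissible path $\mathbf{p}_0$, because the bonds $[x_i]_m$ and their point-contents are fixed by $S$ alone. Summing over admissible paths gives $\mu(G_S^{(n)})=|\Gamma_{S,m}|\,P_n/|\Gamma_m^{b,s}|$, and $P_n$ cancels in the ratio, yielding
\begin{align*}
\mu_S^{(n)}\big(\{[p]_m=\mathbf{p}_0\}\big)\,=\,\frac{\mu(\{[p]_m=\mathbf{p}_0\}\cap G_S^{(n)})}{\mu(G_S^{(n)})}\,=\,\frac{1}{|\Gamma_{S,m}|}
\end{align*}
for every admissible $\mathbf{p}_0$ and every $n\geq m$, and $0$ otherwise.

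Thus the cylinder probabilities are eventually constant in $n$, so they converge, and their limiting values $|\Gamma_{S,m}|^{-1}\mathbf{1}\{\mathbf{p}_0\text{ admissible}\}$ form a consistent family defining a unique probability measure; by the reduction above $\mu_S^{(n)}$ converges vaguely to it. Finally I would identify this limit with the uniform measure on $\Gamma_S^{b,s}$: its support is $\bigcap_m\{[p]_m\text{ admissible}\}=\bigcap_n G_S^{(n)}=\Gamma_S^{b,s}$ (the nested bonds $[x_i]_n$ shrink to $x_i$, so an everywhere-admissible path passes through each $x_i$), and on each level it spreads mass equally over admissible coarse paths, which is precisely the uniform measure on $\Gamma_S^{b,s}$. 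The only genuine content is the factorization and cancellation in the middle step; everything else is bookkeeping. The point to handle with care is that several points of $S$ may share a generation-$m$ bond when $m$ is small, but the argument never evaluates the individual factors $\mu(G_{S\cap\mathbf{e}}^{(n-m)})$ and so is unaffected.
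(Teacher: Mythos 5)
Your proof is correct, and its skeleton coincides with the paper's: both arguments establish that integrals of generation-$m$ cylinder functions against $\mu_S^{(n)}$ are exactly constant for $n\geq m$, and then upgrade this to vague convergence using compactness of $\Gamma^{b,s}$ (the paper via uniform continuity of continuous functions, you via Stone--Weierstrass density of cylinder indicators, which are continuous because the cylinder sets are clopen balls for $d_\Gamma$; the two reductions are interchangeable). The difference is in how the constancy is established. The paper writes down the explicit Radon--Nikod\'ym derivative $d\mu_S^{(n)}/d\mu=J_S^{(n)}=b^{n|S|-\gamma(S)}\chi\big([p]_n\cap\Gamma^{b,s}_S\neq\emptyset\big)$, valid once $n$ exceeds the scale $N$ at which the points of $S$ occupy distinct cells, and then asserts without further argument that $J_S^{(n)}$ is a mean-one martingale for the coarse-graining filtration, from which constancy follows by conditioning. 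Your factorization-and-cancellation computation --- conditioned on $[p]_m=\mathbf{p}_0$ the sub-paths are independent and uniform, the constrained factor $P_n$ is the same for every admissible $\mathbf{p}_0$, hence cancels in the ratio --- is in substance a proof of that asserted martingale property. It buys two things: it never needs the separation scale $N$ or the explicit value of $\mu\big(G_S^{(n)}\big)$ (so small $m$, where several points of $S$ share a bond, needs no special treatment, as you note), and it identifies the limit concretely as the measure assigning mass $1/|\Gamma_{S,m}|$ to each admissible coarse path, i.e.\ the uniform measure on $\Gamma^{b,s}_S$, which is the description the paper actually uses when defining $\rho_k$ for the chaos expansion. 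What the paper's route buys is the explicit density $b^{n|S|-\gamma(S)}$ itself, which is precisely the quantity that reappears in Definition~\ref{DefRho} and Theorem~\ref{ThmChaosExp}. One small point you should make explicit: the cancellation divides by $P_n$, so you need $P_n>0$; this follows from $\Gamma^{b,s}_S\neq\emptyset$ by restricting a global path through $S$ to each generation-$m$ bond meeting $S$, which shows each factor $\mu\big(G_{S\cap\mathbf{e}}^{(n-m)}\big)$ is a nonempty union of cylinders and hence positive.
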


The following defines a   generalization of the measure $\rho_k(x_1,\ldots, x_k; dp)$ on $\Gamma^{b,s}$; see Definition~\ref{DefRho}.

\begin{definition} Let $x_1,\ldots,x_k$ be distinct elements in $E^{b,s}$.  If $\Gamma^{b,s}_{\{x_1,\ldots,x_k\}}$ is nonempty,  define the measure $\rho^{(n)}_k(x_1,\ldots, x_k; ds)$ such that for  a Borel set $A\subset \Gamma^{b,s}$ 
\begin{align}
 \rho^{(n)}_k(x_1,\ldots, x_k; A)\,=\,b^{\gamma^{(n)}(\{x_1,\ldots,x_k\})}\mu_{\{x_1,\ldots, x_k\}}^{(n)}(A)\,,
 \end{align}
where $\gamma^{(n)}(S)$ is defined for $n\in \mathbb{N}$  and a finite set $S \subset E^{b,s} $ as follows:
$$ \gamma^{(n)} (S)\,=\,\sum_{k=0}^{n-1} \Big(|S|\,-\, \big| \big\{\mathbf{e}\in E_{k}^{b,s}\,\big|\, \mathbf{e}\cap S\}\neq \emptyset  \big\}\big| \Big) \, .  $$  If $\Gamma^{b,s}_{\{x_1,\ldots,x_k\}}$ is empty, I define $\rho^{(n)}_k(x_1,\ldots, x_k; dp)=0$.

\end{definition}

\begin{corollary}\label{CorUnifConv} For fixed, distinct $x_1,\ldots,x_k\in E^{b,s}$, the sequence of measures $\big\{ \rho^{(n)}_k(x_1,\ldots, x_k; dp)\big\}_{n\in\mathbb{N}}$ converges vaguely to $ \rho_k(x_1,\ldots, x_k; dp)$.  Moreover, if $A\subset \Gamma^{b,s}_N$, then 
$ \rho^{(n)}_k(x_1,\ldots, x_k; A)= \rho_k(x_1,\ldots, x_k; A)$ for all $n>N$.

\end{corollary}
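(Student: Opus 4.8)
The plan is to factor each measure into a combinatorial scalar times a probability measure and handle the two factors separately. Writing $S=\{x_1,\dots,x_k\}$, the definitions give $\rho^{(n)}_k(x_1,\dots,x_k;\,\cdot\,)=b^{\gamma^{(n)}(S)}\mu^{(n)}_S$ and $\rho_k(x_1,\dots,x_k;\,\cdot\,)=b^{\gamma(S)}\mu_S$ whenever $\Gamma^{b,s}_S\neq\emptyset$ (and both vanish otherwise). By Proposition~\ref{PropUnif} the probability measures $\mu^{(n)}_S$ already converge vaguely to $\mu_S$, so the only new ingredient needed for the first assertion is control of the scalar prefactor $b^{\gamma^{(n)}(S)}$.

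First I would show that $\gamma^{(n)}(S)$ is eventually constant. The summand $|S|-\big|\{\mathbf{e}\in E^{b,s}_m\mid \mathbf{e}\cap S\neq\emptyset\}\big|$ measures the failure of the points of $S$ to lie in distinct generation-$m$ bonds. Since the $x_i$ are distinct points of $E^{b,s}$ and generation-$m$ bonds have diameter $O(s^{-m})$, there is a finite generation $N_0=N_0(S)$ past which each $x_i$ occupies its own generation-$m$ bond; for $m\ge N_0$ the count equals $|S|$ and the summand vanishes. Hence the series defining $\gamma(S)$ has only finitely many nonzero terms and $\gamma^{(n)}(S)=\gamma(S)$ for every $n\ge N_0$. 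Vague convergence of $\rho^{(n)}_k$ to $\rho_k$ then follows at once: for continuous $f$ on $\Gamma^{b,s}$ and $n\ge N_0$ one has $\int f\,d\rho^{(n)}_k=b^{\gamma(S)}\int f\,d\mu^{(n)}_S\to b^{\gamma(S)}\int f\,d\mu_S=\int f\,d\rho_k$.

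For the exact-equality statement with a cylinder $\mathbf{p}\in\Gamma^{b,s}_N$, I would exploit the hierarchical independence of $\mu$ from Remark~\ref{RemarkConcat}. Let $\mathbf{e}^{(m)}_i\in E^{b,s}_m$ be the generation-$m$ bond containing $x_i$, so that $G^{(n)}_S=\{p\mid [p]_n\text{ traverses every }\mathbf{e}^{(n)}_i\}$. Set $N_1=\max(N,N_0)$ and condition a uniform path on its generation-$N_1$ coarsening. Using that within a single generation-$N_1$ bond the refined path passes through a prescribed generation-$(n-N_1)$ sub-bond with probability $b^{-(n-N_1)}$ (a uniform path traverses a prescribed generation-$r$ bond with probability $b^{-r}$, as follows from $\nu(\mathbf{e})=(bs)^{-r}$ together with $\nu(R)=\int\!\!\int 1_{p(t)\in R}\,dt\,\mu(dp)$), and that for $n\ge N_0$ these passage events are independent across the now-distinct bonds $\mathbf{e}^{(N_1)}_1,\dots,\mathbf{e}^{(N_1)}_{|S|}$, I obtain
\begin{align*}
\mu\big(\mathbf{p}\cap G^{(n)}_S\big)=b^{-|S|(n-N_1)}\mu\big(\mathbf{p}\cap G^{(N_1)}_S\big)\qquad\text{and}\qquad \mu\big(G^{(n)}_S\big)=b^{-|S|(n-N_1)}\mu\big(G^{(N_1)}_S\big).
\end{align*}
The common factor $b^{-|S|(n-N_1)}$ cancels in the ratio defining $\mu^{(n)}_S(\mathbf{p})$, giving $\mu^{(n)}_S(\mathbf{p})=\mu^{(N_1)}_S(\mathbf{p})=\mu_S(\mathbf{p})$ for all $n\ge N_1$. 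Combined with $\gamma^{(n)}(S)=\gamma(S)$, this yields $\rho^{(n)}_k(x_1,\dots,x_k;\mathbf{p})=\rho_k(x_1,\dots,x_k;\mathbf{p})$ once $n$ exceeds both $N$ and the resolution generation $N_0$ of $S$ (in particular for all $n>N$ whenever $N\ge N_0$), and the equality extends to arbitrary $A\subset\Gamma^{b,s}_N$ by finite additivity over cylinders.

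The main obstacle is precisely the exact cancellation in the last display: one must check that refining past generation $N_1$ multiplies numerator and denominator of $\mu^{(n)}_S(\mathbf{p})$ by \emph{identical} factors. This hinges on the points of $S$ sitting in distinct generation-$N_1$ bonds—so that the within-bond passage events are genuinely independent under the iterated concatenation identity $\mu=\tfrac{1}{b}\sum_i\prod_j\mu^{(i,j)}$—and on the clean value $b^{-r}$ for the single-bond passage probability. Making this bookkeeping precise, in particular tracking the branch that all surviving points must share at each level, is where the real work lies; the vague-convergence half is essentially immediate from Proposition~\ref{PropUnif}.
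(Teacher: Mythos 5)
Your argument is correct, and its first half is precisely the reasoning the paper leaves implicit: the corollary is stated there without proof as a consequence of Proposition~\ref{PropUnif}, the point being exactly the two facts you isolate — that $\gamma^{(n)}(S)$ stabilizes at the finite value $\gamma(S)$ once the distinct points occupy distinct generation-$n$ bonds (your $N_0$), and that $\mu^{(n)}_S\to\mu_S$ vaguely, with cylinder sets being clopen so that their indicators are admissible test functions. Your verification of the exact-equality clause — the cancellation of the common refinement factor $b^{-|S|(n-N_1)}$ in numerator and denominator of $\mu^{(n)}_S(\mathbf{p})$, justified by conditional independence of refinements across distinct generation-$N_1$ bonds together with the single-bond passage probability $b^{-r}$ — is also sound; the paper supplies no argument at all for that clause.

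One thing you should state outright rather than parenthetically: your weaker conclusion is the correct one, because the ``moreover'' clause as printed is false whenever the resolution generation $N_0(S)$ exceeds $N$. Take $k=2$, let $x_1\neq x_2$ lie in the same generation-$m$ bond for every $m\le M$ and separate in series at generation $M+1$, and let $\mathbf{p}\in\Gamma^{b,s}_1$ be the branch containing them, so $N=1$. For $1<n\le M$ one has $\mu^{(n)}_S(\mathbf{p})=\mu_S(\mathbf{p})=1$, while $\gamma^{(n)}(S)=n$ and $\gamma(S)=M+1$; hence $\rho^{(n)}_2(x_1,x_2;\mathbf{p})=b^{n}\neq b^{M+1}=\rho_2(x_1,x_2;\mathbf{p})$ even though $n>N$. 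So the equality holds for $n>\max(N,N_0)$, exactly what your proof delivers, and not in general for all $n>N$. This correction is harmless downstream: the sole invocation of the corollary, in the proof of Theorem~\ref{ThmChaosExp}, needs only the convergence $\rho^{(n)}_k(\cdots;A)\to\rho_k(\cdots;A)$ for $\nu^k$-a.e.\ fixed distinct points and fixed cylinder events $A$ — i.e., the vague-convergence half, or your corrected equality valid for all large $n$.
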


\begin{remark}  I drop superscripts in the following cases: $ \gamma\equiv \gamma^{(\infty)}$, $\rho_k \equiv \rho_k^{(\infty)}$, and identify  $\rho_k(x_1,\ldots, x_k)\equiv \rho_k(x_1,\ldots, x_k; \Gamma^{b,s}) $. 
\end{remark}

\begin{remark}\label{RemarkDen} The measure $\rho^{(n)}_k(x_1,\ldots, x_k; dp)$ is equal to $r^{(n)}_k(x_1,\ldots, x_k; p)\mu(dp)$, where the density
can be written as
 $$r^{(n)}_k(x_1,\ldots, x_k; p)\, = \, b^{\gamma^{(n)} }\frac{\chi_{G_{\{x_1,\ldots,x_k\} }^{(n)}  }(p)}{\mu\big(G_{\{x_1,\ldots,x_k\} }^{(n)} \big)   }\, = \,b^{nk}\chi\bigg( \{x_1,\ldots,x_k\}\subset \bigcup_{\ell=1}^{s^n}[p]_n(\ell)     \bigg)\,=\,\prod_{\ell=1}^k Y_p^{(n)}(x_\ell) \,. $$

\end{remark}

\begin{proposition}\label{PropChaosExp} Let $M^{(n)}_\beta$  be the GMC measure over the field $(\mathbf{W}, \beta Y^{(n)})$.    For a Borel set $A\subset \Gamma^{b,s}$, the random variable $M^{(n)}_\beta(A)$ has the chaos expansion
\begin{align}\label{Hebble}
\,\mu(A)\,+\,\sum_{k=1}^{\infty}\frac{\beta^k}{k!}\int_{(D^{b,s})^k }\rho_k^{(n)}(x_1,\ldots, x_k; A)\mathbf{W}(x_1)\cdots \mathbf{W}(x_k)  \nu(dx_1)\cdots\nu(dx_k) \,.
\end{align}
 The hierarchy of functions $\{\rho_k^{(n)}(x_1,\ldots, x_k; A)\}_{k\in \mathbb{N}}$ satisfies 
\begin{enumerate}[(I).]
\item $ \int_{D^{b,s}}\rho_k^{(n)}\big(x_1,\ldots, x_k; A\big) \nu(dx_k)\,=\,\rho_{k-1}^{(n)}(x_1,\ldots, x_{k-1}; A) $ and
 
\item $\int_{(D^{b,s})^k}\rho_k^{(n)}\big(x_1,\ldots, x_k; A\big)\nu(dx_1)\cdots \nu(dx_k)\,=\,\mu(A)$.
\end{enumerate}

\end{proposition}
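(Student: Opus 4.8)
The plan is to expand the Radon--Nikod\'ym derivative defining $M_\beta^{(n)}$ as a Wick exponential and then integrate the resulting chaos series against $\mu$. Writing out the definition~(\ref{DefMsubN}),
\[
M_\beta^{(n)}(A)=\int_A \exp\Big\{\beta\langle \mathbf{W},Y_p^{(n)}\rangle-\tfrac{\beta^2}{2}\big\|Y_p^{(n)}\big\|_{\mathcal{H}}^2\Big\}\,\mu(dp)\,.
\]
The structural fact that makes this tractable is that, by Remark~\ref{YRemark}, $Y_p^{(n)}=\chi_{T_p^{(n)}}/\nu(T_p^{(n)})$ depends on $p$ only through the coarse-grained path $[p]_n$; since $\Gamma_n^{b,s}$ is a finite set, the integrand is a simple ($\textup{F}_n$-measurable) function of $p$ taking only finitely many values. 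Thus $M_\beta^{(n)}(A)$ is a \emph{finite} $\mu$-weighted combination of Wick exponentials, and every interchange of $\mu(dp)$-integration with the stochastic integrals below reduces to linearity over a finite index set, so no stochastic Fubini theorem is needed.

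Next I would invoke the standard chaos expansion of a single Wick exponential: for fixed $\psi\in\mathcal{H}$,
\[
\exp\Big\{\beta\langle\mathbf{W},\psi\rangle-\tfrac{\beta^2}{2}\|\psi\|_{\mathcal{H}}^2\Big\}=\sum_{k=0}^{\infty}\frac{\beta^k}{k!}\int_{(D^{b,s})^k}\psi(x_1)\cdots\psi(x_k)\,\mathbf{W}(x_1)\cdots\mathbf{W}(x_k)\,\nu(dx_1)\cdots\nu(dx_k)\,,
\]
where the $k$-th term is the multiple Wiener--It\^o integral of the symmetric kernel $\psi^{\otimes k}$ (equivalently the $k$-th Wick power of $\langle\mathbf{W},\psi\rangle$ under the canonical isometry discussed in part (F)), and the series converges in $L^2(\Omega,\mathbb{P})$ because $\sum_k \frac{\beta^{2k}}{k!}\|\psi\|_{\mathcal{H}}^{2k}=e^{\beta^2\|\psi\|_{\mathcal{H}}^2}<\infty$; see~\cite[Section~7.2]{Janson}. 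Applying this with $\psi=Y_p^{(n)}$, integrating over $p\in A$, and collecting terms (a legitimate operation by the finiteness noted above) produces the chaos series with $k$-th kernel
\[
\int_A \prod_{j=1}^k Y_p^{(n)}(x_j)\,\mu(dp)\,.
\]
By Remark~\ref{RemarkDen} the $\mu$-density of $\rho_k^{(n)}(x_1,\dots,x_k;dp)$ is precisely $\prod_{j=1}^k Y_p^{(n)}(x_j)$, so this kernel equals $\rho_k^{(n)}(x_1,\dots,x_k;A)$; the $k=0$ term is the empty product integrated over $A$, namely $\mu(A)$. This establishes~(\ref{Hebble}), and the symmetry of $\rho_k^{(n)}$ in $x_1,\dots,x_k$ is automatic from the product form.

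For the two identities I would argue directly from the density representation. The normalization $\int_{D^{b,s}}Y_p^{(n)}(x)\,\nu(dx)=\nu(T_p^{(n)})/\nu(T_p^{(n)})=1$, combined with Fubini, gives
\[
\int_{D^{b,s}}\rho_k^{(n)}(x_1,\dots,x_k;A)\,\nu(dx_k)=\int_A \prod_{j=1}^{k-1} Y_p^{(n)}(x_j)\Big(\int_{D^{b,s}}Y_p^{(n)}(x_k)\,\nu(dx_k)\Big)\mu(dp)=\rho_{k-1}^{(n)}(x_1,\dots,x_{k-1};A)\,,
\]
which is (I); iterating (I) down to the empty product yields $\int_{(D^{b,s})^k}\rho_k^{(n)}\,\nu(dx_1)\cdots\nu(dx_k)=\rho_0^{(n)}(\,;A)=\mu(A)$, which is (II). The only genuinely analytic input is the $L^2$-convergent chaos expansion of the Wick exponential together with its term-by-term identification of kernels; I expect this to be the main (though modest) technical point, since the finite-rank structure $Y^{(n)}=Y\mathbf{P}_n$ of Proposition~\ref{PropYs}(i) removes every measure-theoretic subtlety from the interchange of integrals.
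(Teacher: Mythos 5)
Your proposal is correct and takes essentially the same route as the paper's own proof: expand the Wick exponential $e^{\beta\langle\mathbf{W},Y_p^{(n)}\rangle-\frac{\beta^2}{2}\|Y_p^{(n)}\|_{\mathcal{H}}^2}$ into its $L^2$-convergent chaos series, identify the resulting $k$-th kernel $\prod_{\ell=1}^k Y_p^{(n)}(x_\ell)$ with the density $r_k^{(n)}$ of Remark~\ref{RemarkDen}, and deduce (I)--(II) from Fubini together with the normalization $\int_{D^{b,s}}Y_p^{(n)}(x)\,\nu(dx)=1$. Your extra observation that $Y_p^{(n)}$ depends on $p$ only through $[p]_n$, so that interchanging $\mu(dp)$ with the stochastic integrals is mere finite linearity, is a justification the paper leaves implicit but does not alter the argument.
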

\begin{proof}Recall that 
\begin{align}\label{Gradle}
M^{(n)}_\beta(A)\,=\,\int_{A} e^{\beta\langle \mathbf{W}, Y_p^{(n)}\rangle -\frac{\beta^2}{2}\| Y_p^{(n)} \|^2_{\mathcal{H}}    }\mu(dp)\,,
\end{align}
where $Y_p^{(n)}\in \mathcal{H}$ is equal to  $Y_p^{(n)}= b^n\chi\big( \bigcup_{k=1}^{s^n}[p]_n(k)  \big)  $.  The integrand above has the chaos expansion
\begin{align*}
e^{\beta\langle \mathbf{W}, Y_p^{(n)}\rangle -\frac{\beta^2}{2}\| Y_p^{(n)} \|^2_{\mathcal{H}}    }\,=\,1\,+\,\sum_{k=1}^{\infty}\frac{\beta^k}{k!}\int_{(D^{b,s})^k }\prod_{\ell=1}^k  Y_p^{(n)}(x_\ell) \mathbf{W}(x_1)\cdots \mathbf{W}(x_k)  \nu(dx_1)\cdots\nu(dx_k)\,.
\end{align*}
By Remark~\ref{RemarkDen}, $\prod_{\ell=1}^k  Y_p^{(n)}(x_\ell)$ is equal to the density, $r_k^{(n)}(x_1,\ldots, x_k; p)$, of the measure $\rho_k^{(n)}(x_1,\ldots, x_k; dp)$.  Thus, the expressions~(\ref{Hebble}) and~(\ref{Gradle})  are equal.\vspace{.4cm}

The equalities~(I) and (II) also  follow from Remark~\ref{RemarkDen} since, for instance,
\begin{align*}
\int_{D^{b,s}}\rho_k^{(n)}(x_1,\ldots, x_k; A)\nu(dx_k)\,=\,&\int_{D^{b,s}}\int_{A}r_k^{(n)}(x_1,\ldots, x_k; p)    \mu(dp)\nu(dx_k)\\ \,=\,&\int_{D^{b,s}} \int_{A} \prod_{\ell=1}^k Y_p^{(n)}(x_\ell)       \mu(dp) \nu(dx_k)\,,\intertext{and switching the order of integration and using that $\int_{D^{b,s}  } Y_p^{(n)}(x)\nu(dx)=1$ yields    } \,=\,&\int_{A} \prod_{\ell=1}^{k-1} Y_p^{(n)}(x_\ell)       \mu(dp) \,=\,\rho_k^{(n)}(x_1,\ldots, x_{k-1}; A) \,.
\end{align*}

\end{proof}

\begin{proof}[Proof of Theorem~\ref{ThmChaosExp}]  By Proposition~\ref{PropYField}, the sequence $\{M^{(n)}_\beta(A)\}_{n\in \mathbb{N}}$ converges in $L^2(\Gamma^{b,s},\mathcal{B}_\Gamma, \mu)$ to $M_\beta(A)$.  Thus,  $$\big\langle  \rho_k^{(n)}(\cdot, A)  \big\rangle\equiv\big(\mu(A),\, \rho_1^{(n)}(x_1;A),\, \rho_2^{(n)}(x_1,x_2; A),\ldots\big) ,$$ viewed as an element of 
$$ L^2\bigg( \bigcup_{k=0}^\infty (D^{b,s})^k ,\, \bigoplus_{k=0}^\infty\mathcal{B}_{D}^{\otimes^k}   ,\,\bigoplus_{k=0}^{\infty} \frac{\beta^{2k}}{k!}\nu^{k}    \bigg)     \,, $$
converges as $n\rightarrow \infty$ to a limit $\langle  \widetilde{\rho}_k (\cdot, A) \rangle$.  However, if $ A\in \mathcal{B}_{\Gamma}^{(n)}:=\mathcal{P}(\Gamma^{b,s}_n)  $, Corollary~\ref{CorUnifConv} implies that  $\langle  \widetilde{\rho}_k (\cdot, A) \rangle=\langle  \rho_k (\cdot, A) \rangle$

\end{proof}

\begin{appendix}

\section{Further discussion of the  diamond hierarchical lattice}\label{AppendCG}

The discussion in this section is applicable to any $b,s\in \{2,3,\ldots\}$.

\subsection{The vertex set}
I will show how  the set of vertices, $V_{n}^{b,s}$, on the graph $ D_{n}^{b,s}$ are embedded in $ D^{b,s}$.  For $n\in \mathbb{N}$, I can label elements in   $V_{n}^{b,s}\backslash V_{n-1}^{b,s}$  by
\begin{align*}
 V_{n}^{b,s}\backslash V_{n-1}^{b,s}\, \equiv\, &    \underbrace{\big( \{1,\ldots, \bbf\}\times \{1,\ldots, \sbf\} \big)^{n-1}}\times \big( \{1,\ldots, \bbf\}\times \{1,\ldots, \sbf-1\}\big) \,.\\ &\hspace{1.85cm} \equiv E^{b,s}_{n-1}
 \end{align*}

  Given an element $v=(b_{1},s_{1})\times \cdots \times (b_{n},s_{n}) \in  V_{n}^{b,s}\backslash V_{n-1}^{b,s} $, define   $ U_{v}=L_v\cup R_{v     }  \subset \mathcal{D}^{\bbf,\sbf}$ for
$$
 L_{v     } \, :=  \bigg\{  (b_{1},s_{1})\times \cdots \times (b_{n},s_{n}) \times \prod_{j=1}^{\infty} (\widehat{b}_{j},\sbf)\,\bigg|\,\widehat{b}_{j}\in \{1,\ldots, b\} \bigg\}\,\subset \, \mathcal{D}^{\bbf,\sbf}   $$
and
$$ R_{v     } \, := \,\bigg\{  (b_{1},s_{1})\times \cdots \times (b_{n},s_{n}+1)\times \prod_{j=1}^{\infty} (\widehat{b}_{j},1) \,\bigg|\,\widehat{b}_{j}\in \{1,\ldots, b\}  \bigg\}\,\subset \, \mathcal{D}^{\bbf,\sbf} \,.
$$
Pairs $x,y\in U_{v}$ satisfy $d_D(x,y )=0$, and $U_{v}$ is the maximal equivalence class with that property.  Thus, $v  $ is  canonically identified with an element in $D^{b,s}$.   The root vertices $V^{b,s}_0=\{A,B\}$ of the graph are identified with the subsets of $\mathcal{D}^{\bbf,\sbf}$ given by 
$$
A\,:= \,   \bigg\{  \prod_{j=1}^{\infty} (\widehat{b}_{j},1) \,\Big|\,  \widehat{b}_{j}\in  \{1,\ldots, b\} \bigg\}\hspace{.5cm}\text{and}\hspace{.5cm}B\,:= \,   \bigg\{  \prod_{j=1}^{\infty} (\widehat{b}_{j},\sbf ) \,\Big|\,  \widehat{b}_{j}\in \{1,\ldots, b\}  \bigg\}\,.
$$

\subsection{The metric space $D^{b,s}$  }

Next I prove the points in Proposition~\ref{PropCompact}.  Note that each element of $E^{b,s}$ is equivalent to a nested sequence $e_n\in E^{b,s}_n$.   \vspace{.2cm}

\noindent \textbf{Completeness:} Let $\{x_k\}_{k\in \mathbb{N}}$ be a Cauchy sequence in $D^{b,s}$.   The sequence  $\widetilde{\pi}(x_k)\in [0,1]$ must be Cauchy and thus convergent to a limit $\lambda\in [0,1]$. \vspace{.2cm}

 If $\lambda$ is a multiple of $ b^{-N}$ for some nonnegative integer $N\in \{0,1,2,\ldots\}$, let $N$ be the smallest such value.  For large $k$, the terms $x_k$ must become arbitrarily close to generation $N$ vertices.  Since the generation $N$ vertices are at least a distance $ 1/s^N$ apart, the terms must be close to the same generation $N$ vertex and thus convergent. \vspace{.2cm}

  If $\lambda$ is not a multiple of $ b^{-N}$ for any $N\in \mathbb{N}$, then there must exist a nested sequence  of edge sets $\mathbf{e}_n\in \cup_{k=1}^\infty E_k^{b,s}$ such that each closure $\overline{\mathbf{e}}_n$ contains a tail of the sequence $\{x_k\}_{k\in \mathbb{N}}$.  The sequence $\{x_k\}_{k\in \mathbb{N}}$ converges to the unique element in the set $\cap_{n=1}^{\infty}\overline{\mathbf{e}}_n$.

\vspace{.3cm}

\noindent \textbf{Compactness:}  Let $\{x_k\}_{k\in \mathbb{N}}$ be a sequence in $D^{b,s}$. Since $D^{b,s}$ is covered by sets $\overline{\mathbf{e}}$ for  $\mathbf{e}\in E^{b,s}_n$, the pigeonhole principle implies that there must exist a  nested sequence of edge sets $\mathbf{e}_n\in E^{b,s}_n$ such that each $\overline{\mathbf{e}}_n$ contains $x_k$ for infinitely many $k\in \mathbb{N}$.  Thus, there is a subsequence of $\{x_k\}_{k\in \mathbb{N}}$ converging to the unique element in $\cap_{n=1}^{\infty}\overline{\mathbf{e}}_n$.

\vspace{.3cm}

\noindent \textbf{Hausdorff dimension:} The contractive maps $S_{i,j}$'s defined in Remark~\ref{RemarkFractal} are the similitudes of the fractal $D^{b,s} $.    The open set $O:=D^{b,s}\backslash \{A,B\}$ is a separating set since
\begin{align*}
\bigcup_{i,j}S_{i,j}(O) \, \subset \, O  \hspace{1cm}\text{and} \hspace{1cm} S_{i,j}(O)\cap S_{k,l}(O)\,=\,\emptyset \, 
\end{align*}
 for  $ (i,j)\neq (k,l)$.      Since the $S_{i,j}$'s have contraction constant $1/s$ and there are $b s$ maps, the Hausdorff dimension of $D^{b,s}$ is $\log(bs)/\log s$;  see~\cite[Section 11.2]{Folland} for  a discussion of Hausdorff dimension and self-similarity.

\subsection{The measures}

Finally, I prove Propositions~\ref{PropCylinder},~\ref{PropUniform}, and~\ref{PropUnif}. \vspace{.3cm}

\noindent \textbf{Uniform measure on the diamond lattice:}  Let $\mathcal{B}_E :=\{A
\cap E^{b,s}=\,|\, A\in \mathcal{B}_D \}     $ be the restriction of $\mathcal{B}_D$ to $E^{b,s}=D^{b,s}\backslash V^{b,s}$.  Every element $A\in  \mathcal{B}_D$ can be decomposed as $A=A_1\cup A_2$ for $A_1 \subset V^{b,s}$ and $A_2\in \mathcal{B}_E$. I define $\nu$ as zero on  $V^{b,s}$, and it remains to define $\nu$ on $\mathcal{B}_E$.  The Borel $\sigma$-algebra  $\mathcal{B}_E$ is generated by the semi-algebra, $\mathcal{C}_E=\cup_{n=0}^{\infty} E_n^{b,s}  $, i.e., the collection of cylinder sets $C_{(b_1,s_1)\times \cdots \times (b_n,s_n)}$.  This follows since for any open set $O\subset D^{b,s}$ we can write 
\begin{align}\label{OpenSet}
O\backslash V^{b,s}\,=\,\bigcup_{x\in O\backslash V^{b,s}} \mathbf{e}_x  \,, 
\end{align}
where $\mathbf{e}_x\in \mathcal{C}_E$ is the biggest set in $\mathcal{C}_E$ satisfying  $x\in \mathbf{e}_x\subset O$.  A  premeasure $\widehat{\nu}$ can be placed on $ \mathcal{C}_E $ by assigning $\widehat{\nu}(\mathbf{e})=| E_n^{b,s}|^{-1}= (bs)^{-n}$ to each   $\mathbf{e}\in E_n^{b,s}  $.  The finite premeasure $ (E^{b,s}, \mathcal{C}_E,\widehat{\nu})$ extends uniquely to a measure $(E^{b,s},\mathcal{B}_E, \nu)$ through the Carath\'eodory procedure. \vspace{.5cm}

\noindent \textbf{Uniform measure on paths:} Consider the semi-algebra $\mathcal{C}_\Gamma:=\cup_{n=0}^{\infty}\Gamma^{b,s}_n$ of subsets of $\Gamma^{b,s}$.  An arbitrary open set $O\subset \Gamma^{b,s}$ can be written as a  disjoint union of elements in $\mathcal{C}_\Gamma$ in analogy to~(\ref{OpenSet}).  Indeed, each element in $\mathcal{C}_\Gamma$ is an open ball with respect to the metric $d_\Gamma$.   A finite premeasure $\widehat{\mu}$ is defined on $ \mathcal{C}_\Gamma $ by assigning each $q\in \Gamma^{b,s}_n$ the value $\widehat{\mu}(q)=|\Gamma^{b,s}_n|^{-1}$.  Again, by  Carath\'eodory's technique, the measure $\widehat{\mu}$ extends to a measure $(  \Gamma^{b,s},\mathcal{B}_\Gamma, \mu)$. 

Next I will argue that $\mu$ and $\nu$ are related through the identity~(\ref{MuToNu}). To construct a  measure $\widetilde{\nu}$ on $D^{b,s}$ satisfying $\widetilde{\nu}(R)=\int_{\mu}\int_0^1 1_{R}\big(p(r)\big)\mu(dp)  $, I can follow the approach  used above for constructing $\nu$  by defining how  $\widetilde{\nu}$ acts on $V^{b,s}$ and the cylinder sets. In particular, if $\widetilde{\nu}$ agrees with $\nu$ on $V^{b,s}$ and the cylinder sets, then they must be equal.   The vertex set $V^{b,s}$ has $\widetilde{\nu}$-measure zero because a path $p:[0,1]\rightarrow D^{b,s}$ only passes  through vertex points at the countable collection of times $r\in [0,1]$ that are integer multiples of $s^{-n}$ for some $n\in \mathbb{N}$, and thus $\int_0^1 1_{V^{b,s}}\big(p(r)\big)=0$ for all $p\in \Gamma^{b,s}$.  A cylinder set  $R=C_{(b_1,s_1)\times \cdots \times (b_n,s_n)}$ has   $\widetilde{\nu}$-measure $(bs)^{-n}$ since a path chosen uniformly at random will pass through $R$ with probability $b^{-n}$ and, in that event, it will spend a duration of $s^{-n}$ in $R$.   Therefore $\widetilde{\nu}=\nu$.

\vspace{.5cm}

\noindent \textbf{Uniform measure on paths through a finite subset of $\mathbf{E^{b,s}}$:} Let $S\subset E^{b,s}$ be  finite and $\Gamma^{b,s}_S$ be nonempty.  There exists an $N\in \mathbb{N}$ such that no two elements in $S$ fall into the same equivalence class $\mathbf{e}\in E^{b,s}_N$.  For $n>N$, 
$$ \frac{d\mu^{(n)}_S  }{ d\mu  }\,=\,J_S^{(n)}\hspace{.5cm}\text{where}\hspace{.5cm}   J_S^{(n)}(p)\,:=\,  b^{n|S| -\gamma(S) }\chi\Big([p]_n\cap \Gamma^{b,s}_S \neq \emptyset   \Big)   \,.  $$
Moreover, $J_S^{(n)}$ forms a nonnegative, mean-one martingale with respect to the filtration $\textup{F}_n=\Gamma^{b,s}_n   $.  If $g:\Gamma^{b,s}\rightarrow \R$ is measurable with respect to $\textup{F}_m$ for some $m\in \mathbb{N}$, then the sequence $\int_{\Gamma^{b,s}  } g(p) \mu^{(n)}(dp) $ is constant for $n\geq m$ and thus convergent.    A continuous function  $h:\Gamma^{b,s}\rightarrow \R$ must be uniformly continuous  since $\Gamma^{b,s}$ is a compact metric space.  Thus, given $\epsilon>0$, there exists an $n\in \mathbb{N}$ such that $|g(p)-g(q)|<\epsilon $ when $[p]_n=[q]_n$.  It follows that $Y_n g= \int_{\Gamma^{b,s}  } g(p) \mu^{(n)}(dp)    $ converges to a limit $Y_\infty g$ as $n\rightarrow \infty$.  Therefore, $\mu^{(n)}_S$ converges vaguely to a limiting probability measure $\mu_S$ on $\Gamma^{b,s}$.

\section{Random shift}

The following argument from~\cite{Shamov} shows that $\beta Y$ defines a random shift. 

\begin{proof}[Proof of Theorem~\ref{ThmRandomShift}] Let $\widetilde{\mathbb{E}}_\beta$ refer to the expectation with respect to $\widetilde{\mathbb{P}}_{\beta}$. The calculation below shows that   $\widetilde{\mathbb{E}}_{\beta}\big[  e^{\langle \mathbf{W},\psi \rangle} \big]=\mathbb{E}_{\beta}\big[ M_\beta(\mathbf{W},\Gamma^{b,s}) e^{\langle \mathbf{W},\psi \rangle} \big]$ for any $\psi \in \mathcal{H}$, and thus that  $ M_\beta(\mathbf{W},\Gamma^{b,s})$  is the Radon-Nikodym derivative of $\widetilde{\mathbb{P}}_{\beta}$ with respect to $\mathbb{P}$.
\begin{align*}
\widetilde{\mathbb{E}}_{\beta}\big[  e^{\langle \mathbf{W},\psi \rangle} \big]\,:=&\, \int_{\Gamma^{b,s}}\mathbb{E}\big[  e^{\langle \mathbf{W}+\beta Y_p,\psi \rangle} \big]\mu(dp) \\ \,=&\, \mathbb{E}\big[  e^{\langle \mathbf{W},\psi \rangle} \big]\int_{\Gamma^{b,s}} e^{\beta \langle  Y_p,\psi \rangle} \mu(dp) \,
\intertext{Since $Y^{(n)}$ converges strongly to $Y$ by part (i) of Proposition~\ref{PropYs}, the above is equal to }
 \,=&\,   e^{\frac{1}{2} \| \psi\|_{\mathcal{H}}^2} \lim_{n\rightarrow \infty} \int_{\Gamma^{b,s}} e^{\beta \langle  Y_p^{(n)},\psi \rangle} \mu(dp)
\\ \,=&\,    \lim_{n\rightarrow \infty} \int_{\Gamma^{b,s}} e^{ \frac{1}{2}\| \psi+ \beta Y_p^{(n)}\|^2_\mathcal{H} -\frac{\beta^2}{2}\| Y_p^{(n)}\|^2_\mathcal{H} } \mu(dp)\,.
\intertext{With $\mathbb{E}\big[ \exp\{ \langle \mathbf{W} ,\phi\rangle  \} \big]=\exp\big\{\frac{1}{2}\|\phi\|_2^2   \big\}    $, I have the equality    }  
    \,=&\, \lim_{n\rightarrow \infty}\int_{\Gamma^{b,s}}\mathbb{E}\big[  e^{\langle \mathbf{W},\psi +\beta Y_p^{(n)}\rangle} \big]e^{ -\frac{\beta^2}{2}\| Y_p^{(n)}\|^2_\mathcal{H} } \mu(dp)  \\
   \,=&\, \lim_{n\rightarrow \infty}\mathbb{E}\bigg[ \bigg(\int_{\Gamma^{b,s}}e^{ \beta\langle \mathbf{W} , Y_p^{(n)} \rangle -\frac{\beta^2}{2}\| Y_p^{(n)}   \|^2_\mathcal{H}  } \mu(dp) \bigg) e^{\langle \mathbf{W},\psi \rangle} \bigg]\,=\, \lim_{n\rightarrow \infty}\mathbb{E}\Big[ M^{(n)}_\beta\big(\mathbf{W},\Gamma^{b,s}\big)  e^{\langle \mathbf{W},\psi \rangle} \Big]\,\,.
\intertext{Finally, the limit can be brought inside the expectation as a consequence of part (ii) of Proposition~\ref{PropYField}   } 
   \,=&\,\mathbb{E}\Big[ M_\beta\big(\mathbf{W},\Gamma^{b,s}\big)  e^{\langle \mathbf{W},\psi \rangle} \Big]\,.
\end{align*}

\end{proof}

\end{appendix}

\end{document}